\theoremstyle{plain}
\newtheorem{thm}{Theorem}[section]
\newtheorem{rmk}[thm]{Remark}
\newtheorem{prop}[thm]{Proposition}
\newtheorem{cor}[thm]{Corollary}
\newtheorem{lem}[thm]{Lemma}
\newtheorem{conj}[thm]{Conjecture}
\newtheorem*{thm*}{Theorem}
\newtheorem*{prop*}{Proposition}
\newtheorem*{cor*}{Corollary}
\theoremstyle{definition}
\newtheorem{defn}[thm]{Definition}
\newtheorem{exm}[thm]{Example}
\newtheorem{ex}[thm]{Example}
\newtheorem{ques}[thm]{Question}
\newcommand{\rr}{\mathbb{R}}
\newcommand{\cc}{\mathbb{C}}
\newcommand{\kk}{\mathbb{K}}
\newcommand{\bfx}{\mathbf{x}}
\newcommand{\bfy}{\mathbf{y}}
\DeclareMathOperator{\esd}{esd}
\DeclareMathOperator{\rank}{rank}
\DeclareMathOperator{\In}{In}
\DeclareMathOperator{\diag}{diag}
\DeclareMathOperator{\gcr}{gcr}
\DeclareMathOperator{\sign}{sign}
\newcommand{\kl}[1]{{\color{purple} KL: #1}}
\title{Typical ranks in symmetric matrix completion}
\author{Daniel Irving Bernstein}
\address{Institute for Data, Systems, and Society, Massachusetts Institute of Technology, Cambridge, MA 02139}
\email{dibernst@mit.edu}\urladdr{https://dibernstein.github.io}
\author{Grigoriy Blekherman}
\address{School of Mathematics, Georgia Institute of Technology, Atlanta, GA 30332}
\email{greg@math.gatech.edu}\urladdr{https://sites.google.com/site/grrigg/}
\author{Kisun Lee}
\address{School of Mathematics, Georgia Institute of Technology, Atlanta, GA 30332}
\email{klee669@gatech.edu}\urladdr{https://people.math.gatech.edu/~klee669/}
\begin{document}

\begin{abstract} We study the problem of low-rank matrix completion for symmetric matrices.
	The minimum rank of a completion of a generic partially specified symmetric matrix
	depends only on the location of the specified entries,
	and not their values, if complex entries are allowed.
	When the entries are required to be real,
	this is no longer the case and the possible minimum ranks are called \emph{typical ranks}.
	We give a combinatorial description of the patterns of specified entries of $n\times n$ symmetric matrices
	that have $n$ as a typical rank.
	Moreover, we describe exactly when such a generic partial matrix is minimally completable to rank $n$.
	We also characterize the typical ranks for patterns of entries with low maximal typical rank.
\end{abstract}

\maketitle


\section{Introduction}
This paper is concerned with the \textit{symmetric low-rank matrix completion problem}.
We begin with an illustrative example.
Suppose that we have a partially specified symmetric matrix $\begin{pmatrix}a&*\\ *&b \end{pmatrix}$, where $a$ and $b$ are given, and our objective is to find the unknown entry $*$, so that the full matrix has minimal rank. Unless $a$ and $b$ are both zero, any completion will have rank at least $1$, and if we are allowed complex entries, then can always complete to rank $1$ by setting $*=\sqrt{ab}$. This situation is quite general: if we fix a pattern of known and unknown entries and the entries are complex numbers, then outside of a low-dimensional subset in the space of partial fillings (the point $(0,0)$ in the $(a,b)$-space in our example), any partial filled matrix can be completed to the same minimal rank, called the \textit{generic completion rank} of the pattern. We note that in general the exceptional low-dimensional set will contain matrices that are minimally completable to ranks that are both higher and lower than the generic completion rank.

If we consider our example when entries are restricted to be real numbers, then the situation is more complicated.
If $ab>0$,
then we can still complete to rank one,
but if $ab <0$, then we can only complete to rank two.
Notice that the set of matrices that are completable to rank two forms a full-dimensional subset of the $(a,b)$-space of partial fillings. This brings us to a crucial definition: given a fixed pattern of known and unknown entries,
a rank $r$ is called \emph{typical} if the set of all matrices with real entries minimally completable to rank $r$ forms a full-dimensional subset of the vector space of partial fillings.
As we see in the above example, a given pattern can have more than one typical rank.

It is known that the generic completion rank of a pattern is equal to its lowest typical rank,
and all ranks between the maximal typical rank and the minimal typical rank are also typical
\cite{bernstein2018typical, Bernardi2018}.
We say that a pattern of known entries of an $n\times n$ partial matrix is \emph{full-rank typical} if $n$ is a typical rank.
The question of characterizing full-rank typical patterns was raised in \cite{bernstein2018typical}.
One of our main results, Theorem \ref{thm:fullranktypicalCharacterization}.
is a simple characterization of the full-rank typical patterns.
We provide a semialgebraic description of the set of generic partial matrices
that can only be completed to full rank, in the case that the pattern of known entries is full-rank typical
(Theorem \ref{thm:whenFullRank}),
and for one particular family of patterns, we give a semialgebraic description
of the open regions corresponding to each typical rank (Theorem \ref{thm:esdArgument-disjointcliques}).
We also characterize the typical rank behavior of patterns with generic completion rank one (Theorem \ref{thm:typicalrank2Graphs}),
and of patterns with generic completion rank two such that all diagonal entries are known (Theorem \ref{thm:loopedTrees}).

Generic completion rank for symmetric matrices has applications in statistics as a bound for the \textit{maximum likelihood threshold} of a Gaussian graphical model and in \textit{factor analysis} \cite{uhler2012geometry, GSMR3706762,MR3903791, shapiro1982rank}. If we restrict to \textit{positive semidefinite completions}, then maximal typical rank of a pattern (suitably defined) is known as the \textit{Gram dimension}, and is closely related to Euclidean distance realization problems \cite{MR3207690,MR3201103}. We note that for the positive semidefinite matrix completion, there are no partial matrices that can be completed only to full rank as any entry of a positive definite matrix may be changed to make the matrix positive semidefinite and drop rank. There is also a similarity to the investigation of generic and typical ranks for tensors and symmetric tensors \cite{Bernardi2018,BTMR3368091,MR3633774,MR2865915}. We now state and discuss our main results in detail.

\subsection{Main Results in Detail}
Matrices and partial matrices will be assumed to have entries in a field $\kk$,
which will always be $\rr$ or $\cc$.
Let $\mathcal{S}^n(\kk)$ denote the set of $n\times n$ symmetric matrices with entries in $\kk$.
To a pattern of known entries, we associate a semisimple graph $G=([n],E)$
(i.e.~loops are allowed, but no multiple edges),
where the edges of $G$ correspond to the known entries
and non-edges of $G$ correspond to the unknown entries (See Figure \ref{fig:partialmatrix}).
Associated to each semisimple graph $G$ is the set of \emph{G-partial matrices},
which are elements of $\kk^{E}$.
It is often helpful to think of a completion of a $G$-partial matrix $M$ as a function
\[
	M: \kk^{[n]\sqcup \binom{[n]}{2}\setminus E} \rightarrow \mathcal{S}^n(\kk)
\]
that simply plugs in a set of values for the missing entries.
Thus given a partial matrix $M$,
we let $M(\bfx)$ denote the matrix obtained by plugging in $\bfx$ for the missing entries of $M$.
 	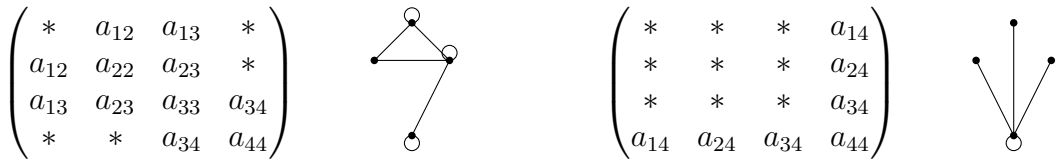
\begin{figure}[H]
	\begin{tikzpicture}[roundnode/.style={circle, fill=black, inner sep=0pt, minimum size=1mm}]
	
	
	\node[roundnode](A) at (5,1.6) {};
	\node[roundnode](B) at (5.5,2.1) {};
	\node[roundnode](A) at (6,1.6) {};
	\draw (5.5,2.2) circle [thick,radius = .1];
	\draw (6,1.7) circle [thick,radius = .1];
	
	\node[roundnode](B) at (5.5,0.6) {};
	\draw (5.5,0.5) circle [thick,radius = .1];
	
	\draw (5,1.6) -- (5.5, 2.1) {};
	\draw (6,1.6) -- (5.5, 2.1) {};
	\draw (6,1.6) -- (5, 1.6) {};
	\draw (5.5,.6) -- (6, 1.6) {};

    \node at (2,1.3) {$\begin{pmatrix}
    	* & a_{12} & a_{13} &*\\
		a_{12} & a_{22} & a_{23} &*\\
		a_{13} & a_{23} & a_{33} & a_{34}\\
		* & * & a_{34} & a_{44}
    	\end{pmatrix}$};

	 	 2nd graph
	
	 	 4 vertices in the middle
	 	\node[roundnode](A) at (13,1.6) {};
	 	\node[roundnode](A) at (13.5,2.1) {};
	 	\node[roundnode](B) at (14,1.6) {};
	\draw (13.5,0.5) circle [thick,radius = .1];
	
	 	 2 vertices in the end sides
	 	\node[roundnode](B) at (13.5,0.6) {};

	 	 edges from the vertex above
	 	\draw (13.5,0.6) -- (13, 1.6) {};
	 	\draw (13.5,0.6) -- (13.5, 2.1) {};
	 	\draw (13.5,0.6) -- (14, 1.6) {};
    \node at (10,1.3) {$\begin{pmatrix}
	* & * & * & a_{14}\\
	* & * & * & a_{24}\\
	* & * & * & a_{34}\\
	a_{14} & a_{24} & a_{34} & a_{44}
	\end{pmatrix}$};

	\end{tikzpicture}
	\caption{Partial matrices alongside their corresponding graphs.}
	\label{fig:partialmatrix}
\end{figure}

We will use the term \textit{generic matrix} to mean any matrix that lies outside of an (often unspecified) algebraic subset of the space of all matrices. For instance, we can say that a generic matrix is invertible, since non-invertible matrices lie inside the determinant hypersurface. Similarly, a generic square matrix has distinct eigenvalues, since discriminant of the characteristic polynomial catches all matrices with repeated eigenvalues. Specifying the algebraic subset explicitly is often omitted.

For any graph $G$ there exists an integer $r$ such that
for any generic $G$-partial matrix $M$ with complex entries,
there exists a complex $\bfx$ such that $M(\bfx)$ has rank $r$, and $M$ cannot be completed to a rank below $r$ \cite[Proposition 6.1(1)]{bernstein2018typical}.
This $r$ is called the \emph{generic completion rank of $G$} and we denote it $\gcr(G)$.
If we insist that $\bfx$ be real, then
we lose the existence of generic completion rank
and instead get \emph{typical ranks}.
More precisely,
a typical rank of a graph $G$
is an integer $r$ such that there exists an open set $U \subset \rr^E$ 
in the Euclidean topology of real $G$-partial matrices
such that any $M \in U$ is completable to rank $r$, and cannot be completed to a rank below $r$. In this case we
say that $M$ is \emph{minimally completable to rank $r$}.

\begin{prop*}[c.f. {\cite[Proposition 6.1]{bernstein2018typical}}] \label{prop:typicalrankProperties}
    Let $G = ([n],E)$ be a semisimple graph.
    The minimum typical rank of $G$ is $\gcr(G)$
    and all integers between $\gcr(G)$ and the maximum typical rank of $G$ are typical ranks of $G$. In addition, the maximum typical rank of $G$ is at most $2 \cdot \gcr(G)$.
\end{prop*}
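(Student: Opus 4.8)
The plan is to establish the three assertions separately, in increasing order of difficulty. For $\kk\in\{\rr,\cc\}$ and an integer $k$, let $A_k^{\kk}\subseteq\kk^{E}$ be the set of $G$-partial matrices admitting a completion over $\kk$ of rank $\le k$; this is constructible over $\cc$ (Chevalley) and semialgebraic over $\rr$ (Tarski--Seidenberg), and since the relevant data (the $(k{+}1)\times(k{+}1)$ minors and the coordinate projection forgetting the unspecified entries) are defined over $\qq$, the statement ``$A_k^{\cc}$ lies in a proper Zariski-closed set'' forces ``$A_k^{\rr}$ lies in a proper real algebraic set'', which therefore has empty interior. By definition of $\gcr(G)$ the set $A_{\gcr(G)}^{\cc}$ is Zariski dense while $A_{\gcr(G)-1}^{\cc}$ is not; hence $A_s^{\rr}$ has empty interior for every $s<\gcr(G)$, so no integer below $\gcr(G)$ is a typical rank. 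Write $r:=\gcr(G)$ from now on.

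For the bound ``$\le 2r$'': a generic $M\in\rr^{E}$ avoids the proper real algebraic set where $A_r^{\cc}$ fails, so it has a complex completion $M(\bfz)$ with $\rank M(\bfz)\le r$. Writing $\bfz=\bfx+i\bfy$ with $\bfx,\bfy$ real and using that the completion map is affine-linear with real coefficients, the real matrix $M(\bfx)=\mathrm{Re}\,M(\bfz)=\tfrac12\bigl(M(\bfz)+\overline{M(\bfz)}\bigr)$ is itself a completion of $M$, and $\rank M(\bfx)\le\rank M(\bfz)+\rank\overline{M(\bfz)}\le 2r$. Thus a dense open set of real $M$ is completable to rank $\le 2r$, so no integer exceeding $2r$ can be typical; this bounds the maximum typical rank by $2r$.

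Next, $r$ is itself typical, so the minimum typical rank equals $r$. Let $\calm\subseteq\mathcal{S}^n(\rr)$ be the smooth manifold of real symmetric matrices of rank exactly $r$, and $\pi\colon\mathcal{S}^n(\rr)\to\rr^{E}$ the projection onto the specified entries. The complex analogue of $\pi|_{\calm}$, the map from rank-exactly-$r$ complex symmetric matrices to $\cc^{E}$, is dominant (a generic complex $G$-partial matrix is completable to rank $\le r$ but not $\le r-1$, hence to rank exactly $r$); its source is irreducible (a $\GL_n(\cc)$-orbit) and smooth, its target is smooth, so by generic smoothness in characteristic $0$ it is a submersion at a generic point. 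The non-submersion locus is Zariski-closed over $\qq$, hence proper in $\calm$ too, so a generic $N\in\calm$ is a submersion point of $\pi|_{\calm}$. By the submersion theorem $\pi$ carries a neighbourhood of $N$ onto an open $U\subseteq\rr^{E}$ of partial matrices completable to rank $\le r$; deleting from $U$ the nowhere-dense closure of $A_{r-1}^{\rr}$ leaves an open set minimally completable to rank exactly $r$.

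Finally, every integer in $[r,r_{\max}]$ is typical, where $r_{\max}$ is the maximum typical rank. For typical $k$ put $\Omega_k:=\mathrm{int}\{M\in\rr^{E}:M\text{ is minimally completable to rank }k\}$; these nonempty open sets are pairwise disjoint and their union is dense, since its complement is a finite union of sets $S\setminus\mathrm{int}(S)$, each of empty interior. Take a generic segment $\gamma$ from a point of $\Omega_r$ to a point of $\Omega_{r_{\max}}$: being one-dimensional it misses every codimension-$\ge 2$ semialgebraic set and crosses each codimension-one wall between two chambers transversally at finitely many of its generic points, so the minimal-completion-rank along $\gamma$ is a step function, equal to $r$ near one end, to $r_{\max}$ near the other, and constant (at the relevant chamber rank) between consecutive wall-crossings. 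The crucial lemma is that across a generic wall between $\Omega_a$ and $\Omega_b$ with $a<b$ one has $b=a+1$: lower semicontinuity of rank (with a curve-selection argument, projectivizing to control completions that escape to infinity) shows a generic wall point is completable to rank $\le a$, and the rank-$\le(a{+}1)$ locus meets the nearby fibres of $\pi$ transversally near such a completion, so matrices just across the wall are completable to rank $\le a+1$. Granting this, the discrete intermediate value theorem forces every integer between $r$ and $r_{\max}$ to appear as a chamber rank, i.e.\ to be typical. I expect this wall-crossing lemma to be the main obstacle: showing that the minimal rank drops by \emph{exactly} one across a generic wall requires a careful local transversality analysis of how minimal completions deform, together with enough control of the semialgebraic geometry of the sets $A_k^{\rr}$ (for instance that they are pure of full dimension for $k\ge r$, so that distinct chambers are separated only by honest codimension-one walls).
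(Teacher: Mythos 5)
Note first that the paper itself offers no proof of this proposition: it is quoted as background from \cite[Proposition 6.1]{bernstein2018typical} (see also \cite{Bernardi2018}), so you are reconstructing a result the paper only cites. Your treatment of two of the three claims is essentially correct and matches the standard arguments: no rank below $\gcr(G)$ is typical because the corresponding real completable sets sit inside proper real algebraic subsets; $\gcr(G)$ itself is typical by dominance of the projection from the rank-$\gcr(G)$ determinantal variety together with generic smoothness and the submersion theorem; and the bound by $2\gcr(G)$ follows by taking the real part $\tfrac12\bigl(M(\bfz)+\overline{M(\bfz)}\bigr)$ of a complex rank-$\gcr(G)$ completion, exactly as in the cited source.

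The genuine gap is the interval property (all integers between $\gcr(G)$ and the maximum typical rank are typical). Your argument reduces it to a wall-crossing lemma --- that the minimal completion rank changes by exactly one across a generic codimension-one wall --- which you do not prove and yourself identify as the main obstacle; but that lemma is precisely the nontrivial content of this part of the statement. The two ingredients you sketch for it are both problematic. Lower semicontinuity of the minimal completion rank at a wall point is not automatic: the set of partial matrices completable over $\rr$ to rank at most $a$ need not be closed, since minimal-rank completions can diverge as the wall is approached, and the parenthetical appeal to projectivization does not explain how a completion at infinity is converted into an honest finite completion of the wall point. The transversality claim --- that the rank-$\le(a+1)$ locus meets nearby fibres of the projection transversally near such a completion --- cannot be extracted from generic smoothness, because the wall point is by construction non-generic in $\rr^{E}$, and no argument is given that the relevant completion is a suitable smooth point with the required transversal intersection. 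You also use, without proof, that the completable sets for ranks $\ge \gcr(G)$ are of full local dimension, so that distinct chambers are separated only by codimension-one walls. As it stands, the proposal establishes the first and third assertions of the proposition but not the second, so it does not prove the full statement.
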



Denote the $n$-clique with a loop at every vertex by $K_n^\circ$.
The \emph{complement} of a semisimple graph $G = ([n],E)$, denoted $G^c$,
is the graph obtained by removing the edges in $E$ from $K_n^\circ$.
A semisimple graph with $n$ vertices is called \textit{full-rank typical} if $n$ is a typical rank.
Our first main result characterizes the full-rank typical graphs, thus solving a problem posed in \cite{bernstein2018typical}. Note that a bipartite semisimple graph cannot have loops.

\begin{thm*}[Theorem \ref{thm:fullranktypicalCharacterization}]
		A graph $G$ is full-rank typical if and only if its complement $G^c$ is bipartite.
\end{thm*}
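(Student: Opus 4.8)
The plan is to establish the two directions separately, using the structure of the completion as a quadratic/bilinear parametrization problem. First consider the "if" direction: suppose $G^c$ is bipartite, say with color classes $A \sqcup B = [n]$. A $G$-partial matrix has specified entries exactly on $E$, and the missing entries form the edge set of $G^c$, which now only joins $A$ to $B$ (there are no missing diagonal entries and no missing entries within $A$ or within $B$). I would argue that for a generic $G$-partial matrix, any completion $M(\bfx)$ has rank $n$. The key observation: reorder rows/columns so the matrix is block-structured according to $A \sqcup B$; the diagonal blocks $M_{AA}$ and $M_{BB}$ are fully specified and generic (hence invertible and generic symmetric matrices), and only the off-diagonal block $M_{AB}$ contains the unknowns. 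One computes the rank via the Schur complement $M_{BB} - M_{BA} M_{AA}^{-1} M_{AB}$. The goal is to show that for a generic choice of the specified entries, this Schur complement is nonsingular for every real choice of $\bfx$ — equivalently, the determinant of $M(\bfx)$, as a polynomial in $\bfx$, has no real zero. I expect this to follow from a positivity/sign argument: choose the specified diagonal blocks to be (say) positive definite and negative definite respectively, or use an inertia argument via Sylvester's law, so that the Schur complement is forced to stay definite (or at least nonsingular) regardless of the real values plugged in for $\bfx$. Since this holds on an open set of choices of the specified entries, $n$ is a typical rank.

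For the "only if" direction: suppose $G^c$ is not bipartite, so $G^c$ contains an odd closed walk — either an odd cycle among the off-diagonal non-edges, or a loop (a missing diagonal entry), which is an odd closed walk of length one. I want to show that $\gcr(G) \le n-1$ fails to be the whole story only if we can always drop below $n$; more precisely I must show $n$ is not typical, i.e. that the set of $G$-partial matrices minimally completable to rank $n$ is not full-dimensional. Equivalently, on a dense open set of $G$-partial matrices $M$, there is a real $\bfx$ with $\det M(\bfx) = 0$. The strategy is to exhibit, for generic $M$, an explicit real completion of rank $<n$: starting from any real completion $M(\bfx_0)$, use the freedom in the missing entries along the odd closed walk of $G^c$ to perform a continuous deformation that changes the sign of $\det M(\bfx)$, whence by the intermediate value theorem some real $\bfx$ gives $\det M(\bfx)=0$. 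If $G^c$ has a loop at vertex $i$, the diagonal entry $x_{ii}$ is free, and $\det M(\bfx)$ is a degree-$\le 1$ polynomial in $x_{ii}$ with generically nonzero leading coefficient (the $(i,i)$ cofactor, a generic minor, is nonzero), so it has a real root. If $G^c$ has an odd cycle $C = (i_1 i_2 \cdots i_{2k+1})$ on the off-diagonal, I would scale the corresponding unknowns $x_{i_1 i_2}, x_{i_2 i_3}, \dots$ by a common real parameter $t$ and track how $\det M(\bfx)$ depends on $t$; the odd length should force a monomial that is an odd power of $t$ (coming from the single permutation cycle through all of $C$), making $\det M$ a non-constant polynomial in $t$ that must vanish for some real $t$. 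One then checks the completed rank is exactly $n-1$ (not lower) generically, so $M$ is minimally completable to $n-1$, and by the Proposition this means $n$ cannot be typical.

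The main obstacle will be the "only if" direction, specifically making the odd-cycle deformation argument rigorous: I need to verify that for generic specified entries the relevant coefficient (the odd-power-of-$t$ term, or the appropriate cofactor) is genuinely nonzero, so that $\det M(\bfx)$ really does change sign. This requires a careful combinatorial analysis of which monomials in the unknowns survive in the determinant expansion, exploiting that an odd cycle in $G^c$ contributes a permutation (a single $(2k+1)$-cycle) whose sign and monomial cannot be cancelled by other terms. A secondary technical point is confirming that the rank drops only to $n-1$ and not further — i.e. that some $(n-1)\times(n-1)$ minor stays nonzero at the vanishing point — which again should hold generically since that minor is a nonconstant polynomial not identically tied to $\det M$. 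For the "if" direction, the potential subtlety is ensuring the inertia argument is robust: I must pick the signature of the specified diagonal blocks so that the Schur complement's definiteness is preserved for all real $\bfx$, and confirm that such patterns of specified entries form a Euclidean-open set, which they do since definiteness and invertibility are open conditions.
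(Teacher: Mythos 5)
Your proposal is correct and takes essentially the same route as the paper: for bipartite $G^c$ the paper also relies on a positive-definite/negative-definite pair of fully specified blocks forcing every completion to be nonsingular (via Proposition \ref{prop:KnKmfullrank} for $K_m^\circ\sqcup K_n^\circ$ together with the edge-addition remark, using a kernel argument equivalent to your Schur-complement computation), and for non-bipartite $G^c$ it also exploits that an odd cycle of unknowns makes $\det M(\bfx)$ a polynomial of odd degree and hence gives a real zero for generic $M$ (the paper first reduces, by the edge-addition remark, to $G^c$ being an odd cycle plus isolated vertices, which makes the leading-coefficient check you flag immediate; in your direct version it also goes through because an odd cycle has no perfect matching, so only the two rotations of the cycle contribute the top-degree monomial and they cannot cancel). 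One cosmetic repair: at the end of the non-bipartite direction you need neither the claim that the singular completion has rank exactly $n-1$ nor the interval property of typical ranks --- once a generic $G$-partial matrix admits a real completion with zero determinant, the set of partial matrices minimally completable to rank $n$ lies in a proper algebraic subset and so is not full-dimensional, which is precisely the statement that $n$ is not typical.
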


For any full-rank typical graph $G$,
Theorem  \ref{thm:whenFullRank} describes the set of generic partial matrices that are minimally completable to full rank.

Given graphs $G$ and $H$, we let $G \sqcup H$ denote the disjoint union of $G$ and $H$.
For the full-rank typical graphs $G=K_n^\circ\sqcup K_m^\circ$, we describe how to calculate the minimal completion rank of a generic $G$-partial matrix.
To state this theorem, we need the following definition.

\begin{defn}
For a real full-rank symmetric matrix $A$,
	let $p_A$ and $n_A$ denote the number of positive and negative eigenvalues of $A$ respectively.
	Given two real full-rank symmetric matrices $A$ and $B$ potentially of different sizes, 
	we define \emph{eigenvalue sign disagreement} between $A$ and $B$, denoted $\esd(A,B)$, as follows
	\[	
		\text{esd}(A,B)
		:=\left\{\begin{array}{ll}
		0 & \text{if }(p_A-p_B)(n_A-n_B)\geq 0\\
		\min\{|p_A-p_B|,|n_A-n_B|\} & \text{otherwise}
		\end{array}\right.
	\]
\end{defn}


\begin{thm*}[Theorem \ref{thm:esdArgument-disjointcliques}]
	Let $m,n \ge 0$ be integers
	and $G = K_m^\circ \sqcup K_n^\circ$.
	Let $A$ be a full-rank $m\times m$ symmetric matrix and $B$ be a full-rank $n\times n$ symmetric matrix,
	and consider the following $G$-partial matrix
	\[
	M=\begin{pmatrix}
	A & *\\
	* &B
	\end{pmatrix}.
	\]
	Then, $M$ is minimally completable to rank $\max\{n,m\}+\esd(A,B)$.
\end{thm*}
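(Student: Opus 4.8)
The plan is a Schur-complement reduction to a pure signature (inertia) question. Since the statement is symmetric in $A$ and $B$, assume $m \le n$, so $\max\{m,n\} = n$; as $B$ is full rank it is invertible. Every completion of $M$ is of the form $\begin{pmatrix} A & X \\ X^T & B\end{pmatrix}$ for some $X \in \rr^{m\times n}$, and the Schur-complement identity gives $\rank\begin{pmatrix} A & X \\ X^T & B\end{pmatrix} = n + \rank\bigl(A - X B^{-1} X^T\bigr)$. In particular every completion has rank at least $n$, and the minimal completion rank of $M$ equals $n + \rho$, where $\rho := \min_{X \in \rr^{m\times n}} \rank\bigl(A - X B^{-1} X^T\bigr)$. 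So it suffices to prove $\rho = \esd(A,B)$.

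Next I would eliminate $B^{-1}$ by congruence: writing $B^{-1} = T\,\diag(I_{p_B},-I_{n_B})\,T^T$ with $T$ invertible (legitimate because $B^{-1}$ has the same inertia as $B$), substituting $Y = XT$, which ranges over all of $\rr^{m\times n}$, and then splitting $Y$ into its first $p_B$ and its last $n_B$ columns, one obtains $X B^{-1} X^T = P - N$ where $P$ ranges over all positive semidefinite $m\times m$ matrices of rank at most $p_B$, $N$ over all positive semidefinite $m\times m$ matrices of rank at most $n_B$, and the two choices are independent. Hence $\rho = \min\{\rank(A - P + N) : P \succeq 0,\ \rank P \le p_B,\ N\succeq 0,\ \rank N \le n_B\}$.

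For the upper bound $\rho \le \esd(A,B)$, diagonalize $A$ by congruence as $A = R\,\diag(I_{p_A},-I_{n_A})\,R^T$ with $R$ invertible, and group the columns of $R$ into its $p_A$ ``positive'' and $n_A$ ``negative'' columns. A short case check using $m \le n$ (and $p_A+n_A=m$, $p_B+n_B=n$) shows that $\esd(A,B)=0$ forces $p_A \le p_B$ and $n_A \le n_B$; taking $P$ to be the Gram matrix of all positive columns (padded with zero columns to total width $p_B$) and $N$ that of all negative columns (padded to width $n_B$) then gives $A - P + N = 0$. If $\esd(A,B) = s > 0$, then after possibly replacing $(A,B)$ by $(-A,-B)$ we may assume $p_A > p_B$; since $m \le n$ this forces $n_A < n_B$ and $s = p_A - p_B$, and using $p_B$ of the positive columns for $P$ and all $n_A$ negative columns (padded) for $N$ leaves $A - P + N$ equal to the Gram matrix of the $s$ remaining positive columns, a matrix of rank exactly $s$.

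The heart of the proof is the matching lower bound $\rank(A - P + N) \ge \esd(A,B)$ for all admissible $P,N$, which is only nontrivial when $\esd(A,B) = s > 0$. Say $p_A > p_B$ with $s = p_A - p_B$ (the case $n_A > n_B$ is symmetric, or follows by the $(-A,-B)$ substitution), and set $C := A - P + N$. On the subspace $\ker P \cap \ker C$, which has codimension at most $\rank P + \rank C \le p_B + \rank C$, we have $A = (P - N) + C = -N \preceq 0$; since a symmetric matrix that is negative semidefinite on a subspace of codimension $d$ has positive index at most $d$, this gives $p_A \le p_B + \rank C$, i.e.\ $\rank C \ge p_A - p_B = s$. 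Therefore $\rho = \esd(A,B)$ and the minimal completion rank is $n + \esd(A,B) = \max\{m,n\} + \esd(A,B)$. The only steps requiring any care are the decoupling of $P$ and $N$ in the congruence step and the small case analysis relating $\esd(A,B)=0$ to $p_A \le p_B$ and $n_A \le n_B$ under $m \le n$; the inertia estimates themselves are routine, so I do not anticipate a serious obstacle beyond bookkeeping.
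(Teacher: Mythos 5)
Your proof is correct, but it takes a genuinely different route from the paper, most notably in the lower bound. The paper diagonalizes $A$ and $B$ orthogonally and proves the lower bound by extracting a principal \emph{partial} submatrix whose known blocks are the negative-definite part of one clique and the positive-definite part of the other, then invoking Proposition \ref{prop:KnKmfullrank} (every completion of a definite/anti-definite pattern is full rank) to force rank at least $\max\{m,n\}+\esd(A,B)$; you instead reduce everything via the Schur complement to the invariant statement $\rho=\min_X\rank\bigl(A-XB^{-1}X^T\bigr)=\min\{\rank(A-P+N)\}$ over positive semidefinite $P,N$ of ranks at most $p_B,n_B$, and prove the lower bound by an inertia/subspace-intersection estimate (on $\ker P\cap\ker C$ the form $A$ is negative semidefinite, so $p_A\le p_B+\rank C$), which is a clean Weyl-type argument not appearing in the paper. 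Your upper bound is in the same spirit as the paper's (an explicit completion making the Schur complement have rank exactly $\esd$), though you build it from Gram matrices of a congruence diagonalization of $A$ rather than the paper's choice $x_{kk}=\sqrt{b_k/a_k}$ after orthogonal diagonalization; your small case analysis ($m\le n$ forcing $p_A\le p_B$, $n_A\le n_B$ when $\esd=0$, and $s=p_A-p_B$ when $\esd>0$ with $p_A>p_B$) checks out, as does the decoupling of $P$ and $N$. What the two approaches buy: the paper's proof reuses its earlier proposition and stays inside matrix-completion language (the submatrix trick recurs elsewhere in the paper), while your reformulation is self-contained, makes the inertia-theoretic content of $\esd$ explicit, and isolates a statement ($\rho=\esd(A,B)$) of independent interest.
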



Our remaining results concern graphs with low typical ranks.
Semisimple graphs with generic completion rank $1$ were characterized in \cite{singer2010uniqueness}.
We characterize their typical ranks.

\begin{thm*}[Theorem \ref{thm:typicalrank2Graphs}]
	Let $G$ be a semisimple graph with generic completion rank $1$.
	The maximum typical rank of $G$ is $2$ if $G$ has at least two cycles, and $1$ otherwise.
\end{thm*}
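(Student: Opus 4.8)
The plan is to combine the known structure of generic-completion-rank-$1$ graphs with a careful accounting of signs. Recall from \cite{singer2010uniqueness} that $\gcr(G)=1$ precisely when every connected component of $G$ is either a tree, a tree with one extra edge added so as to create an \emph{odd} cycle, or a tree with one loop added; this also follows directly by computing the rank of the exponent matrix of the monomial map $v\mapsto (v_iv_j)_{ij\in E}$. In particular a $\gcr$-$1$ component contains at most one cycle, and that cycle, when present, is an odd cycle or a loop, so ``$G$ has at least two cycles'' means exactly that at least two components of $G$ are not trees.

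The computational core is the following claim. Let $C$ be connected of one of the three types above and let $M$ be a generic $C$-partial matrix; then there are a nonzero $v$ and a sign $\varepsilon\in\{\pm1\}$ with $\varepsilon v_iv_j=a_{ij}$ for every specified entry $a_{ij}$, both signs occur when $C$ is a tree, and when $C$ has a cycle the sign is forced to be $\varepsilon_C(M):=\sign(a_{ww})$ for a loop at $w$, or $\sign\bigl(\prod_{ij\in\text{cycle}}a_{ij}\bigr)$ for an odd cycle. For a tree one roots it and propagates $v_j=\varepsilon a_{ij}/v_i$ from $v_{\text{root}}=1$; the loop case fixes $v_w=\sqrt{|a_{ww}|}$ and then propagates; the odd-cycle case, after setting $v_{i_1}$ to a free parameter $t$ and solving around the cycle, collapses to a single equation $t^2=\varepsilon\mu$ with $\mu$ a monomial in the $a_{ij}$ with exponents $\pm1$, solvable over $\RR$ exactly when $\varepsilon=\sign(\mu)=\varepsilon_C(M)$ because $\sign\bigl(\prod_l b_l^{\pm1}\bigr)=\prod_l\sign(b_l)$. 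I expect this odd-cycle sign analysis --- equivalently, the remark that negating all entries of an odd cycle flips the admissible $\varepsilon$ --- to be the only genuinely delicate point; the rest is bookkeeping.

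Given the claim, suppose first that $G$ has at most one cycle. Choose the common sign $\varepsilon$ to be $\varepsilon_{C_0}(M)$ for the unique non-tree component $C_0$, or $+1$ if $G$ is a forest, realize every component with this $\varepsilon$ via the claim, and concatenate the resulting vectors into a single $v\in\RR^n$ by padding with zeros; then $\varepsilon vv^\top$ is a rank-one completion of $M$, its unspecified cross-component entries being filled with $0$. Hence a generic $M$ completes to rank $1$, so no open set of $G$-partial matrices can have minimal completion rank $2$, and the maximum typical rank is $1$.

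Now suppose $G$ has two non-tree components $C_1,C_2$, and let $U$ be the nonempty open set of generic $M$ with $\varepsilon_{C_1}(M)=+1$ and $\varepsilon_{C_2}(M)=-1$. On $U$ there is no rank-one completion: restricting a hypothetical $\varepsilon vv^\top$ to the vertices of $C_1$ (on which $v$ cannot vanish, since the specified entries there are generically nonzero) forces $\varepsilon=+1$ by the claim, while restricting to $C_2$ forces $\varepsilon=-1$. On the other hand $M$ completes to rank $2$: realize each component $D$ by the claim with its forced sign ($+1$ for $C_1$ and for trees, $-1$ for $C_2$, and $\varepsilon_D(M)$ for any other non-tree component), let $p$ and $n$ collect, padded by zeros, the realizing vectors of the components with forced sign $+1$ and $-1$ respectively, and observe that $pp^\top-nn^\top$ agrees with $M$ on every specified entry (the same sign check as above) and has rank exactly $2$, since $p$ and $n$ are nonzero and supported on disjoint vertex sets. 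As a generic $M$ is nonzero and admits no completion of rank $\le 1$ but admits one of rank $2$, its minimal completion rank on $U$ is exactly $2$; hence $2$ is a typical rank, and since the maximum typical rank is at most $2\,\gcr(G)=2$, it equals $2$.
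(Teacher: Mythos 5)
Your proposal is correct, and it reaches the theorem by a somewhat different route than the paper, at least in the harder direction. The paper's proof that two odd cycles force $2$ to be typical is an inductive reduction: it uses the local rank-one formulas coming from a triangle or a three-edge path to contract the graph until the two cycles become two loops, and then exhibits the obstruction as a $2\times 2$ principal minor with opposite-sign diagonal entries. You instead parametrize all rank-one completions globally as $\varepsilon vv^\top$ and attach to each cycle component the sign invariant $\varepsilon_C(M)$ (sign of the loop entry, or of the product of the cycle entries), so that the obstruction on your open set $U$ is an explicit semialgebraic sign clash, and the minimal completion rank there is pinned down by the explicit rank-two completion $pp^\top-nn^\top$; this buys a cleaner, induction-free argument and in fact a description of the open regions, at the cost of leaning a bit more heavily on the Singer--Cucuringu structure of $\gcr$-$1$ components (which the paper also invokes, as its Proposition on $\gcr$-one graphs). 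Your converse direction (at most one cycle implies only $1$ is typical) is essentially the paper's argument in different clothing: both propagate a rank-one factorization component by component, choosing the free sign on tree components to match the unique forced sign. One cosmetic slip: with a single concatenated $v$, the cross-component entries of $\varepsilon vv^\top$ are generally nonzero, not $0$; this is immaterial since those entries are unspecified, but the parenthetical should be dropped.
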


A tree with at most one non-leaf vertex is called a \textit{star tree}.
A graph is called \textit{looped} if every vertex has a loop.
Theorem 2.5 in \cite{GSMR3706762} implies that a looped graph has generic completion rank at most $2$
if and only if it has no cycles (aside from loops).
We build on this, characterizing the typical ranks of looped graphs with generic completion rank at most~$2$.

\begin{thm*}[Theorem \ref{thm:loopedTrees}]
	Let $G$ be a looped graph.
	\begin{enumerate}
		\item The generic completion rank of $G$ is at most $2$ if and only if $G$ is a looped forest.
		Equality is attained if and only if $G$ has at least one non-loop edge \cite[Theorem 2.5]{GSMR3706762}.
		\item When $G$ has generic completion rank $1$ and at least two vertices, $G$ also has $2$ as a typical rank.
		\item When $G$ has generic completion rank $2$,
		the maximum typical rank of $G$ is
		\begin{enumerate}
			\item $2$ if $G$ has exactly two vertices
			\item $3$ if $G$ has at least three vertices 
			and is the union of a looped star tree and a looped set of isolated vertices, and
			\item $4$ otherwise.
		\end{enumerate}
	\end{enumerate}
\end{thm*}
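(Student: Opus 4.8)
The plan is as follows. Part (1) is \cite[Theorem 2.5]{GSMR3706762}, together with the remark that a looped graph with no non-loop edge has generic completion rank $1$, since over $\cc$ a generic diagonal matrix with nonzero entries is completed to rank $1$ and cannot drop to rank $0$. Because the maximum typical rank of any graph is at most twice its generic completion rank, for (2) and (3) it suffices in each case to pin down the minimal completion rank on a full-dimensional open set of partial matrices, together with an upper bound on the minimal completion rank of a generic partial matrix whenever the factor-of-two bound is not already sharp. For (2): by (1) the graph $G$ is a set of $n\ge 2$ looped isolated vertices, so a $G$-partial matrix is a fully specified diagonal matrix $\diag(d_1,\dots,d_n)$; it always admits the rank-$\le 2$ completion $\mathbf 1 w^{\top}+w\mathbf 1^{\top}$ with $w=\tfrac12(d_1,\dots,d_n)$, while on the open set where some $d_i>0$ and some $d_j<0$ it admits no rank-$\le 1$ completion, a real symmetric matrix of rank $\le 1$ being $\pm vv^{\top}$ and hence having diagonal entries of a single sign. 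So $2$ is a typical rank, and being at most $2\gcr(G)=2$ it is the maximum typical rank.

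For (3), $G$ is a looped forest with a non-loop edge, so $\gcr(G)=2$ and the maximum typical rank is at most $4$. Case (3a) is immediate: the only looped forest on two vertices with a non-loop edge is $K_2^{\circ}$, which has no unspecified entry, so the minimal completion rank of a generic partial matrix is the rank of a generic $2\times 2$ symmetric matrix, namely $2$. The workhorse for the remaining cases is a computation for a looped star tree $T$ with center vertex $1$ and leaves $2,\dots,k$, whose only unspecified entries are the $x_{ij}=M_{ij}$ for distinct leaves $i,j$: expanding the $3\times 3$ minor of a completion on $\{1,i,j\}$ and completing the square gives the identity
\[
	(a_1 x_{ij}-b_i b_j)^2=D_iD_j,\qquad D_m:=a_1a_m-b_m^2=\det\begin{pmatrix}a_1&b_m\\ b_m&a_m\end{pmatrix}.
\]
Hence, for a generic $T$-partial matrix, a rank-$\le 2$ completion forces $D_iD_j\ge 0$ for all leaf pairs, i.e.\ $D_2,\dots,D_k$ all of one sign; conversely, when they share a sign an explicit rank-$\le 2$ completion exists (for $a_1>0$ and all $D_m>0$ take $\begin{pmatrix}a_1&v^{\top}\\ v&a_1^{-1}vv^{\top}+ww^{\top}\end{pmatrix}$ with $v=(b_2,\dots,b_k)$ and $w_m=\sqrt{D_m/a_1}$; the other sign patterns by negating $ww^{\top}$ and/or $M$). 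So a generic looped-star-tree partial matrix is rank-$\le 2$ completable precisely when $D_2,\dots,D_k$ all have the same sign.

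For (3b), write $G=T\sqcup I$ with $T$ a looped star tree on $k\ge 2$ vertices centered at $1$ and $I$ a set of $l\ge 0$ looped isolated vertices, $k+l\ge 3$. Every generic $G$-partial matrix has a rank-$\le 3$ completion: we look for one of the shape $e_1w^{\top}+we_1^{\top}+pq^{\top}+qp^{\top}$ with $w\in\mathrm{span}(p,q)$, which automatically has rank $\le 3$; imposing the diagonal conditions $p_mq_m=a_m/2$ and setting $w=-q_1p+\nu q$, the star-edge conditions $M(\bfx)_{1j}=b_j$ become $(\nu+p_1)q_j=b_j$ and the center condition $M(\bfx)_{11}=a_1$ becomes $2\nu q_1=a_1$, which one solves over $\rr$ for generic data (choose $q_1\neq 0$, set $\nu=a_1/(2q_1)$, pick $p_1$ with $\nu+p_1\neq 0$, and the remaining $q_j,p_j$ are then determined). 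Thus the maximum typical rank is at most $3$. For a matching open set on which the minimal completion rank is $\ge 3$: if $k\ge 3$, take the set where $D_2,\dots,D_k$ are not all of one sign, on which already the $T$-submatrix has no rank-$\le 2$ completion; if $k=2$ (so $l\ge 1$), write $M=\begin{pmatrix}A&C\\ C^{\top}&E\end{pmatrix}$ with $A=\begin{pmatrix}a_1&b_1\\ b_1&a_2\end{pmatrix}$ and $E$ the $l\times l$ block carrying the specified diagonal $d_1,\dots,d_l$, and take the set where $A$ is positive definite and some $d_i<0$: there every completion satisfies $\rank M(\bfx)=2+\rank(E-C^{\top}A^{-1}C)$, whose $i$-th diagonal entry equals $d_i-c_i^{\top}A^{-1}c_i\le d_i<0$ and so is nonzero, forcing $\rank M(\bfx)\ge 3$. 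In either case $3$ is a typical rank and is the maximum typical rank of $G$.

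For (3c), $G$ is not of the form in (3b), so among its tree components either one has diameter at least $3$ (recall a tree is a star exactly when its diameter is at most $2$), hence contains an induced path $i_1i_2i_3i_4$, or two distinct components each contain a non-loop edge, say $i_1i_2$ and $i_3i_4$. In the first case the $\{i_1,i_2,i_3,i_4\}$-submatrix of $M$ is a looped-$P_4$ partial matrix, and in the second it has no specified entry off the two diagonal blocks; in both cases it equals $\begin{pmatrix}A&C\\ C^{\top}&B\end{pmatrix}$ with $A=\begin{pmatrix}a_1&b_1\\ b_1&a_2\end{pmatrix}$, $B=\begin{pmatrix}a_3&b_3\\ b_3&a_4\end{pmatrix}$, and at most one of the four entries of $C$ specified. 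On the open set where $A$ is positive definite and $B$ is negative definite, for \emph{any} completion the off-diagonal block $C$ is a real $2\times 2$ matrix and $\rank\begin{pmatrix}A&C\\ C^{\top}&B\end{pmatrix}=2+\rank(B-C^{\top}A^{-1}C)=4$, since $C^{\top}A^{-1}C$ is positive semidefinite and hence $B-C^{\top}A^{-1}C$ is negative definite; so $\rank M(\bfx)\ge 4$, and with the bound $\le 4$ the minimal completion rank is $4$ there, making $4$ the maximum typical rank. The point I expect to require the most care is the rank-$\le 3$ construction in (3b), which must simultaneously fit a diagonal of arbitrary signs, the prescribed star edges, and the rank bound over $\rr$: the constrained ansatz with $w\in\mathrm{span}(p,q)$ is engineered so that these conditions decouple, but checking real solvability for all generic data needs attention. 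The rest is routine bookkeeping: that (3a), (3b), (3c) partition the generic-completion-rank-$2$ looped forests, and that a non-star tree always contains an induced $P_4$.
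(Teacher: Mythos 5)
Your proof is correct, but it takes a noticeably more hands-on route than the paper. The paper assembles Theorem \ref{thm:loopedTrees} from general-purpose lemmas: part (2) from the full-rank-typical characterization (Theorem \ref{thm:fullranktypicalCharacterization}, via $K_1^\circ\sqcup K_1^\circ$); the value $3$ in case (3b) from the looped-suspension-vertex lemma (Lemma \ref{lem:addingSuspensionVertex}) applied to isolated looped vertices, giving both Lemma \ref{lemma:startree} (star trees have typical ranks $2,3$) and the independent-set upper bound of Proposition \ref{prop:upperBoundForMaxTypicalRank}, with the star-plus-isolated-vertices case of attainment also leaning on Proposition \ref{prop:disjointThreeCliques}; and the value $4$ in case (3c) from $K_2^\circ\sqcup K_2^\circ$ together with Proposition \ref{prop:KnKmfullrank}/Theorem \ref{thm:fullranktypicalCharacterization}. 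You replace each of these with explicit certificates: the completion $\mathbf 1w^\top+w\mathbf 1^\top$ and the $\pm vv^\top$ sign observation for part (2); the discriminant identity $(a_1x_{ij}-b_ib_j)^2=D_iD_j$ (which I checked and is correct) to force rank $\ge 3$ on star trees with $D$'s of mixed sign; the rank-$\le 3$ ansatz $e_1w^\top+we_1^\top+pq^\top+qp^\top$ with $w\in\mathrm{span}(p,q)$ in place of the suspension-vertex upper bound (the algebra works; note only that for the isolated-vertex coordinates $p_j,q_j$ are merely constrained by $p_jq_j=a_j/2$ and must be chosen, not ``determined''); and a positive-definite/negative-definite Schur-complement argument for the rank-$4$ lower bound, which essentially re-proves Proposition \ref{prop:KnKmfullrank} in the $2\times 2$-block case while correctly noting that a specified middle entry of $C$ is harmless. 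Both arguments share the same background machinery (the $2\cdot\gcr$ bound and the fact that a generic partial matrix is minimally completable to a typical rank), and your case analysis of looped forests into (3a), (3b), (3c) matches the paper's. What the paper's route buys is reusability (the suspension lemma and esd results are used again, e.g.\ for cycles, and give full lists of typical ranks rather than just maxima); what yours buys is a self-contained proof with explicit semialgebraic conditions (the signs of the $D_m$, definiteness of blocks) identifying open regions for each minimal completion rank.
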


\section{Full-rank typical graphs}
We first answer a question posed in \cite{bernstein2018typical},
characterizing the graphs that are full-rank typical.
We start with a simple, but important, observation.

\begin{rmk}\label{rmk:addingEdges}
    If $G$ is full-rank typical,
    then any graph obtained by adding edges to $G$ is also full-rank typical.
\end{rmk}

\subsection{The characterization} We now state the main result of this subsection.
Note that a bipartite semisimple graph cannot have loops.

\begin{thm}\label{thm:fullranktypicalCharacterization}
	A graph $G$ is full-rank typical if and only if its complement $G^c$ is bipartite.
\end{thm}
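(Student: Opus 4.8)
The plan is to establish the two implications separately. The implication ``$G^c$ bipartite $\Rightarrow$ $G$ full-rank typical'' is a short explicit construction, while the converse is the substantive part and rests on a determinant computation along a carefully chosen line of completions.

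For the forward implication, suppose $G^c$ is bipartite with colour classes $A$ and $B$. Then $G^c$ has no loops, and every pair of vertices lying inside $A$, inside $B$, or on a loop is a non-edge of $G^c$ and hence a specified entry; so after reordering the vertices a $G$-partial matrix has the block form $M=\begin{pmatrix} P & X\\ X^\top & Q\end{pmatrix}$ with $P$ and $Q$ fully specified symmetric matrices and only the $A$--$B$ block partially specified. Choose the specified entries so that $P\succ 0$ and $Q\prec 0$; this is an open and nonempty condition on $M$. For every real completion the Schur complement $Q-X^\top P^{-1}X$ is then negative definite, so $\det M(\bfx)=\det(P)\det(Q-X^\top P^{-1}X)\neq 0$ for all real $\bfx$; thus every such $M$ is minimally completable to rank $n$, and $n$ is a typical rank of $G$. (Alternatively one can invoke Theorem~\ref{thm:esdArgument-disjointcliques} with $P\succ 0$ and $Q\prec 0$, but the argument above is self-contained.)

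For the converse I will prove the contrapositive: if $G^c$ is not bipartite then $G$ is not full-rank typical. Since $G^c$ is not bipartite it contains an odd cycle $C$, where I regard a loop as a cycle of length one; let $S=V(C)$ and $k=|S|$, so $k$ is odd. Every edge of $C$ is a non-edge of $G$, so $G$ is a subgraph of $G':=C^c$ (complement inside $K_n^\circ$) on the same vertex set, and by the contrapositive of Remark~\ref{rmk:addingEdges} it is enough to prove that $G'$ is not full-rank typical. The reason for this reduction is that $G'$ is extremal: its only unspecified entries are exactly the $k$ entries on the edges of $C$, and the vertices of $V\setminus S$ induce a $K_{n-k}^\circ$ in $G'$, so the principal submatrix $M[V\setminus S]$ is completely specified. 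Now fix a generic real $G'$-partial matrix $M$, let $M_t$ be the completion obtained by setting every unspecified entry equal to $t$, and write $M_t=M_0+tW$ where $W$ is the adjacency matrix of the cycle $C$ on $S$. The key point is that $\det M_t$ is a univariate polynomial in $t$ of degree exactly $k$ whose leading coefficient is a nonzero scalar multiple of $\det M[V\setminus S]$: in the Leibniz expansion a permutation picks up at most one factor of $t$ for each vertex of $S$ that it moves along an edge of $C$, and it picks up all $k$ of them precisely when its restriction to $S$ traverses the whole cycle (for $k\ge 3$ these are the two rotations of $C$, each of sign $(-1)^{k-1}=+1$; for $k=1$ it is the identity, so $\det M_t$ is simply linear in $t$ with leading coefficient $\det M[V\setminus S]$). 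Since $M[V\setminus S]$ is a generic fully specified symmetric matrix we have $\det M[V\setminus S]\neq 0$, so $\det M_t$ is a nonconstant real polynomial of odd degree $k$ and therefore vanishes at some real $t_0$; then $M_{t_0}$ is a real completion of $M$ of rank less than $n$. As this applies to all $M$ in a dense open subset of $\RR^{E(G')}$, the set of $G'$-partial matrices that are minimally completable to rank $n$ has empty interior, so $n$ is not a typical rank of $G'$, and hence $G'$ — and therefore $G$, by Remark~\ref{rmk:addingEdges} again — is not full-rank typical.

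The step I expect to be the main obstacle, and the reason for passing from $G$ to the extremal graph $C^c$, is controlling the leading coefficient $\det M[V\setminus S]$ in the computation above. Run directly on $G$, this submatrix is itself only partially specified and can be forced to be identically singular; then the degree in $t$ drops below $k$ and, since the surviving exponents of $t$ are all even, the ``odd degree forces a real root'' argument collapses. Replacing $G$ by $C^c$, where $V\setminus S$ induces a looped clique, turns $M[V\setminus S]$ into an unconstrained generic symmetric matrix and removes this difficulty entirely. The other point demanding care is the small combinatorial claim underlying the degree count: within a cycle the only permutation moving every vertex along a cycle edge is a rotation — a single cycle edge used as a transposition covers only two vertices, and a maximum matching in an odd cycle misses one vertex — so the top-degree term in $t$ comes exactly from the rotations, and their contributions add rather than cancel precisely because $k$ is odd.
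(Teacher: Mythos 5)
Your proof is correct and takes essentially the same route as the paper's: the bipartite direction is the positive/negative definite block construction (the paper's Proposition~\ref{prop:KnKmfullrank} combined with Remark~\ref{rmk:addingEdges}), and the non-bipartite direction reduces via Remark~\ref{rmk:addingEdges} to the graph whose complement is an odd cycle plus isolated vertices and then produces a real zero of the determinant, which the paper deduces from its odd total degree. Your restriction to the line of completions $\bfx = t\mathbf{1}$ and the explicit leading coefficient $2\det M[V\setminus S]$ simply spell out the genericity behind the paper's degree claim.
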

\begin{proof}
	We first show that if $G^c$ is not bipartite, then $G$ is not full-rank typical.
	Remark \ref{rmk:addingEdges} implies that
	removing edges from a graph that is not full-rank typical
	produces another graph that is not full-rank typical.
	Since every non-bipartite graph contains an odd cycle,
	it suffices to let $G$ be a graph
	whose complement consists of an odd cycle and an independent set of vertices,
	and then show that $G$ is not full-rank typical.
	So let $M$ be a generic $G$-partial matrix.
	Then $\det(M(\bfx))$ is a polynomial in the indeterminates $\bfx$, with odd total degree,
	and thus has a real zero.
	So $G$ is not full-rank typical.

	Conversely, if $G^c$ is bipartite, then for some positive integers $m,n$, $G$ contains $K_n^\circ \sqcup K_m^\circ$ as a subgraph. Therefore, $G$ is full-rank typical by Remark \ref{rmk:addingEdges} and Proposition \ref{prop:KnKmfullrank} below.
\end{proof}

\begin{cor}\label{cor:maxTypicalLowerBound}
    The maximum typical rank of $G$ is at least the maximum number of vertices
    in a bipartite induced subgraph of the complement $G^c$.
\end{cor}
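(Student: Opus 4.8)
The plan is to reduce the corollary to Theorem~\ref{thm:fullranktypicalCharacterization} by a ``lifting'' argument. Let $S\subseteq[n]$ be a set of vertices for which the induced subgraph $(G^c)[S]$ is bipartite, and put $k=|S|$. Then the induced subgraph $H:=G[S]$ has exactly $k$ vertices, and its complement (inside $K_k^\circ$) is precisely $(G^c)[S]$, which is bipartite; so $H$ is full-rank typical by Theorem~\ref{thm:fullranktypicalCharacterization}, i.e.\ $k$ is a typical rank of $H$. The goal is then to show that a typical rank of an induced subgraph of $G$ forces $G$ to have a typical rank at least as large; applying this to a maximum such $S$ yields the stated bound.

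First I would record the induced-subgraph mechanism. Since $k$ is a typical rank of $H$ and $H$ has $k$ vertices, there is a nonempty Euclidean-open set $V\subseteq\rr^{E(H)}$ such that every completion of every $M'\in V$ is nonsingular (for a $k\times k$ matrix, ``minimally completable to rank $k$'' just means all completions are invertible). Pull $V$ back along the coordinate projection $\pi\colon\rr^{E}\to\rr^{E(H)}$ that restricts a $G$-partial matrix to the entries indexed by $S$, and set $U:=\pi^{-1}(V)$, a nonempty open subset of $\rr^{E}$. For any $M\in U$ and any real vector $\bfx$ of missing entries, the principal submatrix of $M(\bfx)$ on the rows and columns in $S$ is a completion of $M|_S=\pi(M)\in V$, hence is invertible; since taking a submatrix cannot increase rank, $\rank M(\bfx)\ge k$. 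Thus every $M\in U$ has minimal completion rank at least $k$.

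The remaining step, and the one requiring the most care, is to pass from ``$U$ is a nonempty open set on which minimal completion rank is $\ge k$'' to ``$G$ has a typical rank $\ge k$''. The subtlety is that minimal completion rank is not obviously semicontinuous, so one cannot simply evaluate it at a ``generic'' point of $U$ and declare the value typical. Instead I would use that for each $r$ the set $\{M\in\rr^{E}:\ M(\bfx)\text{ has a completion of rank}\le r\}$ is the image under the polynomial map $M(\bfx)\mapsto M$ of the algebraic set cut out by the $(r+1)\times(r+1)$ minors of $M(\bfx)$ in the joint variables $(M,\bfx)$, hence is semialgebraic by Tarski--Seidenberg; consequently each level set $U_r:=\{M\in U:\ \mathrm{mcr}(M)=r\}$ is semialgebraic. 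Since $U=\bigcup_{r=k}^{n}U_r$ and $U$ has dimension $|E|$, some $U_{r_0}$ with $r_0\ge k$ has dimension $|E|$ and therefore contains a nonempty Euclidean-open subset of $\rr^{E}$. On that open set $\mathrm{mcr}\equiv r_0$, so $r_0$ is a typical rank of $G$, whence the maximum typical rank of $G$ is at least $r_0\ge k=|S|$. Taking $S$ to realize the maximum number of vertices in a bipartite induced subgraph of $G^c$ gives the corollary.
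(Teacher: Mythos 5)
Your argument is correct in substance, and it is essentially the derivation the paper intends: the paper states the corollary with no proof at all, treating it as an immediate consequence of Theorem~\ref{thm:fullranktypicalCharacterization} together with the monotonicity of maximum typical rank under adding vertices and edges (a fact it invokes elsewhere, e.g.\ in the proof of Proposition~\ref{prop:maximumtypicalrank3graphs}, again without proof). What you do differently, and what your write-up buys, is an actual proof of the needed monotonicity step: pulling back the witnessing open set $V$ along the restriction map and then using Tarski--Seidenberg to see that some rank value $r_0\ge k$ must be attained on a full-dimensional set. One small repair is needed in that last step: as written, $U=\pi^{-1}(V)$ need not be semialgebraic, since $V$ is merely some open set, so your sets $U_r$ need not be semialgebraic either, and for a non-semialgebraic set ``dimension $|E|$'' does not imply nonempty interior. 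The fix is immediate: either take $V$ to be the interior of the (semialgebraic, by the same projection argument) set of $H$-partial matrices minimally completable to rank $k$ --- this interior is nonempty precisely because $k$ is a typical rank of $H$ --- so that $U$ and every $U_r$ are semialgebraic; or bypass $U_r$ and work with the global level sets $L_r=\{M\in\rr^{E}:\ \mathrm{mcr}(M)=r\}$, which are semialgebraic: since $U\subseteq\bigcup_{r\ge k}L_r$ and a finite union of semialgebraic sets of dimension less than $|E|$ cannot contain a nonempty open subset of $\rr^{E}$, some $L_{r_0}$ with $r_0\ge k$ has full dimension, hence nonempty interior, so $r_0$ is a typical rank of $G$ that is at least $k$.
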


One might ask whether the bound given by Corollary \ref{cor:maxTypicalLowerBound} is sharp.
Unfortunately, this is not the case as shown by the following example. 

\begin{exm}
	Consider a looped complete bipartite graph $K^\circ_{m,n}$ for $m\geq 2$ and $n=\binom{m}{2}$. It is known that the generic completion rank of $K^\circ_{m,n}$ is equal to $m$ in \cite[Theorem 2.5]{MR3903791}. Note that the maximum size of a bipartite induced subgraph of $(K^{\circ}_{m,n})^c$ is $4$. Therefore, if we choose $m>4$, then its maximum typical rank is greater than $4$.
\end{exm}

\subsection{The disjoint union of two cliques}
The main result of this subsection is Theorem~\ref{thm:esdArgument-disjointcliques}
which explains how to determine the minimum rank of a completion
of a given generic $G$-partial matrix when $G$ is the disjoint union of two cliques.
We begin with a special case of Theorem \ref{thm:esdArgument-disjointcliques}
that will be necessary for its proof.

\begin{prop}\label{prop:KnKmfullrank}
	Let $m,n \ge 0$ be integers
	and let $G = K_m^\circ \sqcup K_n^\circ$.
	A $G$-partial matrix
	\[
		M=\begin{pmatrix}
		A & *\\
		* & B
		\end{pmatrix}
	\]
	is minimally completable to full-rank if and only if $A$ is positive definite (negative definite resp.) and $B$ is negative definite (positive definite resp.).
	In particular, $G = K_m^\circ \sqcup K_n^\circ$ is full-rank typical.
\end{prop}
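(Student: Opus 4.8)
The plan is to analyze the completion $M(\bfx)$ directly in block form. Writing $\bfx$ as an $m\times n$ matrix $X$ (the unspecified block), a completion is
\[
	M(X)=\begin{pmatrix} A & X \\ X^{\top} & B\end{pmatrix},
\]
and the rank of $M(X)$ is governed by whether the column space of $X$ is ``compatible'' with $A$ and $B$. Since we are characterizing \emph{generic} partial matrices, we may and will assume $A$ and $B$ are invertible, so their inertias $(p_A,n_A)$ and $(p_B,n_B)$ are well defined with $p_A+n_A=m$, $p_B+n_B=n$. The first step is the standard Schur-complement identity: because $A$ is invertible,
\[
	\rank M(X)=m+\rank\bigl(B-X^{\top}A^{-1}X\bigr).
\]
Thus $M(X)$ has full rank $m+n$ if and only if $B-X^{\top}A^{-1}X$ is invertible, and more generally minimizing $\rank M(X)$ amounts to maximizing the rank drop of the Schur complement $S(X):=B-X^{\top}A^{-1}X$ over all real $X$.

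The second step is to understand how small $\rank S(X)$ can be made. Here I would congruence-diagonalize: write $A^{-1}=P^{\top}\operatorname{diag}(I_{p_A},-I_{n_A})P$ and $B=Q^{\top}\operatorname{diag}(I_{p_B},-I_{n_B})Q$ with $P,Q$ invertible; after the substitution $Y=PXQ^{-1}$ the question becomes how small the rank of $\operatorname{diag}(I_{p_B},-I_{n_B})-Y^{\top}\operatorname{diag}(I_{p_A},-I_{n_A})Y$ can be. The key linear-algebra fact is that a symmetric matrix of inertia $(p_B,n_B)$ can be made singular by subtracting a matrix of the form $Y^{\top}\operatorname{diag}(I_{p_A},-I_{n_A})Y$ precisely when the two inertias ``overlap'' in the right way. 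In the extreme case relevant here: $S(X)$ can be made to vanish in a positive-definite or negative-definite direction only when one of $A,A^{-1}$ and $B$ contributes eigenvalues of the opposite sign. Concretely, if $A$ is positive definite then $X^{\top}A^{-1}X$ is positive semidefinite for every real $X$, so $S(X)=B-X^{\top}A^{-1}X$ is negative definite for all $X$ whenever $B$ is negative definite (subtracting a PSD matrix from a negative-definite one stays negative definite), hence $S(X)$ is \emph{always} invertible and $M(X)$ is forced to full rank. Symmetrically for $A$ negative definite and $B$ positive definite. This gives the ``if'' direction.

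For the ``only if'' direction I would show that whenever $A$ is not definite, or $B$ is not definite, or they are definite of the same sign, one can explicitly choose a real $X$ making $S(X)$ singular, hence dropping the rank of $M(X)$ below $m+n$. If, say, $A$ has a negative eigenvalue then $X^{\top}A^{-1}X$ can realize a negative-definite form on a one-dimensional subspace while being arbitrarily small elsewhere; matching that direction to any eigenvector of $B$ (of either sign, after scaling) kills an eigenvalue of $S(X)$. If $A$ and $B$ are both positive definite, take $X$ with a large singular value aligned so that $X^{\top}A^{-1}X$ overtakes $B$ along one direction, again producing a sign change and hence a zero eigenvalue of $S(X)$ by continuity. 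Packaging these cases with the inertia bookkeeping from the previous step establishes that full rank is forced exactly in the two definite-of-opposite-sign cases. Finally, the last sentence follows immediately: the pairs $(A,B)$ with $A$ positive definite and $B$ negative definite form a nonempty open subset of $\rr^{E}$, so $m+n$ is a typical rank and $K_m^\circ\sqcup K_n^\circ$ is full-rank typical. I expect the main obstacle to be the ``only if'' direction --- specifically, turning the intuitive ``align a direction to force a sign change'' argument into a clean statement valid for all non-excluded inertia pairs, including the boundary case where the Schur complement only becomes singular in the limit and one must invoke continuity/openness of the invertible locus to land at an actual real completion of lower rank.
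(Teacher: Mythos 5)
Your ``if'' direction is fine and essentially equivalent to the paper's argument (the paper uses a kernel vector $y=(y_1,y_2)$ and the identity $y_1^TAy_1=y_2^TBy_2$; your Schur-complement version, namely that $B-X^TA^{-1}X$ stays negative definite whenever $A\succ 0$ and $B\prec 0$, is a correct alternative). The genuine problem is in the ``only if'' direction, where the key mechanism is stated with a sign error that would prove a false statement. If $u$ is an eigenvector of $A$ with negative eigenvalue $a$ and you take $X=uw^T$, then $X^TA^{-1}X=(\|u\|^2/a)\,ww^T$ is negative semidefinite, so $S(X)=B-X^TA^{-1}X$ is $B$ plus a positive semidefinite rank-one perturbation supported on $w$. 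If $w$ is an eigenvector of $B$ with a \emph{positive} eigenvalue, this perturbation pushes that eigenvalue further from zero and leaves the rest of the spectrum of $B$ unchanged, so no scaling kills an eigenvalue of $S(X)$; indeed, if ``either sign'' worked, you could apply it with $A$ negative definite and $B$ positive definite and contradict the ``if'' direction you just proved. The correct matching, which is exactly what the paper does after orthogonally diagonalizing $A$ and $B$, is between eigenvalues of the \emph{same} sign: choose eigenvalues $a$ of $A$ and $b$ of $B$ with $ab>0$ and set the corresponding entry of $X$ to $\sqrt{ab}$ (all other entries zero), which annihilates a row of the Schur complement. With that correction your case enumeration (A indefinite, or B indefinite, or both definite of the same sign) does cover the negation for invertible $A,B$, since each of those cases supplies a same-sign eigenvalue pair, and your separate continuity argument for two positive definite blocks becomes unnecessary.

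A secondary issue: the proposition is stated for \emph{all} $G$-partial matrices, not just generic ones, so you cannot restrict to invertible $A$ and $B$ by fiat. The ``only if'' direction must also produce a non-full-rank completion when, say, $A$ is singular; this is easy (take $X=0$, so a kernel vector of $A$ extends to a kernel vector of $M(0)$ --- the paper phrases it as making the columns of $X^T$ satisfy a linear relation satisfied by the columns of $A$), but it has to be said rather than removed by a genericity assumption that is not in the statement.
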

\begin{proof}
	Throughout, we will view $M(\bfx)$ as a $2\times 2$ symmetric block matrix.
	We denote the upper-right block, whose entries are given by $\bfx$, by $X$.
	Without loss of generality,
	let $A$ be an $m\times m$ positive definite matrix and $B$ be an $n\times n$ negative definite matrix.
	Let $y \in \ker M(\bfx)$.
	We will write $y$ as
	\[
		y = \begin{pmatrix}
		    y_1 \\
		    y_2
		\end{pmatrix}
		\qquad \textnormal{where} \qquad
		y_1 \in \rr^m \ \ \textnormal{and} \ \ y_2 \in \rr^n.
	\]
	Since $y \in \ker M(\bfx)$, we have
	$Ay_1 =- Xy_2$ and $X^T y_1 =-By_2$, 
	and therefore $y_1^T A y_1=y_2^T B y_2$.
	Since $A$ is positive definite and $B$ is negative definite,
	this implies that $y_1$ and $y_2$ are both zero vectors
	and so $M(\bfx)$ is full-rank for all $\bfx$.
	
	For the converse, assume that there are eigenvalues $a$ of $A$ and $b$ of $B$
	such that $ab \ge 0$.
	If without loss of generality $A$ is not full rank,
	then we may complete to some $M(\bfx)$ so that the columns of $X^T$ satisfy a relation that the columns
	of $A$ satisfy, thus making $M(\bfx)$ not have full rank.
	So assume $ab > 0$.
	Let $C$ and $D$ be orthogonal matrices such that
	$C^T A C$ and $D^T B D$ are diagonal matrices
	whose nonzero entries are the eigenvalues of $A$ and $B$,
	leading with $a$ and $b$ respectively.
	Treating $\bfx$ as a vector of indeterminates,
	moving from
	\[
		M(\bfx) =\begin{pmatrix}
			A & X\\
			X^T &B
		\end{pmatrix}
		\qquad {\rm to} \qquad
		\begin{pmatrix}
		    C^T & 0 \\
		    0 & D^T
		\end{pmatrix}
		\begin{pmatrix}
			A & X\\
			X^T &B
		\end{pmatrix}
		\begin{pmatrix}
		    C & 0 \\
		    0 & D
		\end{pmatrix}
		= \begin{pmatrix}
			C^TAC & C^TXD\\
			D^TX^TC &D^TBD
		\end{pmatrix}
	\]
	corresponds to a linear change of variables when taking determinants,
	so we may without loss of generality assume that $A$ and $B$ are diagonal matrices
	with $a$ and $b$ as the respective leading entries.
	Consider a completion $M(\bfx)$ of $M$ obtained by setting $X_{11}=\sqrt{a_1b_1}$ and $X_{21}=\cdots =X_{n1}=0$.
	The rank formula for Schur complements gives
	\[
		\rank(M(\bfx))=\rank(A)+\rank(B-X^T A^{-1}X).
	\]
	Note that the entries of the first row of $B-X^T A^{-1} X$ are all zero.
	This means that $\rank(B-X^T A^{-1}X)$ is less than $n$
	and so we may complete $M$ to have non-full rank.
\end{proof}

Before we can state Theorem \ref{thm:esdArgument-disjointcliques},
we need the following definition.

\begin{defn}
    Given a real full-rank symmetric matrix $A$,
	let $p_A,n_A$ denote the number of positive and negative eigenvalues of $A$.
	Given two real full-rank symmetric matrices $A$ and $B$ potentially of different sizes, 
	we define \emph{eigenvalue sign disagreement} between $A$ and $B$, denoted $\esd(A,B)$, as follows
	\[	
		\text{esd}(A,B)
		:=\left\{\begin{array}{ll}
		0 & \text{if }(p_A-p_B)(n_A-n_B)\geq 0\\
		\min\{|p_A-p_B|,|n_A-n_B|\} & \text{otherwise}
		\end{array}\right.
	\]
\end{defn}




\begin{thm}\label{thm:esdArgument-disjointcliques}
	Let $m,n \ge 0$ be integers
	and let $G = K_m^\circ \sqcup K_n^\circ$.
	Let $A$ be a full-rank $m\times m$ symmetric matrix and $B$ be a full-rank $n\times n$ symmetric matrix,
	and consider the following $G$-partial matrix
	\[
		M=\begin{pmatrix}
		A & *\\
		* &B
		\end{pmatrix}.
	\]
	Then, $M$ is minimally completable to rank $\max\{n,m\}+\esd(A,B)$.
\end{thm}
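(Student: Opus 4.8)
The plan is to reduce, via a simultaneous orthogonal congruence on the two diagonal blocks, to the case where $A$ and $B$ are diagonal sign matrices, and then to track rank as we fill in the off-diagonal block $X$ one ``matched sign pair'' at a time. Writing $M(\bfx)$ as a $2\times 2$ block matrix with unknown block $X$ of size $m\times n$, the key observation (already used in the proof of Proposition~\ref{prop:KnKmfullrank}) is that conjugating by $\diag(C^T,D^T)$ and $\diag(C,D)$ for orthogonal $C,D$ is a linear change of variables on $\bfx$, hence preserves the set of achievable ranks of completions. So without loss of generality $A = \diag(I_{p_A}, -I_{n_A})$ and $B = \diag(I_{p_B}, -I_{n_B})$, with $p_A + n_A = m$ and $p_B + n_B = n$.

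\textbf{Lower bound.} I would show that no completion can have rank below $\max\{m,n\} + \esd(A,B)$. Say $m \le n$, so $\max\{m,n\} = n$; the case $\esd = 0$ is vacuous (rank is always at least $n$ since $B$ is invertible, giving an $n\times n$ invertible submatrix), so suppose $(p_A - p_B)(n_A - n_B) < 0$, say $p_A > p_B$ and $n_A < n_B$, and let $e := \esd(A,B) = \min\{p_A - p_B, n_B - n_A\}$. For any real $\bfx$, the matrix $M(\bfx)$ is a real symmetric matrix whose upper-left $m\times m$ block is $A$; by Cauchy interlacing (eigenvalue interlacing for symmetric matrices and their principal submatrices), the number of positive eigenvalues of $M(\bfx)$ is at least the number of positive eigenvalues of $A$, i.e. $p_{M(\bfx)} \ge p_A$; similarly $n_{M(\bfx)} \ge n_A$... but also, interlacing with the $B$-block gives $p_{M(\bfx)} \ge p_B$ and $n_{M(\bfx)} \ge n_B$. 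Combining, $\rank M(\bfx) = p_{M(\bfx)} + n_{M(\bfx)} \ge p_A + n_B = p_A + n_B$. Since $p_A = p_B + (p_A - p_B)$ and $p_B + n_B = n$, we get $p_A + n_B = n + (p_A - p_B) \ge n + e$ when $e = p_A - p_B$; symmetrically, using $n_{M(\bfx)}\ge n_A$ together with $p_{M(\bfx)}\ge p_B$ gives $\rank M(\bfx) \ge p_B + n_A = m + (n_B - n_A)$, wait—this needs $m$, so instead pair the larger interlacing bounds: the clean statement is $\rank M(\bfx) \ge \max\{p_A + n_B,\ p_B + n_A\}$ and one checks $\max\{p_A+n_B, p_B+n_A\} \ge n + e$ whenever $m \le n$. (If $m > n$ the roles swap and the bound becomes $m + e$; in all cases it is $\max\{m,n\} + \esd(A,B)$.)

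\textbf{Upper bound.} For the matching construction, I would build $X$ explicitly. Group the $\min\{p_A, p_B\}$ ``$+/+$'' coordinate pairs and the $\min\{n_A, n_B\}$ ``$-/-$'' pairs: for each such pair of indices $(i,j)$ with $A_{ii} = B_{jj} = \pm 1$, set $X_{ij} = \pm 1$ (so that the $2\times 2$ submatrix $\begin{pmatrix}\pm 1 & \pm 1 \\ \pm 1 & \pm 1\end{pmatrix}$ is singular), and set all other entries of $X$ to $0$. After reordering rows and columns, $M(\bfx)$ becomes a direct sum of $\min\{p_A,p_B\} + \min\{n_A,n_B\}$ singular rank-one $2\times 2$ blocks together with the $1\times 1$ blocks coming from the ``unmatched'' diagonal entries of $A$ and of $B$. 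Its rank is therefore
\[
\bigl(\min\{p_A,p_B\} + \min\{n_A,n_B\}\bigr) + \bigl(|p_A - p_B| + |n_A - n_B|\bigr) + \bigl(|p_A-p_B|+|n_A-n_B| \text{ overcounted—recount}\bigr),
\]
so let me instead count directly: each matched pair contributes $1$, each unmatched diagonal entry of $A$ contributes $1$, each unmatched diagonal entry of $B$ contributes $1$. That is
\[
\rank M(\bfx) = \min\{p_A,p_B\} + \min\{n_A,n_B\} + |p_A - p_B| + |n_A - n_B| = \max\{p_A,p_B\} + \max\{n_A,n_B\}.
\]
A short case check shows $\max\{p_A,p_B\} + \max\{n_A,n_B\} = \max\{m,n\} + \esd(A,B)$: when $(p_A-p_B)(n_A-n_B)\ge 0$ the two maxes are attained by the same matrix so the sum is $\max\{m,n\}$ and $\esd = 0$; otherwise, say $p_A \ge p_B$, $n_A \le n_B$, the sum is $p_A + n_B = (p_B + n_B) + (p_A - p_B) = n + (p_A-p_B)$ and also $= (p_A+n_A) + (n_B - n_A) = m + (n_B - n_A)$, whence it equals $\max\{m,n\} + \min\{p_A-p_B, n_B-n_A\} = \max\{m,n\}+\esd(A,B)$. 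Combined with the lower bound, this is exactly the minimal completion rank.

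\textbf{Main obstacle.} The genuinely delicate point is the lower bound, i.e. arguing that \emph{every} real completion has rank at least $\max\{m,n\} + \esd(A,B)$: one must set up the Cauchy interlacing inequalities for both the $A$-block and the $B$-block simultaneously and extract the sign-count inequalities $p_{M(\bfx)} \ge \max\{p_A,p_B\}$ and $n_{M(\bfx)} \ge \max\{n_A,n_B\}$ in the right combination, then verify the arithmetic identity reconciling $\max\{p_A,p_B\}+\max\{n_A,n_B\}$ with $\max\{m,n\}+\esd(A,B)$. The upper-bound construction, by contrast, is essentially bookkeeping once the reduction to diagonal sign matrices is in place; and the reduction itself is justified exactly as in Proposition~\ref{prop:KnKmfullrank}. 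A cleaner route to the lower bound, which I would try first, is to invoke Proposition~\ref{prop:KnKmfullrank} together with the already-established Proposition (c.f. \cite[Proposition 6.1]{bernstein2018typical}) and an induction on $\max\{m,n\} - \min\{m,n\}$: pad the smaller block to make the two blocks the same size while controlling how the inertia changes, reducing to the equal-size full-rank-typical statement. But the interlacing argument above is self-contained and I expect it to be the shortest.
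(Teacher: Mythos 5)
Your proposal is correct, and the overall architecture (orthogonal congruence to diagonalize the blocks, a lower bound valid for every completion, and an explicit completion obtained by matching same-sign diagonal entries) matches the paper; the differences are in the two halves' tools. For the lower bound, the paper extracts a principal submatrix whose diagonal blocks are the negative part of $A$ and the positive part of $B$ (oppositely definite), and invokes Proposition~\ref{prop:KnKmfullrank} to conclude that submatrix is always full rank, giving $\rank M(\bfx)\ge n_A+p_B$; you instead use Cauchy interlacing to get the inertia bounds $p_{M(\bfx)}\ge\max\{p_A,p_B\}$ and $n_{M(\bfx)}\ge\max\{n_A,n_B\}$, which is the same quantity derived by a more standard, self-contained eigenvalue argument (at the cost of not reusing the structural Proposition~\ref{prop:KnKmfullrank}). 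For the upper bound, your completion is essentially the paper's (pair up same-sign diagonal entries and choose the matched off-diagonal entries to kill each $2\times 2$ minor), but you read off the rank from the resulting direct-sum decomposition into rank-one $2\times 2$ blocks and $1\times 1$ blocks, whereas the paper computes $\rank(B-X^TA^{-1}X)=\esd(A,B)$ via the Schur-complement rank formula; your identity $\max\{p_A,p_B\}+\max\{n_A,n_B\}=\max\{m,n\}+\esd(A,B)$ checks out. Two small points to tighten: an orthogonal congruence alone only makes $A,B$ diagonal with their eigenvalues, so to reach the sign matrices $\diag(I_{p},-I_{n})$ you need an additional positive diagonal congruence (also a linear change of variables on $X$), or else skip the normalization and set $X_{ij}=\sqrt{a_ib_j}$ on matched pairs; and the lower-bound paragraph should be rewritten to state the clean chain $\rank M(\bfx)\ge \max\{p_A,p_B\}+\max\{n_A,n_B\}=\max\{m,n\}+\esd(A,B)$ rather than the mid-course corrections, since that single inequality already covers both sign cases.
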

\begin{proof}
	Without loss of generality, assume that $n\geq m$.
	Let $C,D$ be orthogonal matrices such that $C^{T}AC$ and $D^{T}BD$ are diagonal.
	Conjugating the indeterminate matrix $M(\bfx)$ by
	\[
	\begin{pmatrix}
	C & 0 \\
	0 & D
	\end{pmatrix},
	\]
	we may assume without loss of generality that $A=\diag(a_1,\dots,a_n)$
	and $B = \diag(b_1,\dots,b_m)$.
	
	We begin by showing that any completion $M(\bfx)$ has rank at least $n+\esd(A,B)$.
	In the case that $\esd(A,B) = 0$, this is implied by the fact that $A$ is a rank-$n$ submatrix of $M(\bfx)$.
	So assume without loss of generality that $0 < \esd(A,B) = |p_A-p_B|$.
	If $\esd(A,B) = p_B-p_A$, then $M(\bfx)$ has a principal submatrix $M'(\bfx)$ of a $2\times 2$ block form
	whose off-diagonal blocks are all indeterminates, whose upper-left block is an $n_A\times n_A$ diagonal matrix
	whose nonzero entries are the negative diagonals of $A$,
	and whose lower-right block is a $p_B\times p_B$ diagonal matrix whose nonzero entries are the positive diagonals of $B$.
	Proposition \ref{prop:KnKmfullrank} implies that any completion of $M'$ has rank $n_A + p_B$.
	But in this situation, $n_A + p_B = p_A + n_A + (p_B-p_A) = n+\esd(A,B)$.
	If $\esd(A,B) = p_A-p_B$, then $p_A-p_B \le n_B-n_A$
	(note that here we are using that $p_A-p_B$ and $n_A-n_B$ have opposite signs by definition of $\esd$).
	This inequality cannot be strict, since otherwise it would contradict $n = p_A + n_A \ge p_B+n_B = m$.
	So $\esd(A,B) = n_B-n_A$ and so we can proceed just as in the case where $\esd(A,B) = p_B-p_A$.
	
	Now we show that we can complete $M$ to rank $n+\esd(A,B)$.
	Letting $X$ denote the upper-right block of $M(\bfx)$,
	we proceed by choosing $\bfx$ in a way such that
	\[
		\rank(B-X^TA^{-1}X) = \esd(A,B).
	\]
	This suffices because $\rank(M(\bfx)) = \rank(A) + \rank(B-X^TA^{-1}X)$ by the rank formula for Schur complements.
	Let $s$ be the maximum number such that $a_ib_i >0$ for all $i \le s$,
	and assume that the ordering of $(a_1,\dots,a_n)$ and $(b_1,\dots,b_m)$ are
	chosen to maximize $s$.
	Note that $s \le m$ and that $\esd(A,B) = m-s$.
	The $ij$th entry of $X^TA^{-1}X$ is $\sum_{k=1}^n\frac{x_{ki}x_{kj}}{a_k}$.
	Therefore, if we set $x_{kk} = \sqrt{\frac{b_k}{a_k}}$ for $1 \le k \le s$
	and all other $x_{kl} = 0$,
	$B-X^T A^{-1} X$ is a diagonal matrix with precisely $\esd(A,B)$ nonzero entries
	and thus has rank $\esd(A,B)$.
\end{proof}

\begin{cor}\label{cor:typicalRankKnKm}
	The typical ranks of $K_n^\circ\sqcup K_m^\circ$ are $\max\{n,m\},\dots, n+m$.
\end{cor}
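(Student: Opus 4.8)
The plan is to extract the typical ranks directly from Theorem~\ref{thm:esdArgument-disjointcliques}. Assume without loss of generality that $n\ge m$, so $\max\{n,m\}=n$. For a $G$-partial matrix $M=\begin{pmatrix}A&*\\ *&B\end{pmatrix}$ in which $A$ and $B$ are both full rank, Theorem~\ref{thm:esdArgument-disjointcliques} says that $M$ is minimally completable to rank exactly $n+\esd(A,B)$. Since a generic $G$-partial matrix has $A$ and $B$ of full rank, the task reduces to determining which values $n+\esd(A,B)$ occur on a full-dimensional (equivalently, Euclidean-open) set of pairs $(A,B)$, and to checking that no other value can be a typical rank.

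The first step is to note that $\esd(A,B)$ depends only on the inertias $(p_A,n_A)$ and $(p_B,n_B)$, and that for each admissible pair of inertias the set of full-rank $(A,B)\in\mathcal{S}^n(\rr)\times\mathcal{S}^m(\rr)$ realizing it is open: eigenvalues vary continuously and none vanish on that stratum, so the signature is locally constant. Hence each value of $\esd$ that is attained is attained on an open set, and by Theorem~\ref{thm:esdArgument-disjointcliques} the corresponding integer $n+\esd(A,B)$ is a typical rank of $G$.

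The second step is the combinatorial computation of the range of $\esd(A,B)$ as $(p_A,n_A)$ runs over pairs summing to $n$ and $(p_B,n_B)$ over pairs summing to $m$. Writing $p_A-p_B=(n-m)+(n_B-n_A)$, one checks in the case where $p_A-p_B$ and $n_A-n_B$ have opposite signs, using $n\ge m$, that $\esd(A,B)\le m$, with equality when $A$ is positive definite and $B$ negative definite (consistent with Proposition~\ref{prop:KnKmfullrank}, since then $M$ is minimally completable to rank $n+m$). Moreover every intermediate value is attained: taking $A=I_n$ and $B=\diag(1,\dots,1,-1,\dots,-1)$ with $k$ entries equal to $-1$ gives $\esd(A,B)=k$ for $k=0,1,\dots,m$. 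Therefore $\{\,n+\esd(A,B)\,\}=\{n,n+1,\dots,n+m\}=\{\max\{n,m\},\dots,n+m\}$, and each of these is a typical rank.

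Finally I would rule out any other typical rank: the locus where $A$ or $B$ is singular is a proper algebraic subset of $\rr^E$ and hence not full-dimensional, so it supports no open set of $G$-partial matrices; on its complement the minimum completion rank is one of the values just computed. (Alternatively one can cite Proposition~\ref{prop:typicalrankProperties}: $G$ is full-rank typical by Proposition~\ref{prop:KnKmfullrank}, so $n+m$ is the maximum typical rank, and $\max\{n,m\}$ is the minimum typical rank since $A$ or $B$ is a generically-full-rank principal submatrix forcing rank at least $\max\{n,m\}$, which is attained on an open set by the $\esd=0$ case; the intermediate-value statement of Proposition~\ref{prop:typicalrankProperties} then fills in the rest.) The only non-formal ingredient is the short case analysis bounding $\esd$; everything else is bookkeeping with openness of signature strata, so I expect that case analysis to be the main point requiring care.
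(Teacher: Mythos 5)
Your proposal is correct and follows the same route the paper intends: the corollary is stated as an immediate consequence of Theorem~\ref{thm:esdArgument-disjointcliques}, with the typical ranks read off from the possible values of $\esd(A,B)$ on the (open) inertia strata of full-rank pairs $(A,B)$. Your explicit computation that $\esd$ attains exactly the values $0,\dots,\min\{n,m\}$ on open sets, and that the singular locus carries no open set, is just a careful spelling-out of what the paper leaves implicit.
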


\subsection{The space of G-partial matrices}
	In this subsection, we consider the following question: given a full-rank typical graph $G$ and a generic $G$-partial matrix $M$,
	when are all completions of $M$ full-rank?
	Theorem \ref{thm:whenFullRank} gives a complete answer to this question.
	It is more or less a direct consequence of Lemma \ref{lemma:oneMissingEdge} ,
	which handles the case where $G$ is obtained from the complete semisimple graph
	by removing a single non-loop edge.

	Given a (partial) matrix $M$ and subsets $S,T$ of the row and column indices,
	we let $M_{S,T}$ denote the (partial) matrix obtained by \emph{removing} the rows corresponding to the elements of $S$ and the columns corresponding
	to the elements of $T$.


	\begin{lem}\label{lemma:oneMissingEdge}
	    Let $M$ be a real partial symmetric $n\times n$ matrix where the $(1,n)$-entry is the only unknown.
	    Then $M$ can be completed to rank $n-1$ or less if and only if
	    \begin{enumerate}
	    	\item $n = 2$ and $\det(M_{1,1})\det(M_{n,n}) \ge 0$, or
	    	\item $\det(M_{1n,1n})\neq 0$ and $\det(M_{1,1})\det(M_{n,n}) \ge 0$, or
	        \item $\det(M_{1n,1n}) = 0$, and $\det(M(0)_{1,n}) \neq 0$ or $\det(M(0)) = 0$.
	    \end{enumerate}
	\end{lem}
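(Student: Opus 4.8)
The plan is to treat $\det(M(x))$ as a polynomial in the single indeterminate $x = x_{1n}$ and to determine exactly when it has a real root, since $M$ is completable to rank $\le n-1$ precisely when $\det(M(x))$ vanishes for some real $x$ (the only unknown entry sits in a single off-diagonal position $(1,n)$, and symmetric completion forces the $(n,1)$-entry to agree). First I would expand $\det(M(x))$ along the first row or use cofactor bookkeeping to see that it is a quadratic in $x$: the coefficient of $x^2$ is $-\det(M_{1n,1n})$ (the principal minor obtained by deleting rows and columns $1$ and $n$), the coefficient of $x^1$ is $\pm 2\det(M(0)_{1,n})$ up to sign, and the constant term is $\det(M(0))$. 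So write $\det(M(x)) = -\det(M_{1n,1n})\,x^2 + (\text{linear})\,x + \det(M(0))$.

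The case analysis then follows the degree of this quadratic. When $\det(M_{1n,1n}) = 0$ the polynomial is affine (or constant) in $x$: it has a real root iff its linear coefficient is nonzero or its constant term vanishes, i.e.\ iff $\det(M(0)_{1,n}) \ne 0$ or $\det(M(0)) = 0$, which is case (3). When $\det(M_{1n,1n}) \ne 0$ the polynomial is a genuine quadratic $q(x) = -\det(M_{1n,1n})\,x^2 + bx + c$, and it has a real root iff its discriminant $b^2 + 4\det(M_{1n,1n})\,c \ge 0$. The content of the lemma in cases (1) and (2) is the claim that this discriminant condition is equivalent to $\det(M_{1,1})\det(M_{n,n}) \ge 0$. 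The key identity here is a Desnanot--Jacobi (Dodgson condensation) type relation connecting the four minors $\det(M(x))$, $\det(M(x)_{1,1})$, $\det(M(x)_{n,n})$, $\det(M(x)_{1n,1n})$; for a symmetric matrix this takes the form
\[
	\det(M(x))\,\det(M(x)_{1n,1n}) = \det(M(x)_{1,1})\,\det(M(x)_{n,n}) - \det(M(x)_{n,1})^2 .
\]
Since $M_{1,1}$, $M_{n,n}$, and $M_{1n,1n}$ do not involve the unknown entry $x$, this becomes $\det(M(x))\det(M_{1n,1n}) = \det(M_{1,1})\det(M_{n,n}) - (\text{linear in }x)^2$. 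Reading off the leading coefficient and matching against $q(x)\det(M_{1n,1n})$, I expect the discriminant of $q$ to come out equal (up to a positive factor) to $\det(M_{1,1})\det(M_{n,n})$, which gives cases (1)--(2) at once; the $n=2$ split in case (1) is just the degenerate situation where $M_{1n,1n}$ is the empty matrix with $\det = 1$, so that the identity still reads $\det(M(x)) = \det(M_{1,1})\det(M_{n,n}) - x^2$ and a real root exists iff $\det(M_{1,1})\det(M_{n,n}) \ge 0$.

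The main obstacle I anticipate is bookkeeping the signs correctly in the Desnanot--Jacobi identity and in the cofactor expansion — in particular making sure the $(-1)$ factors from deleting rows/columns $1$ and $n$ line up so that the discriminant of $q$ is genuinely a positive multiple of $\det(M_{1,1})\det(M_{n,n})$ rather than its negative, and handling the genericity hypothesis so that degenerate coincidences (e.g.\ $\det(M(0)_{1,n})$ and $\det(M(0))$ both accidentally zero) are either impossible for generic $M$ or else covered by case (3) as stated. A secondary check is that when $\det(M_{1n,1n}) \ne 0$ but $M$ is otherwise generic, the quadratic $q$ is not identically zero, so "has a real root" is the correct characterization of completability to rank $\le n-1$; and conversely that a real root of $q$ really does force $\rank M(x) \le n-1$ rather than exactly $n-1$, which is fine since the lemma only claims "$n-1$ or less."
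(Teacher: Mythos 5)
Your proposal is correct and follows essentially the same route as the paper: write $\det(M(x))$ as a quadratic $-\det(M_{1n,1n})x^2 + 2\det(M(0)_{1,n})x + \det(M(0))$, split on whether the leading coefficient vanishes, and identify the discriminant with $4\det(M_{1,1})\det(M_{n,n})$ via the Desnanot--Jacobi relation. The only difference is organizational: the paper isolates that determinantal identity as a separate lemma (Lemma \ref{lem:relationAmongDeterminants}, proved via a Grassmann--Pl\"ucker relation) and applies it at $x=0$, which settles the sign bookkeeping you flagged; the genericity worry is moot since the lemma holds for arbitrary real $M$.
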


	Before proving Lemma \ref{lemma:oneMissingEdge}, we need a lemma about relations among determinants of arbitrary square matrices.

	\begin{lem}\label{lem:relationAmongDeterminants}
	    Let $A$ be a square, not necessarily symmetric, $n\times n$ matrix. Then
	    \[
	    	\det(A)\det(A_{1n,1n})-\det(A_{1,1})\det(A_{n,n})+\det(A_{1,n})\det(A_{n,1}) = 0.
	    \]
	\end{lem}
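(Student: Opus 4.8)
The plan is to recognize this as the classical Desnanot--Jacobi (or Lewis Carroll) determinant identity and prove it by examining the $2\times 2$ block of $A^{-1}$ indexed by $\{1,n\}$. Throughout I will assume $n\ge 2$, which is harmless here: for $n=1$ the left-hand side is not identically zero, but that degenerate case never occurs in the applications, where $(1,n)$ is genuinely an off-diagonal position. First I would note that both sides of the claimed equation are polynomial functions of the $n^2$ entries of $A$, so it suffices to verify the identity on the Zariski-dense set of matrices for which both $A$ and its central submatrix $A_{1n,1n}$ are invertible (this set is nonempty---for instance it contains the identity matrix). So from now on assume $A$ and $A_{1n,1n}$ are invertible.

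Next I would partition $[n]=I\sqcup J$ with $I=\{2,\dots,n-1\}$ and $J=\{1,n\}$, and write $A$ in block form with respect to this (interlaced, not contiguous) partition. Conjugating by a permutation matrix that moves the two indices of $J$ to the last two coordinates makes the usual Schur-complement formulas directly applicable, and since this permutation acts identically on rows and on columns it changes none of the minors appearing in the identity. The $(I,I)$-block of $A$ is exactly $A_{1n,1n}$, and the $(J,J)$-block consists of the entries $a_{11},a_{1n},a_{n1},a_{nn}$. Letting $S$ denote the Schur complement of the $(I,I)$-block---a $2\times 2$ matrix---the multiplicativity of the determinant over Schur complements gives $\det A=\det(A_{1n,1n})\det(S)$, and the block-inverse formula identifies the $2\times 2$ submatrix of $A^{-1}$ on rows and columns $\{1,n\}$ with $S^{-1}$. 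Hence
\[
  \det\begin{pmatrix}(A^{-1})_{11}&(A^{-1})_{1n}\\(A^{-1})_{n1}&(A^{-1})_{nn}\end{pmatrix}
  =\det(S^{-1})=\frac{\det(A_{1n,1n})}{\det A}.
\]

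The last step is to expand the left-hand side using the cofactor formula $(A^{-1})_{ij}=(-1)^{i+j}\det(A_{j,i})/\det A$. This gives $(A^{-1})_{11}=\det(A_{1,1})/\det A$, $(A^{-1})_{nn}=\det(A_{n,n})/\det A$, $(A^{-1})_{1n}=(-1)^{1+n}\det(A_{n,1})/\det A$, and $(A^{-1})_{n1}=(-1)^{n+1}\det(A_{1,n})/\det A$; the two powers of $-1$ multiply to $1$, so the $2\times 2$ determinant above equals $\big(\det(A_{1,1})\det(A_{n,n})-\det(A_{1,n})\det(A_{n,1})\big)/(\det A)^2$. Equating this with $\det(A_{1n,1n})/\det A$ and clearing the common factor $(\det A)^2$ yields exactly
\[
  \det(A)\det(A_{1n,1n})-\det(A_{1,1})\det(A_{n,n})+\det(A_{1,n})\det(A_{n,1})=0.
\]
There is no real obstacle here---the argument is routine linear algebra---so the only thing to be careful about is the sign bookkeeping in the cofactor expansion (the signs cancel, as just noted) and the justification that the Schur-complement and block-inverse identities, usually stated for contiguous blocks, remain valid for the interlaced index pair $J=\{1,n\}$, which is exactly what the permutation conjugation provides.
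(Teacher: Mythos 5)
Your proof is correct, but it follows a genuinely different route from the paper. You identify the statement as the Desnanot--Jacobi identity and prove it by reducing, via Zariski density, to the case where $A$ and $A_{1n,1n}$ are invertible, then combining the Schur-complement factorization $\det A = \det(A_{1n,1n})\det(S)$ with the block-inverse fact that the $\{1,n\}\times\{1,n\}$ submatrix of $A^{-1}$ is $S^{-1}$, and finally expanding that $2\times 2$ block by the adjugate formula; the sign bookkeeping $(-1)^{1+n}(-1)^{n+1}=1$ is handled correctly, and your permutation-conjugation step is sound because the only quantities you push through it are $\det A$, $\det(A_{1n,1n})$, and the $2\times2$ block of $A^{-1}$, all of which are invariant under a simultaneous row/column permutation (the individual minors $\det(A_{1,n})$, $\det(A_{n,1})$ can change sign under the relabeling, but you only ever compute them via cofactors of the original $A$, so nothing is affected). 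The paper instead proves the identity purely as a polynomial identity: it fixes the middle $n-2$ columns of $A$, observes that the expression is a bilinear alternating function of the first and last columns, reduces to evaluating on pairs of standard basis vectors, and recognizes the result as a three-term Grassmann--Pl\"ucker relation among the maximal minors of an $n\times(n-2)$ matrix. Your argument buys elementarity (only Schur complements, cofactors, and a density argument, no reference to Pl\"ucker relations) at the cost of an invertibility reduction; the paper's argument works uniformly with no genericity step and makes the connection to Pl\"ucker relations explicit, which is in the spirit of its citation. Your caveat that $n\ge 2$ is needed is well taken (the paper leaves this implicit), and it is indeed harmless for the application in Lemma \ref{lemma:oneMissingEdge}.
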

	\begin{proof}
		Define $f: \rr^{n\times n}\rightarrow \rr$ by
		\[
			f(A) = \det(A)\det(A_{1n,1n})-\det(A_{1,1})\det(A_{n,n})+\det(A_{1,n})\det(A_{n,1}).
		\]
		Our goal is to show that $f$ is identically zero.
	    Let $B$ be the $n\times(n-2)$ matrix obtained from $A$ by removing the first and last columns.
	    Let $g:\rr^n\times \rr^n\rightarrow \rr$ be the function given by
	    \[
	    	g(x,y) = f\left(\left(x \ B \ y\right)\right)
	    \]
	    where $\left(x \ B \ y\right)$ denotes the matrix obtained by adding $x$ and $y$ as columns to $B$ on either side.
	    We will proceed by showing that $g$ is identically zero.
	    Note that $g$ is bilinear and alternating,
	    so it is enough to show that $g(e_i,e_j) = 0$ where $e_i$ denotes the $i^{\rm th}$ standard basis vector and $i < j$.
	    Writing $g(e_i,e_j)$ out explicitly, we get
	    \[
	    	g(e_i,e_j) = (-1)^{i+j+n}\left(
	    		\det(B_{ij,\emptyset})\det(B_{1n,\emptyset})
	    		-\det(B_{1j,\emptyset})\det(B_{in,\emptyset})
	    		+\det(B_{1i,\emptyset})\det(B_{jn,\emptyset})
	    	\right).
	    \]
	    The above is a Grassmann-Pl\"ucker relation, so it is identically zero \cite[Chapter 4.3]{maclagan2015introduction}.
	\end{proof}

	\begin{proof}[Proof of Lemma \ref{lemma:oneMissingEdge}]
		We write $\det(M(m_{1,n}))$ as a quadratic polynomial in $m_{1,n}$ as follows
		\[
			\det(M(m_{1,n})) = -\det(M_{1n,1n})m_{1,n}^2 + 2\det(M(0)_{1,n})m_{1,n} + \det(M(0)).
		\]
		If $\det(M_{1n,1n}) = 0$, this is a linear or constant polynomial.
		It has a zero, which is real, if and only if $\det(M(0)_{1,n}) \neq 0$ or $\det(M(0)) = 0$.

		If $\det(M_{1n,1n}) \neq 0$, then $\det(M(m_{1,n}))$ has a real zero if and only if its discriminant is nonnegative.
		The discriminant of $\det(M(m_{1,n}))$ is
		\[
			4\det(M(0)_{1,n})^2+4\det(M_{1n,1n})\det(M(0)).
		\]
		Lemma \ref{lem:relationAmongDeterminants} implies that the following polynomial is identically zero
		\[
\det(M(0)_{1,n})^2+\det(M_{1n,1n})\det(M(0))-\det(M_{1,1})\det(M_{n,n}).
\]
		Therefore the discriminant of $\det(M(m_{1,n}))$ is $\det(M_{1,1})\det(M_{n,n})$.
	\end{proof}

\begin{defn}
	Let $A$ be a symmetric matrix.
	Define $p_A,n_A$ be the number, counted with multiplicity,
	of positive and negative eigenvalues of $A$.
    The \emph{inertia} of $A$
    is the vector
    \[
    	\In(A) := (p_A,n_A,\dim \ker A).
    \]
\end{defn}

\begin{prop}\label{prop:full-rankInertia}
	Let $G$ be any full-rank typical semisimple graph, and $M$ be a real $G$-partial matrix.
	If $M$ is minimally completable to full-rank,
	then all completions of $M$ have the same inertia.
\end{prop}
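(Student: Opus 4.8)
The plan is to run a short connectedness-and-continuity argument, with the only real work being the correct unwinding of the hypothesis. First I would observe what ``minimally completable to full rank'' buys us: by definition no completion $M(\bfx)$ has rank below $n$, and an $n\times n$ matrix cannot have rank above $n$, so \emph{every} completion $M(\bfx)$ is invertible. Here $\bfx$ ranges over $\rr^N$, where $N$ is the number of missing entries of $M$ (one real parameter for each non-loop non-edge of $G$ and one for each unlooped vertex); crucially, the parameter space of completions is the convex, hence path-connected, set $\rr^N$, with no exceptional locus excised.

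Next I would invoke continuity of eigenvalues of real symmetric matrices: the sorted eigenvalues $\lambda_1(S)\le\cdots\le\lambda_n(S)$ are continuous functions of $S$, and $\bfx\mapsto M(\bfx)$ is affine, hence continuous, so each $\bfx\mapsto\lambda_i(M(\bfx))$ is continuous on $\rr^N$. Given two completions $M(\bfx_0)$ and $M(\bfx_1)$, I would join $\bfx_0$ to $\bfx_1$ by the segment $\bfx(t)=(1-t)\bfx_0+t\bfx_1$, $t\in[0,1]$. Along this segment each $\lambda_i(M(\bfx(t)))$ is continuous and, by the first paragraph, never $0$. By the intermediate value theorem, $\lambda_i(M(\bfx(t)))$ keeps a constant sign for each $i$, so $M(\bfx_0)$ and $M(\bfx_1)$ have the same number of positive and the same number of negative eigenvalues (counted with multiplicity). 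Since every completion has trivial kernel, this yields $\In(M(\bfx_0))=\In(M(\bfx_1))$, and since $\bfx_0,\bfx_1$ were arbitrary, all completions of $M$ share a single inertia.

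I do not expect a genuine obstacle here; the point that needs the most care is the reduction in the first paragraph, namely that minimal completability to full rank forces \emph{all} completions (not merely a generic one) to be invertible, which is what makes the invariance argument apply over all of $\rr^N$. I would also want to state precisely the continuity fact being used — equivalently, that among invertible symmetric matrices the locus of a fixed inertia is open — so that the intermediate value step is fully justified. Note that full-rank typicality of $G$ is not used in the argument itself; it only guarantees that partial matrices $M$ satisfying the hypothesis exist.
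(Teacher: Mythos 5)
Your argument is correct and is essentially the paper's proof, just phrased directly instead of contrapositively: both reduce to the fact that every completion is invertible (since the minimal completion rank is $n$) and then apply continuity of the eigenvalues along the line segment joining two completions, so no eigenvalue can change sign. The extra care you take in spelling out the reduction and the IVT step is fine but adds nothing beyond the paper's two-line argument.
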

\begin{proof}
	Let $M(\bfx_1)$ and $M(\bfx_2)$ be completions of $M$ such that $\In(M(\bfx_1)) \ne \In(M(\bfx_2))$.
	By continuity of the function sending a matrix to its eigenvalues,
	there exists a point $\bfx_0$ on the line segment from $\bfx_1$ to $\bfx_2$
	such that $M(\bfx_0)$ has a zero eigenvalue,
	i.e.~is rank deficient.
\end{proof}

\begin{defn}
    Let $G$ be a full-rank typical graph and let $M$ be a $G$-partial matrix.
    We define $\sign(M) := 0$ if $\det(M(\bfx)) = 0$ for some choice of real $\bfx$,
    and otherwise, define $\sign(M)$ to be the sign of $\det(M(0))$ (which is the sign of $\det(M(\bfx))$ for any $\bfx$
    by Proposition \ref{prop:full-rankInertia}).
\end{defn}

\begin{rmk}\label{rmk:vertexDeletion}
    Let $G$ be a full-rank typical graph.
    Then it follows from Theorem~\ref{thm:fullranktypicalCharacterization}
    that any subgraph obtained from $G$ by deleting vertices is also full rank typical.
    In particular, if $M$ is a $G$-partial matrix and $A$ is a principal minor of $M$,
    then $\sign(A)$ is well-defined.
\end{rmk}

\begin{lem}\label{lem:recursionToTestFullRank}
    Let $G = ([n],E)$ be a full-rank typical graph, let $M$ be a generic $G$-partial matrix,
    and let $\{i,j\}$ be a non-edge of $G$.
    Then $M(\bfx)$ is full-rank for all $\bfx$ if and only if
    $n=2$ or $\sign(M_{ij,ij}) \neq 0$, and $\sign(M_{i,i})$ and $\sign(M_{j,j})$ are nonzero and opposite.
\end{lem}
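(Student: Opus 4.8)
The plan is to freeze every missing entry except $m_{ij}$, reducing to the one‑unknown situation analyzed in Lemma~\ref{lemma:oneMissingEdge} (equivalently, re‑running its quadratic computation with the other unknowns kept as parameters), and then to rephrase the resulting determinantal conditions in terms of $\sign$ of principal submatrices, using Lemma~\ref{lem:relationAmongDeterminants} and Proposition~\ref{prop:full-rankInertia}. The case $n=2$ is handled directly: then $M_{ij,ij}$ is empty and $M_{i,i},M_{j,j}$ are the two diagonal scalars, which are nonzero since $M$ is generic, and $\left(\begin{smallmatrix} m_{ii}&m_{ij}\\ m_{ij}&m_{jj}\end{smallmatrix}\right)$ is singular for some real $m_{ij}$ precisely when $m_{ii}m_{jj}\ge 0$, i.e.\ when $\sign(M_{i,i})=\sign(M_{j,j})$; so the criterion is correct.

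Now suppose $n\ge 3$ and relabel so that $\{i,j\}=\{1,n\}$. Write the missing entries as $(m_{1n},\bfy)$, where $\bfy$ denotes all of them except $m_{1n}$, and let $M(0;\bfy)$ be the completion determined by $\bfy$ together with $m_{1n}=0$. Cofactor expansion gives
\[
\det M(m_{1n},\bfy)=-\det\!\big(M_{1n,1n}(\bfy)\big)\,m_{1n}^{2}+2\det\!\big(M(0;\bfy)_{1,n}\big)\,m_{1n}+\det\!\big(M(0;\bfy)\big),
\]
and by Lemma~\ref{lem:relationAmongDeterminants} the discriminant of this quadratic in $m_{1n}$ equals $4\det(M_{1,1}(\bfy))\det(M_{n,n}(\bfy))$. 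Consequently $M$ is completable only to full rank if and only if for \emph{every} real $\bfy$ this quadratic has no real root, that is, if and only if for every $\bfy$ either (a) $\det(M_{1n,1n}(\bfy))\ne 0$ and $\det(M_{1,1}(\bfy))\det(M_{n,n}(\bfy))<0$, or (b) $\det(M_{1n,1n}(\bfy))=0$, $\det(M(0;\bfy)_{1,n})=0$ and $\det(M(0;\bfy))\ne 0$. By Remark~\ref{rmk:vertexDeletion} each of $M_{1,1},M_{n,n},M_{1n,1n}$ is a generic partial matrix for a full-rank typical graph, so its $\sign$ is defined, and by Proposition~\ref{prop:full-rankInertia}, if any of them is completable only to full rank then all of its completions have determinant of one fixed nonzero sign, equal to its $\sign$.

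The ``if'' direction is then immediate: if $\sign(M_{1n,1n})\ne 0$ then $\det(M_{1n,1n}(\bfy))\ne 0$ for all $\bfy$, and if $\sign(M_{1,1}),\sign(M_{n,n})$ are nonzero and opposite then $\det(M_{1,1}(\bfy))\det(M_{n,n}(\bfy))<0$ for all $\bfy$, so (a) holds for every $\bfy$. For the ``only if'' direction I would proceed in two steps. First, rule out $\sign(M_{1n,1n})=0$: if it vanished, the real set $Z:=\{\bfy:\det(M_{1n,1n}(\bfy))=0\}$ would be nonempty and proper ($\det(M_{1n,1n}(\cdot))\not\equiv 0$ because $M_{1n,1n}$ has full-rank completions, $n-2$ being a typical rank of its graph), and on $Z$ alternative (a) fails, forcing $\det(M(0;\bfy)_{1,n})=0$ throughout $Z$; but for a generic $M$ the polynomials $\det(M_{1n,1n}(\cdot))$ and $\det(M(0;\cdot)_{1,n})$ in the variables $\bfy$ have no common factor, so the second cannot vanish on the whole real zero locus of the first — a contradiction. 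Second, with $\sign(M_{1n,1n})\ne 0$ established, $\det(M_{1n,1n}(\bfy))\ne 0$ for all $\bfy$, so (b) never occurs and (a) must hold for every $\bfy$; thus $\det(M_{1,1}(\bfy))$ and $\det(M_{n,n}(\bfy))$ are never zero and have opposite signs for all $\bfy$, so $M_{1,1}$ and $M_{n,n}$ are completable only to full rank, $\sign(M_{1,1}),\sign(M_{n,n})$ are nonzero, and (being the constant signs of those determinants, whose product is negative) they are opposite.

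The main obstacle is the genericity input in the first step: showing that for a generic $G$-partial matrix the two polynomials $\det(M_{ij,ij}(\cdot))$ and $\det(M(0;\cdot)_{i,j})$ in the free entries have no common factor — or, more precisely, that the latter does not vanish on the entire real zero set of the former. I expect this to be handled either by exhibiting a single choice of specified entries for which the two polynomials are coprime (coprimality being a Zariski-open condition on those entries), or by a direct comparison of the two polynomials, e.g.\ of their leading coefficients in one suitably chosen free variable. Everything else reduces to the quadratic formula, Lemma~\ref{lem:relationAmongDeterminants}, and Proposition~\ref{prop:full-rankInertia}.
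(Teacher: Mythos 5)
Your reduction to the one-unknown quadratic, the discriminant identity via Lemma~\ref{lem:relationAmongDeterminants}, and the ``if'' direction all match the paper's route and are fine. The genuine gap is exactly the step you flag yourself: ruling out $\sign(M_{ij,ij})=0$ in the ``only if'' direction, i.e.\ showing that if some completion makes $M_{ij,ij}$ singular then $M$ has a rank-deficient completion. Your proposed repair --- prove the polynomials $\det(M_{ij,ij}(\cdot))$ and $\det(M(0;\cdot)_{i,j})$ are coprime --- does not close it: coprimality of two real polynomials does not prevent one from vanishing on the entire \emph{real} zero set of the other (compare $x^2+y^2$ and $x$). You would additionally need the real zero locus $Z$ of $\det(M_{ij,ij}(\cdot))$ to be Zariski dense in a component of its complex hypersurface (e.g.\ a sign change at a smooth real point), and establishing that for generic specified entries is essentially the hard content of this case, not a routine Zariski-openness remark.

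The paper avoids this real-density issue entirely by working inside $Z$ with an explicit perturbation. Take a completion $\bfx$ with $M(\bfx)_{ij,ij}$ singular; if its corank is $2$, a generic perturbation of a single free entry brings it to corank $1$, and if the corank is at least $3$ then $\rank M(\bfx)\le n-1$ already. With corank exactly $1$, perturb the free entries lying in rows $i$ and $j$ --- these do not occur in $\det(M_{ij,ij})$, so the leading coefficient of the quadratic stays $0$ --- and since a generic vector avoids the span of the rows (columns) of the corank-$1$ block, this makes $\det(M(0)_{i,j})\neq 0$. Then case (3) of Lemma~\ref{lemma:oneMissingEdge} yields a completion of rank at most $n-1$, which is the contradiction you were after. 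If you want to keep your algebraic formulation, this is the substitute you need for the coprimality claim; as written, your argument for this case is incomplete.
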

\begin{proof}
	Without loss of generality, let $\{i,j\} = \{1,n\}$.
	Assume $n = 2$ or $\sign(M_{1n,1n}) \neq 0$.
    If one of $\sign(M_{1,1})$ or $\sign(M_{n,n})$ is zero,
    then Lemma~\ref{lemma:oneMissingEdge} implies that $M$ can be completed to rank $n-1$ or less.
    If not, then the values of $\sign(M_{1,1})$ and $\sign(M_{n,n})$
    do not depend on how we complete the non-$(1,n)$ entries.
    Lemma~\ref{lemma:oneMissingEdge} then implies that $M$ can be completed
    to rank $n-1$ or less if and only if $\sign(M_{1,1})=\sign(M_{n,n})$.

	Now assume $n \ge 3$ and $\sign(M_{1n,1n}) = 0$.
	We show that $M$ can be completed to rank $n-1$ or less.
	Assume there exists a completion $M(\bfx)$ of $M$ such that $\rank(M(\bfx)_{1n,1n}) = n-3$.
	Let $\mathbf{y}(t)$ be obtained from $\bfx$ by perturbing each entry in the first and $n$th row
	and replacing the $(1,n)$ entry with the indeterminate $t$.
	Then $M(\bfy(t))$ is a partial matrix whose only unknown entry is $(1,n)$
	and $\det(M(\bfy(t))_{1n,1n}) = 0$.
	Since a generic row (respectively column) vector of size $n-2$ will not lie in the row span (column span) of $M(\bfx)_{1n,1n}$,
	$\det(M(\bfy(0))_{1,n}) \neq 0$.
	Lemma~\ref{lemma:oneMissingEdge} then implies that $M(\bfy(t))$, and therefore $M$, has a completion to rank $n-1$.
	If there exists a completion $M(\bfx)$ of $M$ such that $\rank(M(\bfx)_{1n,1n})=n-4$, then there exists $\bfx'$ obtained from $\bfx$
	via a generic perturbation of a single entry such that $\rank(M(\bfx')_{1n,1n})=n-3$.
	If there exists a completion $M(\bfx)$ such that $\rank(M(\bfx)_{1n,1n})\le n-5$, then $\rank(M(\bfx)) \le n-1$.
\end{proof}
Given full-rank typical $G$ and a $G$-partial matrix $M$,
Lemma \ref{lem:recursionToTestFullRank} gives us a recursive procedure for
determining whether or not $M$ must be completed to full rank.
We will use the following definition to convert this recursive procedure
into one where we just check the signs of various minors of $M(0)$.

\begin{defn}\label{defn:minorPoset}
    Let $G = (V,E)$ be a full-rank typical graph.
    Let
    \[
    	\mathcal{O} := (\{i_1,j_1\},\dots,\{i_k,j_k\})
    \]
    be an ordering of the non-edges of $G$.
    Initialize $\pi(G,\mathcal{O}):=\{V\}$.
    Iteratively for $l=1,\dots,k$
    and for each inclusion-wise minimal element $S$ of $\pi(G,\mathcal{O})$ such that $\{i_l,j_l\} \subseteq S$,
    add $S\setminus\{i_l\}$ and $S\setminus\{j_l\}$ to $\pi(G,\mathcal{O})$.
    Note that if we partially order $\pi(G,\mathcal{O})$ by inclusion,
    then every non-minimal $S \in \pi$ covers exactly two elements.
\end{defn}

Given an $n\times n$ symmetric (partial) matrix $M$ and $S \subseteq [n]$,
let $A_S$ denote the principal submatrix of $A$ with rows and columns indexed by $S$.

\begin{thm}\label{thm:whenFullRank}
    Let $G = (V,E)$ be full-rank typical, let $M$ be a generic $G$-partial matrix,
    and let $M(\bfx_0)$ be a generic completion of $M$.
    Let $\mathcal{O}$ be an ordering of the non-edges of $G$.
    Then $M(\bfx)$ is full-rank for all real $\bfx$ if and only if
    whenever $S_1,S_2$ are the elements covered by some $S \in \pi(G,\mathcal{O})$,
    $\sign(M(\bfx_0)_{S_1})$ and $\sign(M(\bfx_0)_{S_2})$ are nonzero and opposite.
\end{thm}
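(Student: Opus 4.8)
The plan is to deduce Theorem~\ref{thm:whenFullRank} from Lemma~\ref{lem:recursionToTestFullRank} by induction on the number $k$ of non-edges of $G$, following the structure of the poset $\pi(G,\mathcal{O})$. First I would observe that Lemma~\ref{lem:recursionToTestFullRank} is exactly the case $k=1$: if $\{i,j\}$ is the unique non-edge, then $\pi(G,\mathcal{O}) = \{V, V\setminus\{i\}, V\setminus\{j\}\}$, the top element $V$ covers $V\setminus\{i\}$ and $V\setminus\{j\}$, and the stated sign condition on $M(\bfx_0)_{V\setminus\{i\}}$ and $M(\bfx_0)_{V\setminus\{j\}}$ (together with the degenerate case $n=2$, where $M_{i,i}$ and $M_{j,j}$ are $1\times 1$ and automatically ``nonzero'' generically) is precisely what the lemma demands, once we note that $\sign(M_{i,i}) = \sign(M(\bfx_0)_{V\setminus\{i\}})$ because deleting row/column $i$ removes the only unknown entry. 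The subtle point even here is that $\sign(M_{ij,ij})\neq 0$ should be read off from $\pi$: in the one-non-edge case $M_{ij,ij} = M_{V\setminus\{i,j\}}$ is fully specified and generic, hence nonsingular, so that hypothesis is automatic; in the inductive step it will be the content of the recursion on the smaller graph.

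For the inductive step, let $\{i_1,j_1\}$ be the first non-edge in $\mathcal{O}$ and think of building $M(\bfx)$ by first plugging in everything except the $(i_1,j_1)$ entry. Applying Lemma~\ref{lemma:oneMissingEdge} (via Lemma~\ref{lem:recursionToTestFullRank} with $\{i,j\}=\{i_1,j_1\}$), $M(\bfx)$ is full rank for all $\bfx$ if and only if $n=2$ or $\sign(M_{i_1 j_1, i_1 j_1})\neq 0$, and $\sign(M_{i_1,i_1})$, $\sign(M_{j_1,j_1})$ are nonzero and opposite. Now $M_{i_1,i_1}$ is a generic $G'$-partial matrix where $G' = G - i_1$ is full-rank typical by Remark~\ref{rmk:vertexDeletion}, with non-edges $\{i_2,j_2\},\dots$ restricted to $V\setminus\{i_1\}$; similarly for $M_{j_1,j_1}$ and $M_{i_1j_1,i_1j_1}$. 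The key combinatorial observation is that $\pi(G',\mathcal{O}')$, where $\mathcal{O}'$ is the induced ordering, is exactly the set of elements of $\pi(G,\mathcal{O})$ contained in $V\setminus\{i_1\}$: the iterative construction in Definition~\ref{defn:minorPoset} treats $S\setminus\{i_1\}$ and $S\setminus\{j_1\}$ as the new minimal elements below $V$ after processing $\{i_1,j_1\}$, and all subsequent splittings happen inside these, never touching $V$ again. Hence ``$\sign(M_{i_1,i_1})\neq 0$'' unwinds by the inductive hypothesis to the sign conditions on all covering pairs within the sub-poset rooted at $V\setminus\{i_1\}$, and likewise for $V\setminus\{j_1\}$ and for $V\setminus\{i_1,j_1\}$ (which handles the $\sign(M_{i_1j_1,i_1j_1})\neq 0$ clause, since the sub-poset below $V\setminus\{i_1,j_1\}$ is shared). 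Combining these three, together with the single covering relation $V \succ V\setminus\{i_1\}, V\setminus\{j_1\}$ giving the oppositeness of $\sign(M(\bfx_0)_{V\setminus\{i_1\}})$ and $\sign(M(\bfx_0)_{V\setminus\{j_1\}})$, yields exactly the condition in the theorem.

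A couple of routine-but-necessary checks fill in the gaps: that $\sign(M_{S,S}) = \sign(M(\bfx_0)_S)$ for $S$ a principal index set, which holds by Proposition~\ref{prop:full-rankInertia} and the definition of $\sign$ applied to the full-rank typical subgraph $G_S$ and the generic completion $M(\bfx_0)$; that genericity of $M$ is preserved under passing to $M_{i,i}$, $M_{j,j}$, $M_{ij,ij}$ (immediate, since these are coordinate projections of a generic point); and that the non-full-rank branch ($n\ge 3$, $\sign(M_{ij,ij})=0$) of Lemma~\ref{lem:recursionToTestFullRank} is correctly reflected, namely that if the sub-poset condition below $V\setminus\{i_1,j_1\}$ fails then $M(\bfx_0)_{V\setminus\{i_1,j_1\}}$ is singular and $M$ is not always full rank. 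The main obstacle, and the place to be careful, is establishing the poset identification in the inductive step precisely: one must verify that after processing the first non-edge, the elements $V\setminus\{i_1\}$ and $V\setminus\{j_1\}$ become the minimal elements covered by $V$ and that the remaining $k-1$ steps of Definition~\ref{defn:minorPoset} applied to $\mathcal{O}$ produce, below each of these, exactly the poset that Definition~\ref{defn:minorPoset} would produce for $G - i_1$ (resp.\ $G - j_1$) with the induced ordering $\mathcal{O}'$ — and in particular that the ``inclusion-wise minimal $S$ with $\{i_l,j_l\}\subseteq S$'' selection rule never reaches back up to $V$ once $V$ has two children. This is a clean bookkeeping argument about the recursive definition, but it is where the whole proof's correctness rests, so I would state it as an explicit sub-lemma before running the induction.
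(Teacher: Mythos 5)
Your plan (peel off the first non-edge, apply Lemma~\ref{lem:recursionToTestFullRank} at the top, and recurse into $M_{i_1,i_1}$, $M_{j_1,j_1}$, $M_{i_1j_1,i_1j_1}$) is organized differently from the paper, which instead works bottom-up through $\pi(G,\mathcal{O})$: it starts at the fully specified minimal elements and uses Lemma~\ref{lem:recursionToTestFullRank} together with Proposition~\ref{prop:full-rankInertia} to propagate $\sign(M_S)=\sign(M_S(\bfx_0))$ up the poset (and, for the converse, propagates $\sign(M_S)=0$ upward). The difference in organization would be acceptable, but the combinatorial sub-lemma on which you explicitly rest your induction is false. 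Take $V=\{1,2,3\}$ with non-edges ordered $\mathcal{O}=(\{1,2\},\{1,3\})$ and $i_1=1$, $j_1=2$: then $\pi(G,\mathcal{O})=\{123,\,23,\,13,\,3,\,1\}$, so the elements contained in $V\setminus\{i_1\}=\{2,3\}$ are $\{2,3\}$ and $\{3\}$, whereas $\pi(G-1,\mathcal{O}')=\{\{2,3\}\}$ since $G-1$ has no non-edges. Later non-edges containing $i_1$ split sets living on the other branch and deposit their $i_1$-free children inside $V\setminus\{i_1\}$, something the branch construction for $G-i_1$ never sees; so ``the set of elements of $\pi(G,\mathcal{O})$ contained in $V\setminus\{i_1\}$ is exactly $\pi(G-i_1,\mathcal{O}')$'' is not true.

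The more serious gap is your treatment of the clause $\sign(M_{i_1j_1,i_1j_1})\neq 0$. The set $V\setminus\{i_1,j_1\}$ need not belong to $\pi(G,\mathcal{O})$ at all: with $G^c$ the perfect matching $\{1,2\},\{3,4\}$ on four vertices and $\mathcal{O}=(\{1,2\},\{3,4\})$, one gets $\pi(G,\mathcal{O})=\{1234,\,234,\,134,\,24,\,23,\,14,\,13\}$, and $\{3,4\}$ is absent, so there is no ``shared sub-poset below $V\setminus\{i_1,j_1\}$.'' Unwinding $\sign(M_{12,12})\neq 0$ by your inductive hypothesis applied to $G-\{1,2\}$ produces the condition that $m_{33}$ and $m_{44}$ have opposite signs, which is not among the theorem's covering-pair conditions; thus your induction proves that forced full rank is equivalent to the theorem's conditions \emph{plus} extra sign conditions, and the ``if'' direction of the theorem does not follow as written. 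The missing idea is that this extra clause is automatic: applying Lemma~\ref{lem:relationAmongDeterminants} to any completion (after relabeling $\{i,j\}$ as $\{1,n\}$, and using symmetry so that $\det(M_{1,n})=\det(M_{n,1})$) gives $\det(M_{1,1})\det(M_{n,n})=\det(M)\det(M_{1n,1n})+\det(M_{1,n})^2$, so if $\sign(M_{i,i})$ and $\sign(M_{j,j})$ are nonzero and opposite then no completion can make $M_{ij,ij}$ singular, i.e.\ $\sign(M_{ij,ij})\neq 0$ comes for free and the third branch of your recursion can be discarded. Without this observation (or a correct replacement for the false poset identification), the proposal does not yield the theorem.
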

\begin{proof}
    Let $S$ be a non-minimal element of $\pi(G,\mathcal{O})$ and let
    $S_1$ and $S_2$ be the elements covered by $S$.
    If $\sign(M_{S_1}(\bfx_0)) = \sign(M_{S_2}(\bfx_0))$,
    then Lemma~\ref{lem:recursionToTestFullRank} implies that $M_S$
    has a completion to less than full rank, i.e.~that $\sign(M_S) = 0$.
    By Lemma~\ref{lem:recursionToTestFullRank}, this implies that $M$ has a completion to less than full rank.

    Now assume $\sign(M_{S_1}(\bfx_0))$ and $\sign(M_{S_2}(\bfx_0))$ are nonzero and opposite whenever $S_1$ and $S_2$
    are the elements covered by some $S \in \pi(G,\mathcal{O})$.
    Whenever $T \in \pi(G,\mathcal{O})$ is minimal, $M_T$ is fully-specified.
    Lemma~\ref{lem:recursionToTestFullRank} therefore implies
    that whenever $S$ covers two minimal elements of $\pi(G,\mathcal{O})$,
    all completions of $M_S$ are full-rank.
    So in this case, Proposition~\ref{prop:full-rankInertia} implies $\sign(M_S) = \sign(M_S(\bfx_0))$.
    It then follows by Lemma~\ref{lem:recursionToTestFullRank} and induction that
    $\sign(M_S) = \sign(M_S(\bfx_0))$ for all $S \in \pi(G,\mathcal{O})$.
    In particular, $\sign(M) = \sign(M_{[n]}) = \sign(\det(M(\bfx_0)))$ is nonzero, i.e.~all completions of $M$
    have rank $n$.
\end{proof}

\begin{ex}
	Figure \ref{fig:posetStructures} shows three examples of $\pi(G,\mathcal{O})$
	alongside the complement graph $G^c$.
	In all cases, $\mathcal{O}$ is the lexicographic ordering of the non-edges of $G$ (i.e.~the edges of $G^c$).
    	\begin{center}
    	\begin{figure}
    		\begin{tikzpicture}[roundnode/.style={circle, fill=black, inner sep=0pt, minimum size=1mm},xscale=.7,yscale=.8]
    		\tikzstyle{every node}=[font=\footnotesize]
    		\node at (-.5,2) {$G^c=$};
    		\node[roundnode,label=left:{\small $1$}](A) at (1,2.6) {};
    		\node[roundnode,label=left:{\small $2$}](A) at (1,2) {};
    		\node[roundnode,label=left:{\small $3$}](C) at (1,1.4) {};
    		\node[roundnode,label=right:{\small $4$}](A) at (2,2.6) {};
			\node[roundnode,label=right:{\small $5$}](B) at (2,2) {};
			\node[roundnode,label=right:{\small $6$}](C) at (2,1.4) {};
   		    		
    		\draw[-] (1,2.6) -- (2,2.6);
    		\draw[-] (1,2.6) -- (2,2);
    		\draw[-] (1,2.6) -- (2,1.4);
    		\draw[-] (1,2) -- (2,2.6);
			\draw[-] (1,2) -- (2,2);
			\draw[-] (1,2) -- (2,1.4);
    		\draw[-] (1,1.4) -- (2,2.6);
			\draw[-] (1,1.4) -- (2,2);
			\draw[-] (1,1.4) -- (2,1.4);

			\node at (8,3) {$123456$};
			\node at (7,2) {$12356$};
			\node at (9,2) {$23456$};
			\node at (6,1) {$1236$};
			\node at (8,1) {$2356$};
			\node at (10,1) {$3456$};
			\node at (5,0) {$123$};
			\node at (7,0) {$236$};
			\node at (9,0) {$356$};
			\node at (11,0) {$456$};
			\node at (6,-1) {$23$};
			\node at (8,-1) {$36$};
			\node at (10,-1) {$56$};
			\node at (7,-2) {$3$};
			\node at (9,-2) {$6$};

    		\draw[-] (7.7,2.7) -- (7.3,2.3);
    		\draw[-] (8.3,2.7) -- (8.7,2.3);
    		\draw[-] (6.7,1.7) -- (6.3,1.3);
    		\draw[-] (7.3,1.7) -- (7.7,1.3);
    		\draw[-] (8.7,1.7) -- (8.3,1.3);
    		\draw[-] (9.3,1.7) -- (9.7,1.3);
    		\draw[-] (5.7,.7) -- (5.3,.3);
    		\draw[-] (6.3,.7) -- (6.7,.3);
    		\draw[-] (7.7,.7) -- (7.3,.3);
    		\draw[-] (8.3,.7) -- (8.7,.3);
    		\draw[-] (9.7,.7) -- (9.3,.3);
    		\draw[-] (10.3,.7) -- (10.7,.3);
    		\draw[-] (6.3,-.7) -- (6.7,-.3);
    		\draw[-] (7.3,-.3) -- (7.7,-.7);
    		\draw[-] (8.3,-.7) -- (8.7,-.3);
    		\draw[-] (9.3,-.3) -- (9.7,-.7);
    		\draw[-] (7.3,-1.7) -- (7.7,-1.3);
    		\draw[-] (8.3,-1.3) -- (8.7,-1.7);

\node at (-6.5,-4) {$G^c=$};
\node[roundnode,label=left:{\small $1$}](A) at (-5,-3.4) {};
\node[roundnode,label=left:{\small $2$}](A) at (-5,-4) {};
\node[roundnode,label=left:{\small $3$}](C) at (-5,-4.6) {};
\node[roundnode,label=right:{\small $6$}](A) at (-4,-3.4) {};
\node[roundnode,label=right:{\small $5$}](B) at (-4,-4) {};
\node[roundnode,label=right:{\small $4$}](C) at (-4,-4.6) {};

\draw[-] (-5,-3.4) -- (-4,-3.4);
\draw[-] (-5,-4) -- (-4,-4);
\draw[-] (-5,-4.6) -- (-4,-4.6);

\node at (1,-3) {$123456$};
\node at (-1,-4) {$12345$};
\node at (3,-4) {$23456$};
\node at (-2,-5.3) {$1234$};
\node at (0,-5.3) {$1345$};
\node at (2,-5.3) {$2346$};
\node at (4,-5.3) {$3456$};
\node at (-2.5,-6.6) {$123$};
\node at (-1.5,-6.6) {$124$};
\node at (-.5,-6.6) {$135$};
\node at (.5,-6.6) {$145$};
\node at (1.5,-6.6) {$236$};
\node at (2.5,-6.6) {$246$};
\node at (3.5,-6.6) {$356$};
\node at (4.5,-6.6) {$456$};

\draw[-] (.7,-3.3) -- (-.7,-3.7);
\draw[-] (1.3,-3.3) -- (2.7,-3.7);
\draw[-] (-1.3,-4.3) -- (-1.7,-5);
\draw[-] (-.7,-4.3) -- (-.3,-5);
\draw[-] (2.7,-4.3) -- (2.3,-5);
\draw[-] (3.3,-4.3) -- (3.7,-5);
\draw[-] (-2.2,-5.6) -- (-2.3,-6.3);
\draw[-] (-1.8,-5.6) -- (-1.7,-6.3);
\draw[-] (-.2,-5.6) -- (-.3,-6.3);
\draw[-] (.2,-5.6) -- (.3,-6.3);
\draw[-] (1.8,-5.6) -- (1.7,-6.3);
\draw[-] (2.2,-5.6) -- (2.3,-6.3);
\draw[-] (3.8,-5.6) -- (3.7,-6.3);
\draw[-] (4.2,-5.6) -- (4.3,-6.3);

\node at (8.5,-4) {$G^c=$};
\node[roundnode,label=left:{\small $1$}](A) at (10,-4) {};
\node[roundnode,label=right:{\small $2$}](A) at (11,-3.4) {};
\node[roundnode,label=right:{\small $3$}](B) at (11,-4) {};
\node[roundnode,label=right:{\small $4$}](C) at (11,-4.6) {};

\draw[-] (10,-4) -- (11,-3.4);
\draw[-] (10,-4) -- (11,-4);
\draw[-] (10,-4) -- (11,-4.6);

\node at (15,-3) {$1234$};
\node at (14,-4) {$134$};
\node at (16,-4) {$234$};
\node at (13,-5) {$14$};
\node at (15,-5) {$34$};
\node at (12,-6) {$1$};
\node at (14,-6) {$4$};

\draw[-] (14.7,-3.3) -- (14.3,-3.7);
\draw[-] (15.3,-3.3) -- (15.7,-3.7);
\draw[-] (13.7,-4.3) -- (13.3,-4.7);
\draw[-] (14.3,-4.3) -- (14.7,-4.7);
\draw[-] (12.7,-5.3) -- (12.3,-5.7);
\draw[-] (13.3,-5.3) -- (13.7,-5.7);

\end{tikzpicture}
    		\caption{Various examples of $\pi(G,\mathcal{O})$ alongside $G^c$. In all cases $\mathcal{O}$ is
    		the lexicographic ordering of the non-edges.} 		
    		\label{fig:posetStructures}
    	\end{figure}
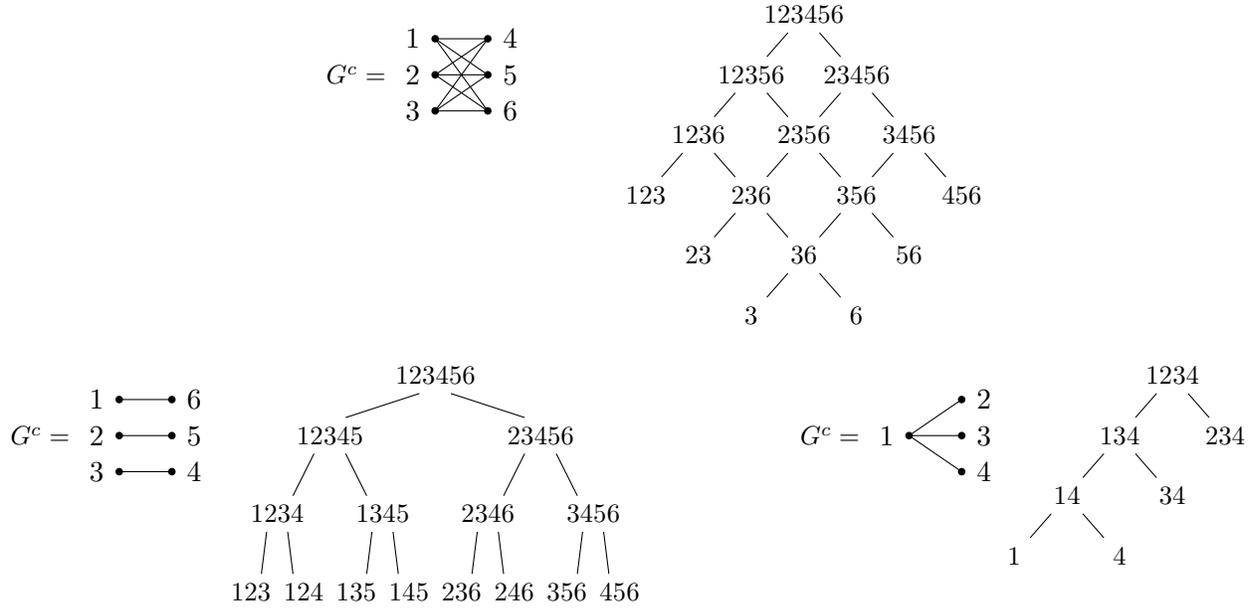
    \end{center}
    
\end{ex}

\subsection{Disjoint unions of full-rank typical graphs}
In this subsection, we study the typical ranks of disjoint unions of full-rank typical graphs.
Proposition \ref{prop:full-rankInertia} allows us to make the following definition.

\begin{defn}\label{defn:esdFullRankTypicalGraphs}
	Let $G$ and $H$ be full-rank typical graphs,
	and let $M$ and $N$ respectively be $G$- and $H$-partial matrices which are minimally completable to full rank.
	Define $\esd(M,N)$ to be $\esd(M(\bfx),N(\bfx))$ for any completion $A(\bfx),B(\bfx)$ of $A$ and $B$.
\end{defn}

\begin{prop}\label{prop:esdArgument-fullranktypical}
	Let $G_1$ and $G_2$ be full-rank typical graphs
	and define $G:= G_1\sqcup G_2$.
	Let $M$ be a $G$-partial matrix, which we may write as
	\[
		M=\begin{pmatrix}
		M_1 & ?\\
		? & M_2
		\end{pmatrix}
	\]
	where $M_i$ is a $G_i$-partial matrix.
	If $M_1$ and $M_2$ are minimally completable to full rank,
	then $M$ is minimally completable to rank $\max\{n,m\}+\esd(M_1,M_2)$.
\end{prop}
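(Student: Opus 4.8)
The plan is to reduce everything to Theorem~\ref{thm:esdArgument-disjointcliques} by stratifying the completions of $M$ according to how the two diagonal blocks get filled in. Write $m$ for the number of vertices of $G_1$ and $n$ for the number of vertices of $G_2$, so that $M_1$ is $m\times m$ and $M_2$ is $n\times n$. Since $G=G_1\sqcup G_2$, the entire off-diagonal block of $M$ is unspecified, so every completion of $M$ has the form
\[
	\begin{pmatrix} M_1(\bfx_1) & X\\ X^T & M_2(\bfx_2)\end{pmatrix},
\]
where $\bfx_1$ and $\bfx_2$ are completions of $M_1$ and $M_2$ and $X$ ranges over all real $m\times n$ matrices. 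Hence
\[
	\min_{\bfx}\rank\bigl(M(\bfx)\bigr) = \min_{\bfx_1,\bfx_2}\ \min_{X}\ \rank\begin{pmatrix} M_1(\bfx_1) & X\\ X^T & M_2(\bfx_2)\end{pmatrix}.
\]

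First I would record that the hypotheses make the inner minimum independent of $\bfx_1$ and $\bfx_2$. Since $M_1$ is minimally completable to full rank and an $m\times m$ matrix has rank at most $m$, every completion $M_1(\bfx_1)$ is a full-rank symmetric matrix; similarly every $M_2(\bfx_2)$ is full rank. By Proposition~\ref{prop:full-rankInertia}, all completions of $M_1$ have a common inertia and all completions of $M_2$ have a common inertia, so $\esd\bigl(M_1(\bfx_1),M_2(\bfx_2)\bigr)$ is the same for every choice of $\bfx_1,\bfx_2$, and this common value is exactly $\esd(M_1,M_2)$ as in Definition~\ref{defn:esdFullRankTypicalGraphs}.

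Next I would apply Theorem~\ref{thm:esdArgument-disjointcliques} fiberwise. For fixed $\bfx_1,\bfx_2$, the matrix $\begin{pmatrix}M_1(\bfx_1) & *\\ * & M_2(\bfx_2)\end{pmatrix}$ is precisely a $K_m^\circ\sqcup K_n^\circ$-partial matrix whose two diagonal blocks are the full-rank symmetric matrices $M_1(\bfx_1)$ and $M_2(\bfx_2)$, so Theorem~\ref{thm:esdArgument-disjointcliques} gives
\[
	\min_{X}\ \rank\begin{pmatrix}M_1(\bfx_1) & X\\ X^T & M_2(\bfx_2)\end{pmatrix} = \max\{m,n\} + \esd\bigl(M_1(\bfx_1),M_2(\bfx_2)\bigr) = \max\{m,n\}+\esd(M_1,M_2).
\]
Substituting this into the displayed expression for $\min_{\bfx}\rank(M(\bfx))$ — whose right-hand side is now a constant — yields that $M$ is minimally completable to rank $\max\{m,n\}+\esd(M_1,M_2)$.

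I expect no serious obstacle: essentially all of the work is already packaged in Theorem~\ref{thm:esdArgument-disjointcliques}. The only point that needs care is the one used in the first step, namely that ``minimally completable to full rank'' forces \emph{every} completion of $M_1$ (and of $M_2$), not merely a generic one, to have full rank, so that Proposition~\ref{prop:full-rankInertia} applies and $\esd(M_1,M_2)$ is genuinely well defined independently of the chosen completion; this is immediate once one observes that an $m\times m$ matrix cannot have rank exceeding $m$. With that in hand, the stratification identity above finishes the argument.
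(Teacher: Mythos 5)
Your proposal is correct and follows the paper's own route: the paper proves this proposition by citing exactly Proposition~\ref{prop:full-rankInertia} (to make $\esd(M_1,M_2)$ well defined, since every completion of an $m\times m$ partial matrix minimally completable to rank $m$ is full rank with fixed inertia) together with Theorem~\ref{thm:esdArgument-disjointcliques} applied to the fully specified diagonal blocks. Your write-up simply fills in the fiberwise stratification that the paper leaves implicit, with no gaps.
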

\begin{proof}
	This follows immediately from Proposition \ref{prop:full-rankInertia} and Theorem \ref{thm:esdArgument-disjointcliques}.
\end{proof}

We end this section with a characterization of the maximum typical ranks
of disjoint unions of more than two cliques.

\begin{prop}\label{prop:disjointThreeCliques}
	Let $G$ be the disjoint union of $k$ cliques
	where the $i^{\rm th}$ clique has size $n_i$
	and $n_1 \ge n_2 \ge \dots \ge n_k$.
	Then the maximum typical rank of $G$ is $n_1 + n_2$.
\end{prop}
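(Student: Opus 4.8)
The plan is to reduce the statement to a single computation of the minimal completion rank of a generic $G$-partial matrix in terms of the inertias of its diagonal blocks, and then to optimize over those inertias. Write $G = K_{n_1}^\circ \sqcup \cdots \sqcup K_{n_k}^\circ$, so that a $G$-partial matrix is just a tuple $(M_1,\dots,M_k)$ of fully specified real symmetric blocks, the unknown entries being exactly the off-diagonal blocks coupling distinct cliques. I would start from a lower bound valid for every completion: since each $M_i$ is a principal submatrix of $M(\bfx)$, Cauchy interlacing gives $p_{M(\bfx)} \ge p_{M_i}$ and $n_{M(\bfx)} \ge n_{M_i}$ for all $i$, hence
\[
    \rank M(\bfx) \ \ge\ \max_i p_{M_i} + \max_i n_{M_i}.
\]

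The heart of the argument is to show this bound is attained whenever the blocks are full rank: \textbf{if every $M_i$ is invertible, then $M$ is minimally completable to rank exactly $\max_i p_{M_i} + \max_i n_{M_i}$.} I would prove this by induction on $k$, the case $k=1$ being trivial. For the inductive step, peel off one clique, writing $G = K_{n_1}^\circ \sqcup G'$ with $G' = K_{n_2}^\circ \sqcup \cdots \sqcup K_{n_k}^\circ$, and set $p' = \max_{i\ge 2} p_{M_i}$, $q' = \max_{i \ge 2} n_{M_i}$. By the inductive hypothesis the $G'$-part $M'$ of $M$ has a completion $M'(\bfc)$ of rank $p'+q'$; comparing with the above lower bound applied to $G'$ forces $\In(M'(\bfc)) = (p',q',\ast)$. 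By Sylvester's law of inertia, $M'(\bfc) = T^{\top}(\widehat M \oplus 0)T$ for some invertible matrix $T$ and some invertible symmetric $\widehat M$ with $\In(\widehat M) = (p',q',0)$. Conjugating $M$ by $I \oplus T$ changes no ranks and leaves the block coupling clique $1$ to $G'$ free; choosing the part of this block meeting the zero summand of $\widehat M \oplus 0$ to vanish, the resulting completion of $M$ has the same rank as a $K_{n_1}^\circ \sqcup K_{p'+q'}^\circ$-partial matrix with diagonal blocks $M_1$ and $\widehat M$. By Theorem~\ref{thm:esdArgument-disjointcliques} that partial matrix completes to rank $\max\{n_1,p'+q'\} + \esd(M_1,\widehat M)$, which equals $\max\{p_{M_1},p'\} + \max\{n_{M_1},q'\} = \max_i p_{M_i} + \max_i n_{M_i}$ by the elementary identity
\[
    \max\{|A|,|B|\} + \esd(A,B) \ =\ \max\{p_A,p_B\} + \max\{n_A,n_B\}
\]
for invertible symmetric matrices $A,B$, where $|\cdot|$ denotes the size; this identity follows by a short case analysis from $|A| = p_A + n_A$. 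Together with the lower bound, this closes the induction.

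Granting the lemma, the proposition follows by optimization. For the upper bound, any typical rank $r$ is attained on a nonempty open set $U$, which meets the open dense locus where all blocks are invertible; at such a point the lemma gives $r = \max_i p_{M_i} + \max_i n_{M_i}$. If both maxima are attained at the same clique $a$, this equals $\rank M_a = n_a \le n_1$; if they are attained at distinct cliques $a \ne b$, it is at most $n_a + n_b \le n_1 + n_2$. Hence every typical rank of $G$ is at most $n_1 + n_2$. For the reverse inequality, on the nonempty open set where $M_1$ is positive definite, $M_2$ is negative definite, and all $M_i$ are invertible, one has $\max_i p_{M_i} = n_1$ (since $p_{M_1} = n_1$ while $p_{M_i} \le n_i \le n_1$ for all $i$) and $\max_i n_{M_i} = n_2$ (since $n_{M_2} = n_2$ while $n_{M_1} = 0$ and $n_{M_i} \le n_i \le n_2$ for $i \ge 2$); so the lemma shows every such matrix is minimally completable to rank $n_1 + n_2$. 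Thus $n_1 + n_2$ is a typical rank, and therefore it is the maximum typical rank of $G$.

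I expect the main obstacle to be the inductive step of the lemma. The key point that makes it go through is that a \emph{minimal}-rank completion of the smaller piece $G'$ is forced to have inertia $(p',q',\ast)$, so after a congruence it behaves exactly like a single invertible block of size $p'+q'$; this is precisely what lets the two-clique result (Theorem~\ref{thm:esdArgument-disjointcliques}) close the recursion. The remaining ingredients — eigenvalue interlacing, the $\esd$ identity above, and checking that the congruence and the vanishing choice keep the coupling entries free — are routine.
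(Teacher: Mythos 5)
Your proposal is correct, and it takes a genuinely different route from the paper. The paper gets the lower bound by citing Corollary~\ref{cor:maxTypicalLowerBound} (the complement of the first two cliques induces a bipartite graph on $n_1+n_2$ vertices), and gets the upper bound by exhibiting, for a generic partial matrix, an explicit completion of rank $n_1+n_2$: after diagonalizing each block it zeroes out the Schur complement $B(0)-Y^TA(0)^{-1}Y$, with a separate relabeling/sign-matching step in the case $\esd(M_1,M_2)<n_2$ (choosing $i=2$ to maximize $\esd(M_1,M_i)$). You instead prove a sharper intermediate result: whenever all diagonal blocks are invertible, the minimum completion rank equals $\max_i p_{M_i}+\max_i n_{M_i}$, with the lower bound from Cauchy interlacing and the upper bound by induction on $k$, using that a minimal completion of the peeled-off part is forced to have inertia $(p',q',\ast)$, then collapsing it via Sylvester congruence to a single invertible block of size $p'+q'$ and invoking Theorem~\ref{thm:esdArgument-disjointcliques}; your identity $\max\{|A|,|B|\}+\esd(A,B)=\max\{p_A,p_B\}+\max\{n_A,n_B\}$ checks out and makes the bookkeeping clean. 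What your approach buys is strictly more information: it effectively extends Theorem~\ref{thm:esdArgument-disjointcliques} from two cliques to $k$ cliques, giving the exact minimal completion rank (hence the full list of typical ranks and their open regions) for the multi-clique pattern, from which the proposition follows by an easy optimization over inertias; the paper's argument is more constructive and self-contained at the level of explicit completions but only pins down the maximum typical rank. The only points to be careful about, which you handle correctly, are that the coupling block remains fully free after the congruence by $I\oplus T$ (since $Z\mapsto ZT$ is a bijection), and that a nonempty open set of partial matrices always meets the dense locus where all blocks are invertible.
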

\begin{proof}
	Corollary \ref{cor:maxTypicalLowerBound} implies that the maximum typical rank of $G$
	is at least $n_1 + n_2$.
	To prove the other direction, let $M(\bfx)$ be a generic $G$-partial matrix.
	We may write
	\[
	M(\bfx)=\begin{pmatrix}
	A(\bfx) & Y(\bfx)\\
	Y(\bfx)^T & B(\bfx)
	\end{pmatrix}.
	\]
	where
	\[
	A(\bfx):=\begin{pmatrix}
	M_1 & X_{12}\\
	X_{12}^T & M_2
	\end{pmatrix}
	\qquad
	B(\bfx):=\begin{pmatrix}
	M_3 & X_{34} & \cdots & X_{3k} \\
	X_{34}^T & M_4 & \cdots & X_{4k} \\
	\vdots & \vdots & \ddots & \vdots \\
	X_{3k}^T & X_{4k}^T & \cdots & M_k
	\end{pmatrix}
	\qquad
	Y(\bfx) :=
	\begin{pmatrix}
	    X_{13} & \cdots & X_{1k}\\
	    X_{23} & \cdots & X_{2k}
	\end{pmatrix}.
	\]
	where each $M_i$ is a $n_i\times n_i$ fully specified symmetric matrix
	and each $X_{ij}$ is a matrix of indeterminates.
	Just as in the proof of Theorem \ref{thm:esdArgument-disjointcliques},
	we may use the eigendecomposition of each $M_i$ to obtain a linear change of variables
	so that each $M_i$ is diagonal.
	Therefore, without loss of generality, assume $M_i=\diag(m_{i1},\dots,m_{in_i})$ for all $i=1,\dots,k$.
	
	First, consider the case that $\esd(M_1,M_2)=n_2$.
	Without loss of generality, assume that $M_1$ is positive definite and $M_2$ is negative definite.
	We now describe a completion of $M$ to rank $n_1 + n_2$,
	similar to the construction given in the proof of Theorem \ref{thm:esdArgument-disjointcliques}.
	Denote $(p,q)$-entry of $X_{ij}$ by $(X_{ij})_{pq}$.
	For all $X_{ij}$ that are not blocks of $Y$ (i.e.~$i \ge 3$, or $i = 1$ and $j = 2$),
	set $X_{ij} = 0$.
	For the $X_{ij}$ that are blocks of $Y$ (i.e.~$i = 1,2$ and $j \ge 3$)
	set $(X_{ij})_{pq} = 0$ when $p \neq q$,
	and specify the remaining entries as follows
	\begin{eqnarray*}
		(X_{1j})_{pp}=\left\{\begin{array}{cl}
			\sqrt{\frac{m_{jp}}{m_{1p}}} & \text{if } \ m_{jp}>0\\
			0 & \text{otherwise}
		\end{array}\right.
		\qquad
		(X_{2j})_{pp}=\left\{\begin{array}{cl}
			\sqrt{\frac{m_{jp}}{m_{2p}}} & \text{if } \ m_{jp}<0\\
			0 & \text{otherwise}.
		\end{array}\right.
	\end{eqnarray*}
	This ensures that $B(0)-Y^T A(0)^{-1} Y$ is a zero matrix.
	The rank formula for Schur complements then gives
	\[
	\rank(M(\bfx))=\rank(A(0))+\rank\left(B(0)-Y^T A(0)^{-1} Y\right)=n_1+n_2.
	\]
	
	Now, consider the case that $\esd(M_1,M_2)<n_2$.
	If $\esd(M_1,M_2)<\esd(M_1,M_i)$ for some $i>2$,
	since $n_1 \ge \dots \ge n_k$,
	we may proceed by relabeling the blocks so that $\esd(M_1,M_2)=\max\limits_{i=2,\dots,k}\esd(M_1,M_i)$
	and exhibiting a completion of $M(\bfx)$ to rank $n_1 + n_2$.
	Since $i = 2$ maximizes $\esd(M_1,M_i)$,
	after possibly re-ordering rows and columns,
	we may assume that for any diagonal entry $m_{il}$,
	either $m_{il}m_{1l}>0$ or $m_{il}m_{2l}>0$.
	Hence, we can complete $X_{1i}$ and $X_{2i}$ as before
	to ensure that $B(0)-Y^T A(0)^{-1} Y$ is a zero matrix.
	Since $\rank(A(0)) = n_1 + n_2$,
	the rank formula for Schur complements implies
	\[
	\rank(M(\bfx))=\rank(A(0))+\rank\left(B(0)-Y^T A(0)^{-1} Y\right)=n_1+n_2. \qedhere
	\]
\end{proof}

\section{Low maximum typical ranks}

\subsection{Generic completion rank one}
We first look at the semisimple graphs which have generic completion rank one.
There is a known characterization for such graphs.

\begin{prop}[{\cite[Proposition 5.3]{singer2010uniqueness}}]\label{prop:gcr1graphs}
	The generic completion rank of a semisimple graph $G$ is one if and only if
	$G$ is free of even cycles,
	and every connected component of $G$ has at most one odd cycle.
\end{prop}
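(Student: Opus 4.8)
The plan is to translate rank-one completability into a concrete system of quadratic equations and then solve that system combinatorially. Over $\cc$, a symmetric matrix of rank at most $1$ is exactly $vv^T$ for some $v\in\cc^n$, so a generic $G$-partial matrix $M$ satisfies $\gcr(G)\le 1$ precisely when the system $v_iv_j=M_e$ (one equation per edge $e=\{i,j\}$ of $G$, a loop at $i$ contributing $v_i^2=M_{ii}$) has a solution for generic values of the specified entries $M_e$; since a generic $M$ has a nonzero specified entry, $\gcr(G)=1$ is then equivalent to this solvability (assuming $G$ has at least one edge, an edgeless $G$ having $\gcr(G)=0$).

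First I would set up the parametrization. Fix a spanning forest $F$ of $G$; in each connected component $c$ choose a root $r_c$, and let $\sigma_i=(-1)^{d_F(r_c,i)}$ be the signs from the proper $2$-coloring of the tree. Assigning a value $t_c$ to $v_{r_c}$, the tree equations force, by induction down the tree, $v_i=\mu_i(M)\,t_c^{\sigma_i}$, where $\mu_i(M)$ is a Laurent monomial in the specified entries of the edges of $F$; for generic $M$ this is well defined and nonzero. Conversely, for a generic $M$ and any rank-one completion $v$, every coordinate of $v$ in a component containing an edge is nonzero (a vanishing coordinate would force a specified entry to vanish, since every vertex of such a component has an incident edge), so one may set $t_c:=v_{r_c}$ and recover the same description. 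Thus completions correspond exactly to choices of the $t_c$ satisfying the remaining equations, one per non-$F$ edge $e=\{i,j\}$, namely $\mu_i(M)\mu_j(M)\,t_c^{\sigma_i+\sigma_j}=M_e$. The fundamental cycle of $e$ has length $d_F(i,j)+1$; it is even exactly when $\sigma_i=-\sigma_j$, in which case the equation degenerates to a relation $\mu_i(M)\mu_j(M)=M_e$ among the specified entries, and otherwise (odd fundamental cycle, including a loop) $\sigma_i=\sigma_j$ and the equation reads $t_c^{\pm2}=M_e/(\mu_i(M)\mu_j(M))$, pinning down $t_c^2$.

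With this in hand the two directions follow. For the "if" direction, note that "free of even cycles and at most one odd cycle per component" is equivalent to "each component $c$ has first Betti number $\beta_c\le1$, with the unique cycle odd when $\beta_c=1$" (the graph-theoretic point being that a graph with $\beta\ge2$ has at least two distinct cycles, since deleting one edge of the unique cycle of a $\beta=1$ graph leaves a forest). Then each component with edges has at most one non-$F$ edge, and it is of odd type, so the displayed equation can be solved for $t_c$; hence generic $M$ is rank-one completable and $\gcr(G)=1$. For the "only if" direction I would argue the contrapositive. If $G$ has an even cycle $C$, choose $F$ to contain $C$ minus one of its edges $e$; then $e$ is a non-$F$ edge with even fundamental cycle, so any rank-one completion forces the polynomial relation $\mu_i(M)\mu_j(M)=M_e$, which is nontrivial (because $M_e$ does not occur in the $F$-monomials $\mu_i,\mu_j$) and hence fails for generic $M$. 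Otherwise, if some component $c$ has at least two odd cycles (and, in the remaining case, no even cycles exist), then $\beta_c\ge2$, so any spanning tree of $c$ leaves at least two non-$F$ edges $e_1,e_2$, both of odd type; a rank-one completion would have to satisfy $t_c^{\pm2}=M_{e_1}/(\cdots)$ and $t_c^{\pm2}=M_{e_2}/(\cdots)$ simultaneously, forcing a nontrivial relation among the specified entries (again because $M_{e_1}$ occurs on only one side), which fails generically. In both cases generic $M$ is not rank-one completable, so $\gcr(G)\ge2$.

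The main obstacle I anticipate is bookkeeping rather than algebra: making precise that the "forced relations" in the bad cases are genuinely nontrivial Laurent-polynomial identities — i.e.\ that the locus of specified entries they cut out is a proper subvariety — which amounts to carefully tracking which coordinates $M_e$ occur in which monomials $\mu_i$, together with the complementary check that in the good cases the relevant $\mu_i(M)$ and right-hand sides are nonzero for generic $M$. The combinatorial translation between the stated condition and the Betti-number formulation is routine but should be written out explicitly.
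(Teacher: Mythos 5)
Your argument is correct, but note that the paper does not prove this statement at all: Proposition \ref{prop:gcr1graphs} is quoted verbatim from Singer--Cucuringu \cite[Proposition 5.3]{singer2010uniqueness}, so there is no internal proof to compare against. What you have written is a legitimate self-contained derivation: over $\cc$ every symmetric matrix of rank at most one is $vv^T$, so rank-one completability of a generic $G$-partial matrix is exactly solvability of the monomial system $v_iv_j=M_e$; fixing a spanning forest and propagating down each tree expresses every $v_i$ as a Laurent monomial in the forest entries times $t_c^{\pm1}$, and then each non-forest edge either imposes a nontrivial Laurent relation among the specified entries (even fundamental cycle) or pins down $t_c^{2}$ (odd fundamental cycle, loops included), which immediately yields both directions once you observe that the stated condition is equivalent to each component having first Betti number at most one with its unique cycle odd. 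The key genericity points you flag — every coordinate of a rank-one completion is nonzero on a component containing an edge, and the forced relations are nontrivial because the non-forest entries $M_e$ occur on only one side — are exactly the places where care is needed, and your treatment of them is sound (the only genuinely degenerate case, the edgeless graph with $\gcr=0$, you dispose of explicitly). This is close in spirit to the rigidity-theoretic parametrization used in the cited source, but as far as this paper is concerned your proof is an independent verification rather than a variant of an argument given here.
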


Proposition \ref{prop:typicalrankProperties} implies that if $G$ has generic completion rank one,
then the maximum typical rank of $G$ is at most two.
The following proposition tells us when two is attained. 

\begin{thm}\label{thm:typicalrank2Graphs}
	Let $G$ be a semisimple graph with generic completion rank $1$.
	Then $G$ has $2$ as a typical rank if and only if $G$ has at least two odd cycles.
\end{thm}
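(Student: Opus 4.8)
The plan is to use the explicit shape of rank-one real symmetric matrices. Such a matrix is exactly one of the form $\varepsilon vv^T$ with $\varepsilon\in\{+1,-1\}$ and $v\in\RR^n$, so a real $G$-partial matrix $M$ is completable to a matrix of rank $\le 1$ if and only if there is a single pair $(\varepsilon,v)$ with $M_e=\varepsilon v_iv_j$ for every edge $e=\{i,j\}$ of $G$ (a loop at $i$ being read as $M_{ii}=\varepsilon v_i^2$); the unspecified entries, including all those joining distinct connected components of $G$, impose no condition, since we may just set them to $\varepsilon v_iv_j$. I will work on the dense open set of $G$-partial matrices all of whose specified entries are nonzero, which is harmless for deciding typicality, and I note that every generic $G$-partial matrix is completable to rank $\le 2$ (glue component-wise rank-$\le 1$ completions), consistent with Proposition~\ref{prop:typicalrankProperties}. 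Hence the minimum completion rank is always $1$ or $2$, and $2$ is a typical rank of $G$ precisely when there is a nonempty Euclidean-open set of $G$-partial matrices admitting no completion of rank $\le 1$.

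The engine of the proof is a sign condition. If $M=\varepsilon vv^T$ is a rank-one completion and $Z=i_1i_2\cdots i_k$ is an odd cycle of $G$ (with a loop the case $k=1$), then, reading indices cyclically, $\prod_{t=1}^k M_{i_ti_{t+1}}=\varepsilon^k\bigl(\prod_t v_{i_t}\bigr)^2=\varepsilon\bigl(\prod_t v_{i_t}\bigr)^2$, and the $v_{i_t}$ are nonzero because the specified entries are; hence $\varepsilon=\sign\!\left(\prod_{e\in Z}M_e\right)$. So a rank-one completion can exist only if the data along every odd cycle of $G$ predicts the same sign.

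For the ``if'' direction, assume $G$ has at least two odd cycles. By Proposition~\ref{prop:gcr1graphs} no connected component contains two odd cycles, so two distinct components $C_1,C_2$ carry odd cycles $Z_1,Z_2$ on disjoint edge sets. On the nonempty Euclidean-open set where $\sign\!\left(\prod_{e\in Z_1}M_e\right)\ne\sign\!\left(\prod_{e\in Z_2}M_e\right)$ and all specified entries are nonzero, the sign condition rules out any completion of rank $\le 1$, so the minimum completion rank equals $2$; hence $2$ is typical. For the ``only if'' direction I argue contrapositively: if $G$ has at most one odd cycle, then every $M$ with nonzero specified entries completes to rank one, so $2$ is not typical. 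If $G$ has no odd cycle, then it is a forest (it also has no even cycle, by Proposition~\ref{prop:gcr1graphs}); take $\varepsilon=1$, root each tree, give each root an arbitrary nonzero value, and propagate $v_j:=M_{ij}/v_i$ along edges. If $G$ has exactly one odd cycle $Z=i_1\cdots i_k$, it sits in a unicyclic component while all other components are trees; set $\varepsilon:=\sign\!\left(\prod_{e\in Z}M_e\right)$, solve the cyclic system $v_{i_t}v_{i_{t+1}}=\varepsilon M_{i_ti_{t+1}}$ around $Z$ over $\RR$, and then propagate outward along the trees hanging off $Z$ and, with this same $\varepsilon$, along the remaining tree components. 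In each case, after filling the unspecified entries, $M=\varepsilon vv^T$, so the minimum completion rank is $1$.

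The one genuinely technical point is the solvability of the cyclic system in the last step: since $k$ is odd, the product of its right-hand sides is $\varepsilon^k\prod_{e\in Z}M_e=\bigl|\prod_{e\in Z}M_e\bigr|>0$, and writing $v_{i_1}^2$ as a ratio of the alternating subproducts of the $\varepsilon M_e$ shows this positivity is exactly what makes the system real-solvable; one then checks that this solution extends consistently to the trees of the unicyclic component and to the forest components, where no further obstruction arises because those pieces contain no cycles. Everything else is bookkeeping with Proposition~\ref{prop:gcr1graphs} and the parity of cycle lengths, and I expect this cyclic-system check to be the main place where care is needed.
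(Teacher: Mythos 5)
Your proof is correct, but it takes a genuinely different route from the paper's. You parametrize rank-one real symmetric completions as $\varepsilon vv^T$ and extract a global sign invariant: along any odd cycle $Z$ with nonzero data, any rank-one completion forces $\varepsilon=\sign\bigl(\prod_{e\in Z}M_e\bigr)$, so two odd cycles (necessarily in distinct components, by the gcr-one characterization) with disagreeing cycle-products give an open set with no rank-$\le 1$ completion; conversely, with at most one odd cycle you solve the cyclic system explicitly (your positivity check of the alternating ratio for odd $k$ is exactly right) and propagate along trees, so rank-one completability holds off the zero locus of the specified entries and $2$ cannot be typical. The paper instead argues by induction: in any rank-one completion a $3$-path or $3$-cycle forces an entry to equal $ac/b$, which lets it contract each odd cycle down to a loop, with base case $K_1^\circ\sqcup K_1^\circ$ and a negative $2\times 2$ principal minor; the converse is built up by completing entries via the same local formulas and a diagonal-sign consistency argument across components. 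Your approach buys a cleaner, induction-free obstruction (the cycle-product sign) and makes transparent both why the two cycles live in different components and where real-solvability comes from; the paper's approach stays within the local partial-matrix manipulations it uses elsewhere. Two small points to make explicit in a final write-up: the rank-$\le 2$ gluing needs the grouping of components by their sign $\varepsilon$ (concatenate same-sign vectors into one rank-one block, so the completion is $uu^T-ww^T$, not a block-diagonal of many rank-one pieces), and in the "if" direction you should note, as you implicitly do, that on your open set a rank-$2$ completion does exist, so $2$ is attained as the minimal completion rank rather than merely ruling out rank $1$.
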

\begin{proof}
	We begin by showing that if $G$ has two odd cycles, then $G$ has two as a typical rank.
	If each of these cycles is a loop, then $G$ has $K_1^\circ \sqcup K_1^\circ$
	as an induced subgraph.
	Setting the two corresponding diagonal entries to values with opposite signs
	yields a principal $2 \times 2$ minor that will have a strictly negative determinant for any completion.
	If one of these odd cycles has size three,
	then any $G$-partial matrix $M(\bfx)$ has a sub partial matrix $N$ of the form
	\begin{equation}\label{eq:3cycle}
	N(\bfx) = 
	\begin{pmatrix}
	x_{1} & a & b \\
	a & x_{2} & c \\
	b & c & x_{3}
	\end{pmatrix}.
	\end{equation}
	Thus any rank-one completion of $M(\bfx)$ must set $x_2 = \frac{ac}{b}$.
	If one of the cycles has size greater than three,
	choose three edges in this odd cycle that form a path.
	We can write the corresponding sub partial matrix $N$ of $M$ as
	\begin{equation}\label{eq:3path}
	N(\bfx) = 
	\begin{pmatrix}
	x_0 & a & x_1 & x_2 \\
	a & x_3 & b & x_4 \\
	x_1 & b & x_5 & c \\
	x_2 & x_4 & c & x_6
	\end{pmatrix}.
	\end{equation}
	Thus any rank-one completion of $M(\bfx)$ must set $x_2 = \frac{ac}{b}$.
	Let $G'$ be the graph obtained from $G$ by deleting two of the vertices in a three-cycle
	and adding a loop onto the remaining vertex,
	or by deleting the inner vertices in a path of length three
	and adding an edge between the two outer edges.
	In both cases, let $M'(\bfx)$ be the $G'$-partial matrix
	obtained from $M(\bfx)$ by deleting the corresponding rows and columns,
	and setting $x_2$ to $\frac{ac}{b}$.
	Note that $G'$ has generic completion rank one,
	and that if $M'(\bfx)$ is minimally completable to rank $2$, then so is $M(\bfx)$.
	As $a,b$, and $c$ range over $\mathbb{R}$, $\frac{ac}{b}$ takes all real values.
	So if $G'$ has two as a typical rank, then for some choice of $\bfx$,
	$M'(\bfx)$ is minimally completable to rank two.
	That $G'$ has two as a typical rank now follows by induction.

	We now show that if $G$ has at most one odd cycle,
	then $G$ only has $1$ as a typical rank.
	Let $M(\bfx)$ be a $G$-partial matrix.
	We begin by assuming that $G$ is itself an odd cycle.
	The case that $G$ is a $1$-cycle is trivial,
	and if $G$ is a $3$-cycle, we have formulas for the diagonal entries of $M(\bfx)$ as in \eqref{eq:3cycle}.
	If $G$ has length greater than $3$, then we have formulas for all the off-diagonal entries of $M(\bfx)$
	as in \eqref{eq:3path},
	then formulas for the diagonal entries as in \eqref{eq:3cycle}.
	Now assume that $G$ is connected.
	If $G$ has no odd cycle, add a loop and set the corresponding diagonal entry of $M(\bfx)$ to a generic real number.
	Otherwise, construct a rank-one completion of $M(\bfx)$ corresponding to the odd cycle as before,
	and add the corresponding edges to $G$.
	Now all the remaining unknown diagonal entries can be completed to the same sign.
	To see this, note that if $G$ has an induced subgraph
	consisting an edge joining a 
	vertex having a loop to a vertex with no loop,
	$M(\bfx)$ has a sub partial matrix $N(\bfx)$ of the form
	\[
	N(\bfx) = \begin{pmatrix}
	a & b \\
	b & x
	\end{pmatrix}
	\]
	where $x$ is unknown.
	In a rank-1 completion, we must have $x = \frac{b^2}{a}$, which will have the same sign as $a$.
	Thus when we complete the missing diagonals according to this formula, they will all have the same sign.
	Then, we can complete the off diagonal entries over the reals.
	
	If $G$ is disconnected, then at most one connected component has an odd cycle.
	Complete the missing entries of $M(\bfx)$ in this component with the odd cycle as in the previous case.
	On each remaining component, set one of the diagonal entries to a generic real number whose sign
	is the same as the sign of the diagonal entries.
	Then we can complete each connected component as before,
	and since the signs of all the diagonal entries will be equal,
	we can then complete the remaining unknown off-diagonal entries.
\end{proof}

\subsection{Looped graphs}
A vertex $v$ of a graph $G$ is said to be \emph{looped} if $G$ has a loop at $v$.
We say that a graph $G$ is \emph{looped} if every vertex of $G$ is looped.
The main result of this subsection is Theorem \ref{thm:loopedTrees} below.
It gives a combinatorial characterization of the typical ranks of a looped graph that has generic completion rank at most $2$.
Recall that a \emph{star tree} is a tree with at most one non-leaf vertex.

\begin{thm}\label{thm:loopedTrees}
    Let $G$ be a looped graph.
    \begin{enumerate}
    	\item\label{item:gcrAtMostTwo} The generic completion rank of $G$ is at most $2$ if and only if $G$ is a looped forest.
    	Equality is attained if and only if $G$ has at least one non-loop edge.
    	\item\label{item:gcrOneLooped} When $G$ has generic completion rank $1$ and at least two vertices, $G$ also has $2$ as a typical rank.
    	\item\label{item:gcrTwoLooped} When $G$ has generic completion rank $2$,
    	the maximum typical rank of $G$ is
    	\begin{enumerate}
    	    \item $2$ if $G$ has exactly two vertices
    	    \item $3$ if $G$ has at least three vertices 
    	    and is the union of a looped star tree and a looped set of isolated vertices, and
    	    \item $4$ otherwise.
    	\end{enumerate}
    \end{enumerate}
\end{thm}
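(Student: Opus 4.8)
The plan is to prove each part of Theorem~\ref{thm:loopedTrees} in turn, leaning on the structural results already established. Part~\eqref{item:gcrAtMostTwo} is quoted verbatim from \cite[Theorem 2.5]{GSMR3706762}, so nothing is needed there beyond citing it and observing that a looped forest with no non-loop edge is a disjoint union of $K_1^\circ$'s, which has generic completion rank $1$ (every diagonal entry can be filled to make a rank-$1$ matrix), so strict equality $\gcr(G)=2$ forces a non-loop edge.

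For part~\eqref{item:gcrOneLooped}: if $G$ is a looped forest with generic completion rank $1$, then by Proposition~\ref{prop:gcr1graphs} it has no even cycles and at most one odd cycle per component; combined with being looped, the only odd cycles available are loops, and $\gcr(G) = 1$ forces no non-loop edges (by part~\eqref{item:gcrAtMostTwo}, a non-loop edge would push the gcr to $2$). Hence $G = \bigsqcup K_1^\circ$, and with at least two vertices it contains $K_1^\circ \sqcup K_1^\circ$ as an induced subgraph. Setting the two diagonal entries to values of opposite sign yields a $2\times 2$ principal submatrix with strictly negative determinant, so every completion has rank $\ge 2$; since $\gcr(G)=1 \le 2$, Proposition~\ref{prop:typicalrankProperties} (or direct argument) gives that $2$ is typical. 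This is essentially the first case treated in the proof of Theorem~\ref{thm:typicalrank2Graphs}.

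For part~\eqref{item:gcrTwoLooped}, the key observation is that a looped forest $G$ with $\gcr(G) = 2$ is, up to isolated looped vertices, a disjoint union of looped trees each having at least one non-loop edge. The two-vertex case is immediate from Proposition~\ref{prop:typicalrankProperties}: $\gcr(G)=2$ and the maximum typical rank is at most $2\gcr(G)=4$ but also at most the matrix size $2$, hence exactly $2$. For the case where $G$ is a looped star tree $\sqcup$ looped isolated vertices: I would show the maximum typical rank is exactly $3$. For the lower bound, note the complement of a looped star tree (on $k$ leaves plus center, so $k+1$ vertices) together with looped isolated vertices has a bipartite induced subgraph of size $3$ (two leaves and one isolated vertex, say, pairwise non-adjacent in $G$), giving typical rank $\ge 3$ by Corollary~\ref{cor:maxTypicalLowerBound}; one must check this is the largest such. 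For the upper bound $3$, I would exhibit, for a generic $G$-partial matrix, a completion of rank $\le 3$: use the looped-star structure to parametrize a rank-$2$ ``core'' on the star and then absorb the isolated vertices with at most one extra rank unit via a Schur-complement / $\esd$-type computation as in the proof of Theorem~\ref{thm:esdArgument-disjointcliques}. For the ``otherwise'' case, any looped forest with $\gcr = 2$ that is not of the star-plus-isolated form contains either a looped path on three vertices whose two endpoints are non-adjacent, or two disjoint looped trees each with a non-loop edge; in either configuration I would produce, via the $\esd$ construction of Proposition~\ref{prop:esdArgument-fullranktypical} applied to suitable induced full-rank-typical subgraphs, an open set of partial matrices whose minimal completion rank is $4$, and invoke the general upper bound $\max$ typical rank $\le 2\gcr(G) = 4$.

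The main obstacle I anticipate is the case analysis in part~\eqref{item:gcrTwoLooped}, specifically pinning down exactly which looped forests force maximum typical rank $3$ versus $4$ and producing the explicit rank-$4$ (resp. rank-$\le 3$) completions. The rank-$\le 3$ upper bound for the star-plus-isolated case is delicate because one must handle the interaction of the isolated looped vertices (which contribute sign constraints like the two-clique $\esd$ analysis) with the star's non-loop edges; the cleanest route is probably to reduce to a disjoint union $K_1^\circ \sqcup \cdots$ after completing the star's edges, then apply Theorem~\ref{thm:esdArgument-disjointcliques} or Proposition~\ref{prop:disjointThreeCliques}. Conversely, for the rank-$4$ lower bound in the ``otherwise'' case, the work is in verifying that the relevant induced subgraph really is full-rank typical (its complement bipartite) and that the $\esd$ value can be made to equal $2$ on an open set, which is where Proposition~\ref{prop:esdArgument-fullranktypical} and the definition of $\esd$ do the heavy lifting.
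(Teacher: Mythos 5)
Parts (1) and (2) of your outline are fine and agree with the paper's route (the paper gets (2) from Theorem~\ref{thm:fullranktypicalCharacterization} applied to the induced $K_1^\circ\sqcup K_1^\circ$, which is the same two-opposite-signs argument you describe). The problems are all in part (3). First, your lower-bound certificate for the value $3$ fails: two leaves and an isolated vertex are pairwise non-adjacent in $G$, so they induce a \emph{triangle} in $G^c$, which is not bipartite, and Corollary~\ref{cor:maxTypicalLowerBound} gives nothing from such a triple; indeed the induced subgraph $K_1^\circ\sqcup K_1^\circ\sqcup K_1^\circ$ has maximum typical rank $2$ by Proposition~\ref{prop:disjointThreeCliques}. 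You need a triple containing an edge of $G$ (center, leaf, isolated vertex), or simply cite Lemma~\ref{lemma:startree} and Proposition~\ref{prop:maximumtypicalrank3graphs}. Second, your upper bound of $3$ in the star-plus-isolated case is not established: your ``cleanest route'' of completing the star's unknowns first and then applying Theorem~\ref{thm:esdArgument-disjointcliques} or Proposition~\ref{prop:disjointThreeCliques} does not apply, since those results require fully specified, full-rank (and for the proposition, generic) diagonal blocks, whereas your star block would be a deliberately low-rank, non-generic matrix; taken at face value they would only bound the rank by $(\text{size of star})+1$, not $3$. The argument that actually works is the Schur complement at the looped center: the leaves together with the isolated vertices form an independent set whose partial matrix (only the diagonal specified) always admits a completion of rank at most $2$, and adding a looped suspension vertex raises every typical rank by exactly one --- this is Lemma~\ref{lem:addingSuspensionVertex} combined with Proposition~\ref{prop:upperBoundForMaxTypicalRank} in the paper, and it has to be proved rather than gestured at.

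Third, your structural dichotomy for the ``otherwise'' case is misstated. A looped path on three vertices with non-adjacent endpoints sits inside every looped star with at least two leaves (maximum typical rank $3$), and no $3$-vertex configuration can certify typical rank $4$. The correct statement is that a looped forest with a non-loop edge that is not a star plus isolated vertices contains two \emph{vertex-disjoint} non-loop edges --- either a path on four vertices inside a single non-star tree, or one edge in each of two components --- hence contains $K_2^\circ\sqcup K_2^\circ$ as a subgraph. Then setting one $2\times 2$ block positive definite and the other negative definite forces every completion to have rank at least $4$ by Proposition~\ref{prop:KnKmfullrank}, on an open set of partial fillings; note this works even when the four vertices do not induce a disjoint union (the extra known middle entry is harmless), so Proposition~\ref{prop:esdArgument-fullranktypical}, which is stated for disjoint unions, is not the right tool for the within-one-tree branch. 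Combined with the bound of $4$ from Proposition~\ref{prop:typicalrankProperties}, this is exactly how the paper finishes.
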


Before proving Theorem \ref{thm:loopedTrees},
we give a handful of intermediate results.
Recall that a \emph{suspension vertex} in a graph is a vertex that is adjacent to every other vertex.
The relevance of the following lemma to Theorem \ref{thm:loopedTrees} comes from the fact
that a star tree can be obtained from a set of isolated vertices by adding a suspension vertex.

\begin{lem}\label{lem:addingSuspensionVertex}
	Adding a looped suspension vertex to a semisimple graph increases all its typical ranks by 1.
\end{lem}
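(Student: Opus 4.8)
The plan is to relate the typical ranks of $G \sqcup \{v\}$ with a loop at $v$ and all edges from $v$ to $V(G)$ back to the typical ranks of $G$, losing exactly one. Write $G'$ for the graph obtained by adding the looped suspension vertex, and index that vertex as $0$. A $G'$-partial matrix then has block form
\[
M'(\bfx) = \begin{pmatrix} d & w^T \\ w & M(\bfx) \end{pmatrix},
\]
where $d \in \rr$ is the specified loop value, $w \in \rr^n$ is the specified vector of edges from $0$ to the other vertices, and $M(\bfx)$ is a $G$-partial matrix. For a \emph{generic} such partial matrix we may assume $d \neq 0$. The key computation is the Schur complement identity: $\rank(M'(\bfx)) = 1 + \rank\!\big(M(\bfx) - \tfrac1d\, w w^T\big)$. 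So completing $M'$ to minimal rank is the same as completing the $G$-partial matrix $M(\bfx) - \tfrac1d w w^T$ to minimal rank and adding $1$. Since subtracting the rank-one term $\tfrac1d ww^T$ is an invertible (indeed affine) change of the specified entries, and since $w,d$ are generic, $M(\bfx) - \tfrac1d w w^T$ ranges over generic $G$-partial matrices as $M$ does. This will give: $r$ is a typical rank of $G'$ if and only if $r-1$ is a typical rank of $G$ — in particular every typical rank goes up by exactly $1$.

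Here is the order I would carry this out. First, record the Schur-complement rank formula and note $d \neq 0$ generically, so it applies. Second, prove the "$\ge$" direction of each inequality on typical ranks: if $U \subseteq \rr^{E(G)}$ is an open set witnessing that $r$ is typical for $G$, pull it back through the affine map $(M, w, d) \mapsto M + \tfrac1d w w^T$ to get an open set of $G'$-partial matrices (taking $d$ in, say, the positive reals and $w$ arbitrary) on which the minimal completion rank is exactly $r+1$; the minimality transfers because any completion of $M'$ of rank $< r+1$ would, via the Schur formula, give a completion of the corresponding $G$-partial matrix of rank $< r$. Third, prove the reverse containment the same way: an open set of $G'$-partial matrices witnessing that $r+1$ is typical projects (fixing the generic values of $w,d$, which is an open condition) to an open set of $G$-partial matrices on which minimal completion rank is $r$. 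Fourth, observe that this is a bijection between typical ranks of $G$ and typical ranks of $G'$ shifted by $1$, which is exactly the claim; in particular $\gcr(G') = \gcr(G) + 1$ and $\max$-typical$(G') = \max$-typical$(G) + 1$.

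The main obstacle — and the step deserving the most care — is the bookkeeping that the change of variables $M \leftrightarrow M - \tfrac1d ww^T$ genuinely identifies "generic $G'$-partial matrix" with "generic $G$-partial matrix, together with generic $(w,d)$," so that full-dimensional (Euclidean-open) subsets correspond to full-dimensional subsets in both directions, and the exceptional algebraic sets match up. Concretely: the map $\rr^{E(G')} = \rr \times \rr^n \times \rr^{E(G)} \to \rr^{E(G)}$ sending $(d,w,M) \mapsto M - \tfrac1d w w^T$ is a submersion onto $\rr^{E(G)}$ away from $d = 0$, so preimages of open sets are open and preimages of proper algebraic (or lower-dimensional semialgebraic) sets are again proper; conversely, restricting to any fixed generic slice $d = d_0 > 0$, $w = w_0$ gives a translation of $\rr^{E(G)}$, which preserves openness and dimension. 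One must also check that the "cannot be completed below rank $r$" clause — which involves the non-generic locus where lower completions exist — is handled by the same correspondence; this follows since that locus is cut out by the vanishing of the relevant determinantal polynomials in $\bfx$ having (or not having) real solutions, a condition that transports across the affine change of variables. Once this dictionary is in place, the proof is a short formal argument.
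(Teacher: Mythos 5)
Your proposal is correct and follows essentially the same route as the paper: write the $G'$-partial matrix in block form with the new looped vertex, apply the Schur-complement rank formula $\rank(M'(\bfx)) = 1 + \rank\bigl(M(\bfx) - \tfrac1d\,ww^T\bigr)$, and observe that subtracting $\tfrac1d\,ww^T$ is an affine change of variables identifying generic $G'$-partial matrices (with generic $d\neq 0$, $w$) with generic $G$-partial matrices. If anything, you are more explicit than the paper, which only spells out the direction ``$r$ typical for $G$ implies $r+1$ typical for $G'$'' and leaves the converse (needed when the lemma is later used for upper bounds) to the same evident change-of-variables argument that you carry out in detail.
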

\begin{proof}
	Let $G = ([n],E)$ be a semisimple graph and let $r$ be a typical rank of $G$.
	Let $H = ([n+1],E')$ be obtained from $G$ by adding a looped suspension vertex and let $N(\bfx)$ be an $H$-partial matrix.
	We may assume that $N(\bfx)$ is of the following form
	\[
		N(\bfx)=\begin{pmatrix}
		\alpha & v \\
		v^T & M(\bfx)
		\end{pmatrix}
	\]
	where $M(\bfx)$ is a $G$-partial matrix,
	$v$ is a fully specified row vector and $\alpha$ is a nonzero real number.
	Given a completion $\bfx_0$, the rank formula for Schur complements gives
	\[
		\rank(N(\bfx_0))=1+\rank\left(M(\bfx_0)-\frac{1}{\alpha}v^Tv\right).
	\]
	Applying a linear change of coordinates on the space of $G$-partial matrices
	and a linear change of variables for the substitution,
	it is evident that we may choose a generic $M$ such that
	the minimum of $\rank\left(M(\bfx_0)-\frac{1}{\alpha}v^Tv\right)$ is $r$.
	Thus $H$ has $r+1$ as a typical rank.
\end{proof}

The characterization of looped graphs with generic completion rank at most two, given below,
is an easy consequence of a result of Gross and Sullivant.

\begin{prop}\label{prop:gcrOfAllLoopedTrees}
	Let $G$ be a looped graph.
	The generic completion rank of $G$ is at most two if and only if $G$ is a looped forest,
	with equality attained if and only if $G$ has at least one non-loop edge.
\end{prop}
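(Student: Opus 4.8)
The plan is to deduce this from the theorem of Gross and Sullivant cited as \cite[Theorem 2.5]{GSMR3706762}, which we quoted in the statement of Theorem \ref{thm:loopedTrees}\eqref{item:gcrAtMostTwo}: a looped graph has generic completion rank at most $2$ if and only if it has no cycles other than loops, i.e.\ the underlying simple graph is a forest. First I would recall that the generic completion rank of a graph with no non-loop edges is exactly $1$: such a $G$-partial matrix is a diagonal matrix with generic (hence nonzero) diagonal entries together with completely free off-diagonal entries, and a generic symmetric matrix with prescribed nonzero diagonal has a rank-one completion (take the outer product $vv^T$ with $v_i = \sqrt{d_i}$ after matching signs — more carefully, one argues as in the proof of Theorem \ref{thm:typicalrank2Graphs} that the off-diagonal entries of a rank-one matrix are determined by the diagonal and so can be filled in). Conversely $\gcr(G) \ge 1$ always, as long as $G$ has a vertex and entries are not all forced to be zero, which holds generically.

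With that in hand the proof is short. If $G$ is a looped forest, then its underlying simple graph is a forest, so it has no non-loop cycles, and \cite[Theorem 2.5]{GSMR3706762} gives $\gcr(G) \le 2$; conversely if $\gcr(G) \le 2$ then by the same theorem $G$ has no non-loop cycle, so the underlying simple graph is a forest and $G$ is a looped forest. For the equality clause: if $G$ has at least one non-loop edge $\{i,j\}$, then the $2\times 2$ principal submatrix on $\{i,j\}$ is a generic fully-specified symmetric matrix $\begin{pmatrix} a & b \\ b & c\end{pmatrix}$ with $ac - b^2 \neq 0$ generically, so $M(\bfx)$ has a rank-$2$ submatrix for every completion $\bfx$ and hence $\gcr(G) \ge 2$; combined with $\gcr(G) \le 2$ this forces $\gcr(G) = 2$. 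If instead $G$ has no non-loop edge, then $G$ is a looped graph with only loop edges, and by the previous paragraph $\gcr(G) = 1 < 2$.

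There is no real obstacle here — the content is entirely in the cited Gross--Sullivant result, and the remaining steps are the elementary observations that looped forests are exactly the looped graphs with no non-loop cycle, that a single non-loop edge forces a rank-$2$ minor, and that a purely looped graph has generic completion rank $1$. The only point requiring a small amount of care is the last one, the rank-one completion of a generic diagonal-plus-free matrix, which I would either dispatch by the argument sketched above or simply cite from the generic completion rank one literature (\cite{singer2010uniqueness}, Proposition \ref{prop:gcr1graphs}), since a graph with only loops trivially satisfies the criterion there (it is free of even cycles and each component, a single looped vertex, has at most one odd cycle).
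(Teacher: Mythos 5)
Your proposal is correct and follows essentially the same route as the paper, whose entire proof is the one-line citation of \cite[Theorem 2.5]{GSMR3706762}. The extra details you supply for the equality clause (a fully specified generic $2\times 2$ principal submatrix forcing rank at least $2$, and the rank-one completion of a purely looped graph via Proposition \ref{prop:gcr1graphs}) are sound and simply make explicit what the paper leaves to the reader.
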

\begin{proof}
    This follows from \cite[Theorem 2.5]{GSMR3706762}.
\end{proof}

Proposition \ref{prop:typicalrankProperties} implies
that the maximum typical rank of a graph with generic completion rank $2$,
which in the looped case we now know to be trees,
is at most $4$.
Lemma \ref{lemma:startree} below tells us that for looped star trees,
this inequality is strict.

\begin{lem}\label{lemma:startree}
	If $G$ is a looped star tree with at least three vertices, then the typical ranks of $G$ are $2$ and $3$.
\end{lem}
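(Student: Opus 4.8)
The plan is to realize $G$ as the result of adding a looped suspension vertex to a very simple graph, and then invoke Lemma~\ref{lem:addingSuspensionVertex}.

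First I would observe that in a looped star tree with at least three vertices, the center vertex $c$ is adjacent to every other vertex and carries a loop, hence is a looped suspension vertex. Deleting it leaves exactly $H := G - c$, which is a disjoint union of $n-1 \ge 2$ looped isolated vertices, i.e.\ $K_1^\circ \sqcup \cdots \sqcup K_1^\circ$ ($n-1$ copies): the leaves keep their loops, and their only incident non-loop edges went to $c$. Thus $G$ is obtained from $H$ by adding a looped suspension vertex.

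Next I would pin down the typical ranks of $H$ exactly as $\{1,2\}$. The generic completion rank of $H$ is $1$, e.g.\ by Proposition~\ref{prop:gcr1graphs}, since each connected component of $H$ is a single loop (one odd cycle, no even cycles); hence by Proposition~\ref{prop:typicalrankProperties} the maximum typical rank of $H$ is at most $2$. That $2$ is in fact typical is immediate: on the (open, full-dimensional) set of $H$-partial matrices where two of the diagonal entries have opposite signs, the corresponding $2\times 2$ principal submatrix is indefinite for every completion, forcing rank at least $2$, and combined with the upper bound the minimum completion rank on this set is exactly $2$. (Alternatively one may simply cite Corollary~\ref{cor:typicalRankKnKm} for the case $n-1 = 2$ and Proposition~\ref{prop:disjointThreeCliques} for $n-1 \ge 3$, both of which give $n_1 + n_2 = 2$ as the maximum typical rank.) So the typical ranks of $H$ are precisely $\{1,2\}$.

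Finally, applying Lemma~\ref{lem:addingSuspensionVertex}, which shifts every typical rank up by $1$ when a looped suspension vertex is added, we conclude that the typical ranks of $G$ are $\{1+1,\, 2+1\} = \{2,3\}$. There is no genuinely hard step here; the only points requiring care are (a) checking that the center is a suspension vertex under the paper's definition (it is, since "adjacent to every other vertex" only concerns the leaves), and (b) using Lemma~\ref{lem:addingSuspensionVertex} as the two-sided statement it is, so that we obtain the exact set $\{2,3\}$ rather than merely the inclusion $\{2,3\} \subseteq$ (typical ranks of $G$). If one prefers not to rely on that bijectivity, the upper bound "maximum typical rank of $G$ is at most $3$" can be obtained directly: Schur-complementing a generic $G$-partial matrix at $c$ writes its rank as $1$ plus the rank of a completion of an $H$-partial matrix, and $H$ has maximum typical rank $2$, while $2$ and $3$ being typical for $G$ follows from $\gcr(G) = 2$ (Proposition~\ref{prop:gcrOfAllLoopedTrees}) together with the "$3$ is typical" half of Lemma~\ref{lem:addingSuspensionVertex} and Proposition~\ref{prop:typicalrankProperties}.
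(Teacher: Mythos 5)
Your proof is correct and follows essentially the same route as the paper: both decompose the looped star tree as a looped suspension vertex added to a disjoint union of looped isolated vertices, establish that the latter has typical ranks $\{1,2\}$, and conclude via Lemma~\ref{lem:addingSuspensionVertex}. The only cosmetic difference is that you verify the maximum typical rank $2$ of the leaf set directly with the indefinite $2\times 2$ principal submatrix argument, whereas the paper cites Theorem~\ref{thm:typicalrank2Graphs} for the same fact.
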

\begin{proof}
	By Proposition \ref{prop:gcr1graphs} and Theorem \ref{thm:typicalrank2Graphs}, we have that the generic completion rank of a union of isolated looped vertices is $1$ and its maximum typical rank is $2$.
	The proposition then follows from Lemma \ref{lem:addingSuspensionVertex}.
\end{proof}

Proposition \ref{prop:maximumtypicalrank3graphs} below handles most of the heavy lifting
in the proof of Theorem \ref{thm:loopedTrees}.
It tells us exactly which looped graphs have $3$ as their maximum typical rank.
Before we can prove that,
we need Proposition \ref{prop:upperBoundForMaxTypicalRank}
which gives an upper bound on the maximum typical rank of a graph
in terms of the maximum size of an independent set of vertices.

\begin{prop}\label{prop:upperBoundForMaxTypicalRank}
	Let $G$ be a looped semisimple graph with $n$ vertices. 
	Let $r$ be the maximum size of an independent set of vertices of $G$.
	Then, the maximum typical rank of $G$ is at most $2+n-r$.
\end{prop}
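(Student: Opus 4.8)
The plan is to exhibit, for a generic $G$-partial matrix $M$, a completion whose rank is at most $2 + n - r$. Let $S \subseteq [n]$ be an independent set of vertices with $|S| = r$; since $S$ is independent, every off-diagonal entry of $M$ in the rows and columns indexed by $S$ is unknown (the diagonal entries there are known, since $G$ is looped). Write the rows and columns so that $S = \{1, \dots, r\}$ and partition $M(\bfx)$ into a $2 \times 2$ block form with upper-left block the $r \times r$ block indexed by $S$ and lower-right block the $(n-r) \times (n-r)$ block indexed by $[n] \setminus S$. The upper-left block is then $\diag(d_1, \dots, d_r)$ plus a symmetric matrix of indeterminates with zero diagonal; the off-diagonal blocks are entirely indeterminate.

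The key idea is to use the Schur complement with respect to the lower-right block. First I would complete the non-$S$ block: the principal submatrix $M_{[n]\setminus S}$ of $M$ is itself a generic partial matrix on a looped graph with $n - r$ vertices, so it can be completed — trivially, since we only need a \emph{bound} — to something of rank at most $n - r$; in fact, being generic and full-rank typical in many cases, we may simply take a generic completion $B$, which is invertible of size $n - r$. Denote by $Y$ the (fully indeterminate) $r \times (n-r)$ off-diagonal block and by $A(\bfx)$ the upper-left block. The rank formula for Schur complements gives
\[
	\rank(M(\bfx)) = \rank(B) + \rank\!\left(A(\bfx) - Y B^{-1} Y^T\right) \le (n-r) + \rank\!\left(A(\bfx) - Y B^{-1} Y^T\right).
\]
So it suffices to choose the remaining free entries (the entries of $Y$ and the off-diagonal entries of $A$) so that $A(\bfx) - Y B^{-1} Y^T$ has rank at most $2$. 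Now $A(\bfx)$ has fully free off-diagonal entries, so $A(\bfx) - Y B^{-1} Y^T$ is a symmetric $r \times r$ matrix whose diagonal is fixed (equal to $d_1, \dots, d_r$ minus the diagonal of $Y B^{-1} Y^T$, which we can still influence through $Y$) and whose off-diagonal entries are completely free. Therefore I would first pick $Y$ generically so that $YB^{-1}Y^T$ has some fixed diagonal $c_1, \dots, c_r$, and then the question reduces to: can a symmetric $r \times r$ matrix with prescribed diagonal $(d_1 - c_1, \dots, d_r - c_r)$ and free off-diagonal entries be chosen to have rank at most $2$? The answer is yes — this is exactly the rank-$\le 2$ completion problem for the complete looped graph $K_r^\circ$ with only the diagonal specified, whose generic completion rank we may invoke: by Proposition~\ref{prop:gcr1graphs} (or directly) the looped complete graph has generic completion rank $2$ once $r \ge 2$ (it is not a forest), and more to the point any diagonal matrix with real entries can be written as a rank-$\le 2$ symmetric matrix plus corrections only on the diagonal — indeed one can realize any prescribed real diagonal as the diagonal of a rank-$2$ symmetric matrix (e.g. using two rank-one summands $uu^T - vv^T$ with $u, v$ chosen so that $u_i^2 - v_i^2$ hits the target, which is always solvable over $\rr$). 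Since we also have the freedom in $Y$ to adjust $(c_1, \dots, c_r)$, there is more than enough room.

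The main obstacle I anticipate is the bookkeeping in the final reduction: one must make sure that choosing $Y$ to fix the diagonal of $YB^{-1}Y^T$ does not conflict with the genericity needed elsewhere, and that the rank-$2$ realization of the resulting diagonal matrix can be achieved using \emph{only} the off-diagonal freedom of $A$ together with whatever diagonal shift $Y$ provides — i.e. that the two sources of freedom ($Y$ and the off-diagonal of $A$) together suffice. A clean way to handle this is to not insist $B$ be invertible but instead to complete the $[n]\setminus S$ block first to its minimal rank and argue by induction on $n - r$, peeling off one non-$S$ vertex at a time; but the Schur-complement computation above is the conceptual core, and the rank-$2$ completability of a diagonally-specified symmetric matrix over $\rr$ is elementary. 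Once those two pieces are in place, the bound $2 + n - r$ follows immediately, and it holds for a generic $M$, hence the maximum typical rank is at most $2 + n - r$.
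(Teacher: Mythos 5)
Your argument is essentially correct and reaches the bound by a different, more hands-on route than the paper. The paper stays combinatorial: the induced subgraph $H$ on the independent set is a looped set of isolated vertices, whose maximum typical rank is at most $2$; adding $n-r$ looped suspension vertices raises all typical ranks by exactly one (Lemma \ref{lem:addingSuspensionVertex}); and since $G$ is a subgraph of the resulting graph $H'$, monotonicity of maximum typical rank under adding edges gives the bound. Your single Schur-complement computation is in effect an unrolled version of this: inverting a generic completion $B$ of the block outside $S$ does the work of the suspension-vertex lemma (whose proof is itself a Schur complement), and the observation that any real diagonal $(e_1,\dots,e_r)$ occurs as the diagonal of a rank-$\le 2$ matrix $uu^T - vv^T$ with $u_i^2 - v_i^2 = e_i$ does the work of bounding the looped independent set. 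Your version buys an explicit completion of rank at most $2+n-r$ without the intermediate lemmas; the paper's version buys a lemma that is reused elsewhere.

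Two points need repair, though neither is fatal. First, your opening claim that every off-diagonal entry in the rows and columns indexed by $S$ is unknown is false: independence of $S$ only makes the entries with \emph{both} indices in $S$ unknown, while entries between $S$ and $[n]\setminus S$ are specified whenever the corresponding edge lies in $G$, so $Y$ is in general not fully indeterminate and you cannot count on any freedom in $Y$ (nor on adjusting $(c_1,\dots,c_r)$ through it). Fortunately your endgame never uses that freedom: fill the unspecified entries of $Y$ arbitrarily, set $c_i := (YB^{-1}Y^T)_{ii}$, and use only the genuinely free off-diagonal entries of the $S$-block to force $A - YB^{-1}Y^T = uu^T - vv^T$ with $u_i^2 - v_i^2 = d_i - c_i$; this is solvable for every real right-hand side, so the values of the $c_i$ are irrelevant. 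Second, the appeal to generic completion rank via Proposition \ref{prop:gcr1graphs} is both misdirected and insufficient: the relevant pattern (diagonal specified, all off-diagonal entries free) is a looped set of isolated vertices, not $K_r^\circ$, and generic or typical rank statements only cover generic diagonals, whereas here the diagonal $d_i - c_i$ is whatever the construction hands you; the elementary $uu^T - vv^T$ argument, valid for every real diagonal, is the one to keep. With these fixes, and noting that for a generic $G$-partial matrix the block outside $S$ does admit an invertible completion (its determinant is not identically zero as a polynomial in the unknowns, since $G$ is looped and the product of the diagonal entries survives), your proof is complete.
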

\begin{proof}
	Let $H$ be the graph obtained from $G$ by removing all vertices not in a particular independent set of vertices of size $r$.
	Proposition \ref{prop:gcrOfAllLoopedTrees} implies that the maximum typical rank of $H$ is at most $2$.
	Let $H'$ be obtained from $H$ by adding $n-r$ looped suspension vertices.
	Lemma \ref{lem:addingSuspensionVertex} implies that the maximum typical rank of $H'$ is at most $2+n-r$.
	Since $G$ is a subgraph of $H'$,
	the maximum typical rank of $G$ is also at most $2+n-r$.
\end{proof}

\begin{prop}\label{prop:maximumtypicalrank3graphs}
	Let $G$ be a looped graph with at least three vertices and one non-loop edge.
	Then the maximum typical rank of $G$ is $3$ if and only if $G$ is a looped triangle,
	or the disjoint union of a looped star tree and a (possibly empty) set of looped isolated vertices.
\end{prop}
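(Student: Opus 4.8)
The plan is to treat the two implications separately; throughout, write $n$ for the number of vertices of $G$, and for a partial matrix $M$ call the least $r$ to which $M$ is completable the \emph{minimum completion rank} of $M$. I will use repeatedly, and justify once via Tarski--Seidenberg (the relevant level sets are semialgebraic), that a full‑dimensional family of partial matrices on which the minimum completion rank is $\ge k$ forces some typical rank of the ambient graph to be $\ge k$.

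For the reverse implication I would first dispose of $G=K_3^\circ$: then every $G$‑partial matrix is fully specified, a generic real symmetric $3\times3$ matrix has rank $3$, so $3$ is the maximum (indeed only) typical rank. Otherwise $G=S\sqcup I$ with $S$ a looped star tree and $I$ a looped independent set, and since $G$ has a non‑loop edge, $S$ has at least two vertices. The leaves of $S$ together with $I$ form an independent set of size $n-1$, so Proposition~\ref{prop:upperBoundForMaxTypicalRank} bounds the maximum typical rank by $2+n-(n-1)=3$. For the matching lower bound I would exhibit $3$ as a typical rank: let $G_0$ be $S$ if $|V(S)|\ge 3$ and $S\sqcup K_1^\circ$ (using $I\neq\emptyset$) otherwise, so $3$ is a typical rank of $G_0$ by Lemma~\ref{lemma:startree} (resp.\ Corollary~\ref{cor:typicalRankKnKm}), and note $G$ is obtained from $G_0$ by repeatedly adjoining isolated looped vertices. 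It then remains to check that adjoining an isolated looped vertex cannot decrease the minimum completion rank on a full‑dimensional family: writing the enlarged partial matrix as a block matrix with a generic nonzero $1\times1$ diagonal block $\alpha$ and a fully unspecified off‑diagonal block, the Schur complement rank formula shows every completion has rank $1+\rank\bigl(M(\bfx)-\alpha^{-1}vv^{T}\bigr)\ge 1+\bigl(\rank M(\bfx)-1\bigr)$ for some completion $M(\bfx)$ of the old part and some real vector $v$. Combined with the upper bound already obtained, this gives that the maximum typical rank of $G$ equals $3$.

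For the forward implication I would argue the contrapositive: assume $G$ (looped, $\ge 3$ vertices, at least one non‑loop edge) is neither $K_3^\circ$ nor of the form (looped star tree)$\,\sqcup\,$(looped isolated vertices); I claim the maximum typical rank is at least $4$. The combinatorial core is that such a $G$ contains, as a spanning subgraph, either $K_2^\circ\sqcup K_2^\circ$ or $K_3^\circ\sqcup K_1^\circ$ with the remaining vertices looped and isolated. If $G$ is a looped forest, its non‑loop edges do not form a single star tree, so either they lie in two different nontrivial tree components (take one edge from each) or they form a single tree of diameter $\ge 3$ (which contains a $P_4$); either way $G$ has two vertex‑disjoint non‑loop edges, giving the first subgraph. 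If $G$ is not a looped forest it has a cycle: if $G$ has a triangle then $n\ge 4$ (else $G=K_3^\circ$) and keeping the triangle plus one further vertex gives the second subgraph, while if $G$ is triangle‑free its shortest cycle has length $\ge 4$ and again contains two vertex‑disjoint edges. Now $K_2^\circ\sqcup K_2^\circ$ and $K_3^\circ\sqcup K_1^\circ$ both have $4$ as a typical rank by Corollary~\ref{cor:typicalRankKnKm}, and the Schur‑complement observation above lets me adjoin the remaining isolated looped vertices while keeping a full‑dimensional family of partial matrices of this spanning subgraph $H$ all of whose completions have rank $\ge 4$. Finally, since completions of a $G$‑partial matrix $M$ are among the completions of its restriction to $E(H)$, pulling this family back along the coordinate projection $\rr^{E(G)}\to\rr^{E(H)}$ produces a full‑dimensional family of $G$‑partial matrices all of whose completions have rank $\ge 4$, so the maximum typical rank of $G$ is $\ge 4>3$.

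The step I expect to be the main obstacle is the combinatorial claim together with its clean interface to the analytic part: making sure the ``spanning subgraph plus isolated looped vertices'' reduction really reaches every bad $G$ (the forest case with a non‑star component, and the various cyclic cases), and that the inference from ``full‑dimensional family with all completions of rank $\ge k$'' to ``typical rank $\ge k$'' is set up once and applied uniformly. By contrast, no control of the \emph{inertia} of completions should be needed, because the only place an exact value is required --- the bound of $3$ in the reverse implication --- is supplied by Proposition~\ref{prop:upperBoundForMaxTypicalRank} rather than by a Schur‑complement construction.
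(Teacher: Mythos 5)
Your proposal is correct and follows essentially the same route as the paper: the upper bound of $3$ comes from Proposition \ref{prop:upperBoundForMaxTypicalRank} applied to the independent set of non-center vertices, the lower bound from Lemma \ref{lemma:startree} together with Corollary \ref{cor:typicalRankKnKm}, and the converse from locating $K_2^\circ\sqcup K_2^\circ$ (or $K_3^\circ\sqcup K_1^\circ$) inside $G$ and invoking monotonicity of typical ranks under adding edges and vertices. If anything, your write-up is a bit more explicit than the paper's --- you justify the monotonicity/Schur-complement step and treat the looped-triangle-plus-isolated-vertices case separately via $K_3^\circ\sqcup K_1^\circ$, whereas the paper handles the non-star cases in one line with disjoint non-loop edges and Proposition \ref{prop:KnKmfullrank}, and uses Proposition \ref{prop:disjointThreeCliques} where you use $K_2^\circ\sqcup K_1^\circ$.
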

\begin{proof}
	Since adding edges and vertices to a graph can only increase its maximum typical rank,
	Proposition \ref{prop:KnKmfullrank} implies that if $G$ has a pair of edges not sharing any vertex,
	then $G$ has four as a typical rank.
	Thus if the maximum typical rank of $G$ is $3$,
	then $G$ is either a triangle, or the disjoint union of a star tree and a (possibly empty) set of isolated vertices.
	It is clear that a triangle has $3$ as a typical rank.
	In the other case, the fact that $G$ has $3$ as a typical rank is implied by 
	Lemma \ref{lemma:startree} when the star tree in $G$ has at least three vertices,
	and by Proposition \ref{prop:disjointThreeCliques} when the star tree in $G$ has two vertices.
	
	The proposition now follows by noting that the triangle cannot have four as a typical rank (it only has three vertices),
	and Proposition \ref{prop:upperBoundForMaxTypicalRank} implies that neither can the disjoint union
	of a star tree and a set of isolated vertices.
\end{proof}

We are now ready to prove the main result of this subsection.

\begin{proof}[Proof of Theorem \ref{thm:loopedTrees}]
	Proposition \ref{prop:gcrOfAllLoopedTrees} is \eqref{item:gcrAtMostTwo}.
	Theorem \ref{thm:fullranktypicalCharacterization} implies \eqref{item:gcrOneLooped}.
	If $G$ has generic completion rank two, then Proposition \ref{prop:typicalrankProperties}
	implies that the maximum typical rank of $G$ is at most $4$.
	In this case, it is clear that if $G$ has two vertices, then $2$ is the maximum typical rank of $G$.
	So assume that $G$ has at least $3$ vertices.
	If $G$ is the union of a looped star tree and a set of looped isolated vertices,
	then Proposition \ref{prop:maximumtypicalrank3graphs} implies that the maximum typical rank of $G$ is $3$.
	In all other cases, $G$ has $K_2^\circ \sqcup K_2^\circ$ as a subgraph.
	Any graph obtained by adding a (possibly empty) set of edges to this subgraph
	has typical rank $4$ by Theorem \ref{thm:fullranktypicalCharacterization}.
	In this case, $G$ has $4$ as its maximum typical rank.
\end{proof}

\section{Open problems}

In this section, we list several open problems that seem like promising next steps for the study of typical
ranks of semisimple graphs.
Theorem \ref{thm:loopedTrees} gives us a relatively complete understanding of the typical rank behavior
of cycle-free looped forests,
so a natural next step is to investigate the typical rank behavior of cycles.

We know that every looped cycle has generic completion rank $3$ \cite[Theorem 2.5]{GSMR3706762}.
Moreover, if $G$ is a looped cycle of length 4 or greater, then Theorem \ref{thm:fullranktypicalCharacterization}
implies that $G$ also has $4$ as a typical rank
since $G$ contains $K_2^\circ\sqcup K_2^\circ$ as a subgraph.
Theorem \ref{thm:loopedTrees} implies that every path has a maximum typical rank of at most $4$,
so since a cycle can be obtained from a path by adding a single suspension vertex
and deleting edges, Lemma \ref{lem:addingSuspensionVertex} implies that no looped cycle can have $6$ as a typical rank.
However, at this point, we do not know whether $5$ is a possible typical rank.
Therefore we ask the following question.

\begin{ques}
    Does there exist a looped cycle with $5$ as its maximum typical rank?
\end{ques}


There is also much left to be done in understanding generic completion ranks.
In particular, we do not even know of a characterization of the semisimple graphs with generic completion rank two.
We therefore also pose the following question,
whose answer is known for looped graphs \cite{GSMR3706762} 
and bipartite graphs \cite{bernstein2017completion}.

\begin{ques}
	Which semisimple graphs have $2$ as their generic completion rank?
\end{ques}

Given looped graphs $G$ and $H$,
the generic completion rank of any clique sum of $G$ and $H$ is
the maximum of the generic completion ranks of $G$ and $H$ \cite[Theorem 1.12]{MR3903791}.
Proposition \ref{prop:esdArgument-fullranktypical} gives us some information about how typical rank behaves
in the context of disjoint unions of full rank typical graphs.
In light of this, we ask the following more general question.

\begin{ques}\label{ques:cliqueSums}
    What are the typical rank of a clique sums of two semisimple graphs?
\end{ques}

The following proposition gives yet another case study for the disjoint union of graphs.
Its proof motivates Question \ref{ques:inertia} which follows.

\begin{prop}\label{prop:two4Cycles}
	The maximum typical rank of the disjoint union of two looped $4$-cycles $C_4^\circ\sqcup C_4^\circ$ is~$4$.	
\end{prop}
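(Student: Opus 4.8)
The plan is to establish two things: (i) \emph{every} $C_4^\circ\sqcup C_4^\circ$-partial matrix can be completed to rank at most $4$, which forces the maximum typical rank to be $\le 4$; and (ii) $4$ is in fact a typical rank. The crux of (i) is the following claim about a single looped $4$-cycle: every $C_4^\circ$-partial matrix $N$ admits a completion of rank $4$ of inertia $(2,2,0)$. To see this, relabel the vertices so that the two non-edges are $\{1,2\}$ and $\{3,4\}$ (recall $C_4=K_{2,2}$, so the two non-edges form a perfect matching), and write a completion in block form as $\begin{pmatrix} P(p) & Y \\ Y^\top & Q(q)\end{pmatrix}$, where $P(p)=\begin{pmatrix} d_1 & p \\ p & d_2\end{pmatrix}$ and $Q(q)=\begin{pmatrix} d_3 & q \\ q & d_4\end{pmatrix}$ carry the unknown entries $p,q$, while $Y=\begin{pmatrix} a_{13} & a_{14} \\ a_{23} & a_{24}\end{pmatrix}$ is fully specified. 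First choose $p$ with $|p|$ large, so $P(p)$ is invertible of inertia $(1,1,0)$; then, with $R:=Y^\top P(p)^{-1}Y$, choose $q$ with $|q|$ large enough that $\det(Q(q)-R)<0$, so $Q(q)-R$ has inertia $(1,1,0)$. By the Haynsworth inertia additivity formula for Schur complements, the resulting completion has inertia $(1,1,0)+(1,1,0)=(2,2,0)$.

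Granting the claim, (i) follows by a Gram-matrix gluing argument. Write a $C_4^\circ\sqcup C_4^\circ$-partial matrix as $M=\begin{pmatrix} M_1 & * \\ * & M_2\end{pmatrix}$ with $M_i$ a $C_4^\circ$-partial matrix, complete each $M_i$ to a rank-$4$ matrix $\tilde M_i$ of inertia $(2,2,0)$, and factor $\tilde M_i=V_i^\top\Sigma V_i$ with $V_i\in\mathbb{R}^{4\times 4}$ and $\Sigma=\diag(1,1,-1,-1)$ (possible precisely because the inertia of $\tilde M_i$ is at most $(2,2)$). Then $\begin{pmatrix} V_1 & V_2\end{pmatrix}^\top\Sigma\begin{pmatrix} V_1 & V_2\end{pmatrix}$ has rank at most $4$ and agrees with $M$ wherever $M$ is specified, since the only constraints are on the two diagonal blocks, which are $\tilde M_1$ and $\tilde M_2$, and all entries between the two components of $C_4^\circ\sqcup C_4^\circ$ are unknown. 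Hence the minimum completion rank of every such $M$ is at most $4$, so no integer $\ge 5$ is a typical rank.

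For (ii) I would invoke Proposition~\ref{prop:esdArgument-fullranktypical}. Since $C_4^\circ$ contains $K_2^\circ\sqcup K_2^\circ$ as a spanning subgraph, Remark~\ref{rmk:addingEdges} together with Theorem~\ref{thm:fullranktypicalCharacterization} (or directly Proposition~\ref{prop:KnKmfullrank}) shows $C_4^\circ$ is full-rank typical, so there is an open set $\mathcal U_0$ of $C_4^\circ$-partial matrices that are minimally completable to rank $4$. For any $N\in\mathcal U_0$, Proposition~\ref{prop:full-rankInertia} says all completions of $N$ share a common inertia, and by the claim above that inertia is $(2,2,0)$. Taking $M_1,M_2\in\mathcal U_0$ gives $\esd(M_1,M_2)=\esd\big((2,2,0),(2,2,0)\big)=0$, so Proposition~\ref{prop:esdArgument-fullranktypical} shows $M=\begin{pmatrix} M_1 & * \\ * & M_2\end{pmatrix}$ is minimally completable to rank $\max\{4,4\}+0=4$; as $M_1,M_2$ range over $\mathcal U_0$ these $M$ form an open set, so $4$ is a typical rank. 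Combining (i) and (ii) gives that the maximum typical rank of $C_4^\circ\sqcup C_4^\circ$ is exactly $4$. I expect the main obstacle to be the inertia claim of step (i): it relies on the non-edges of $C_4^\circ$ being positioned so that, after relabeling, each diagonal block is $2\times 2$ with a single free off-diagonal entry, which is exactly what makes a signature-$(2,2)$ rank-$4$ completion always available and lets the two blocks be glued over a common $\Sigma$.
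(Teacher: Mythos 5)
Your proposal is correct, and while it shares the paper's overall skeleton---pin down the inertia of a $C_4^\circ$-partial matrix that is minimally completable to full rank, then invoke Proposition \ref{prop:esdArgument-fullranktypical}---it proves the key inertia fact by a genuinely different argument. The paper keeps the two unknowns in the off-diagonal block and argues by contradiction: a definite completion is impossible because the third leading principal minor is a quadratic in one unknown with leading coefficient $-a_{22}$, and inertias $(3,1,0)$ and $(1,3,0)$ are impossible because $\det(M(x,y))$ has leading term $x^2y^2$ and is therefore eventually positive; the only surviving inertia is $(2,2,0)$. You instead relabel so the two unknowns sit in the diagonal $2\times 2$ blocks (using that the non-edges of $C_4^\circ$ form a perfect matching) and construct an explicit completion of inertia $(2,2,0)$ by making both unknowns large and applying Haynsworth inertia additivity to the Schur complement; combined with Proposition \ref{prop:full-rankInertia} this gives the same conclusion, and your construction has the added strength of applying to \emph{every} $C_4^\circ$-partial matrix, not only those minimally completable to full rank. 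That extra strength is what powers your step (i): the Gram gluing over $\Sigma=\diag(1,1,-1,-1)$ shows every $C_4^\circ\sqcup C_4^\circ$-partial matrix completes to rank at most $4$, which settles the upper bound uniformly---including the open regions where one or both blocks are minimally completable to rank $3$, a case the paper's appeal to Proposition \ref{prop:esdArgument-fullranktypical} (whose hypothesis requires both blocks to be minimally completable to full rank) covers only implicitly. Your argument that $4$ is attained as a typical rank is essentially the same as the paper's.
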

\begin{proof}
	Let $M(x,y)$ be a $C_4^\circ$-partial matrix, which we write as
	\begin{equation*}
	M(x,y)=\left(\begin{array}{c|c}
	A & \begin{array}{cc}
	x & \mu \\
	\lambda & y	
	\end{array}
	\\
	\hline
	\begin{array}{cc}
	x & \lambda \\
	\mu & y	
	\end{array} & B
	\end{array}\right)	
	\end{equation*}
	where $A=(a_{ij})$ and $B=(b_{ij})$ are $2\times 2$ fully specified matrices,
	$\lambda, \mu$ are specified entries and $x,y$ are the unspecified entries.
	Theorem \ref{thm:fullranktypicalCharacterization} implies that $C_4^\circ$ is full-rank typical.
	We will show that if $M$ is minimally completable to rank $4$,
	then $\In(M)=(2,2,0)$.
	The desired result then follows from Proposition \ref{prop:esdArgument-fullranktypical}.
	
	We claim that a minimum rank completion of $M$ cannot be definite.
	For the sake of contraction,
	assume without loss of generality that a minimum rank completion of $M$ is positive definite.
	Then, for any $x,y$, the third leading principal minor of $M(x,y)$ must be positive.
	This minor is a quadratic polynomial in $x$ with leading coefficient $-a_{22}$.
	Since $M(x,y)$ is positive definite, $a_{22}>0$.
	But then for large $x$, the third leading principal minor becomes negative,
	thus contradicting that $M(x,y)$ is positive definite.
	
	Now, for the sake of contradiction, assume $M$ is minimally completable to full rank,
	and that $\In(M)=(3,1,0)$.
	Then, $\det(M(x,y))$ is negative for any completion of $M$. 
	Note that $\det(M)$ is a degree $4$ polynomial which has leading term $x^2y^2$.
	Thus, for large $x$ and $y$, it is clear that $\det(M(x,y))> 0$.
	This implies that $\In(M(x,y))$ cannot be $(3,1,0)$ nor $(1,3,0)$
	since that would imply $\det(M(x,y))<0$.
	The only remaining possibility is $\In(M(x,y)) = (2,2,0)$.
\end{proof}

Recall from Proposition \ref{prop:full-rankInertia}
that if $G$ is full-rank typical and $M$ is a $G$-partial matrix that is minimally completable to full rank,
then all completions of $M$ have the same inertia,
which we denote $\In(M)$.
The proof of Proposition \ref{prop:two4Cycles} suggests that
for the purposes of determining the maximum typical rank of a disjoint union of full-rank typical graphs,
it could be helpful to characterize the possible values of
of $\In(M)$ as $M$ ranges over all $G$-partial matrices that are minimally completable to full rank.
Thus we pose the following question.

\begin{ques}\label{ques:inertia}
	Given a full-rank typical graph $G$,
	what are the possible values of $\In(M)$
	as $M$ ranges over all $G$-partial matrices that are minimally completable to full rank?
\end{ques}

Given a full-rank typical $G$,
if $G^c$ has a proper two-coloring with color classes of size $m$ and $n$,
then there exists a generic $G$-partial matrix $M$
that is minimally completable to full rank and has $\In(M) = (m,n,0)$.
To see this, note that in this case $G$ has $K_m^\circ \sqcup K_n^\circ$ as a subgraph,
and so Proposition \ref{prop:KnKmfullrank} implies that
if in a $G$-partial matrix $M$,
the entries corresponding to the edges of the $K_m^\circ$ form a positive definite matrix
and the entries corresponding to the edges of the $K_n^\circ$ form a negative definite matrix,
then $M$ is minimally completable to full rank and $\In(M) = (m,n,0)$.
Since we were unable to find a full rank typical graph $G$
and $G$-partial matrix $M$ whose inertia did not correspond to a two-coloring of $G^c$ in this way,
we make the following conjecture.

\begin{conj}
	Let $G$ be a full-rank typical graph.
	Then there exists a $G$-partial matrix $M$, minimally completable to full rank,
	such that $\In(M) = (m,n,0)$ if and only if there exists a proper bicoloring
	of $G^c$ with $m$ red vertices and $n$ blue vertices.
\end{conj}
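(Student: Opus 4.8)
This is already the content of the paragraph preceding the statement: if $G^c$ has a proper bicoloring into classes of sizes $m$ and $n$, then $G$ contains $K_m^\circ\sqcup K_n^\circ$, and filling those two cliques with a positive definite and a negative definite matrix yields, by Proposition~\ref{prop:KnKmfullrank}, a generic $G$-partial matrix $M$ minimally completable to full rank with $\In(M)=(m,n,0)$. So the plan is to prove the converse: every inertia $\In(M)$ of a minimally-completable-to-full-rank generic $G$-partial matrix is the split of some proper bicoloring of $G^c$.

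\textbf{Reduction and the eigenvalue-removal coloring.} First I would eliminate the isolated vertices of $G^c$. Let $I\subseteq V$ be the set of vertices isolated in $G^c$ (equivalently, the looped suspension vertices of $G$). For generic $M$ the fully specified principal block $M_I$ is invertible, so by inertia additivity for Schur complements $\In(M)=\In(M_I)+\In(N_0)$, where $N_0$ is a generic $G_0$-partial matrix for $G_0:=G|_{V\setminus I}$; moreover $M$ is minimally completable to full rank if and only if $N_0$ is, and the non-edge count is unchanged. Since $p_{M_I}+n_{M_I}=|I|$ and each isolated vertex of $G^c$ may be colored arbitrarily, the claim for $G$ follows from the claim for $G_0$, whose complement has no isolated vertices; so assume henceforth that $G^c$ has no isolated vertices. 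Now fix a generic completion $M(\bfx_0)$ and write $\In(M)=(m,n,0)$. For each $v\in V$, Cauchy interlacing forces $\In(M_{v,v})$ to be $(m-1,n,0)$ or $(m,n-1,0)$; color $v$ \emph{red} in the first case and \emph{blue} in the second. This is well defined because every $v$ lies in some non-edge of $G$, so Lemma~\ref{lem:recursionToTestFullRank} gives $\sign(M_{v,v})\neq 0$, i.e.\ $M_{v,v}$ is itself minimally completable to full rank and hence has a completion-independent inertia by Proposition~\ref{prop:full-rankInertia}.

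\textbf{The coloring is proper; the obstacle is its split.} Since the sign of the determinant of a full-rank symmetric matrix is $(-1)$ raised to its number of negative eigenvalues, $\sign(M_{v,v})$ records the parity of $n$ or of $n-1$ according as $v$ is red or blue, so the ``nonzero and opposite'' conclusion of Lemma~\ref{lem:recursionToTestFullRank} at a non-edge $\{i,j\}$ of $G$ says exactly that $c(i)\neq c(j)$. Thus $c$ is a proper bicoloring of $G^c$, and everything reduces to showing that its red class has precisely $m$ vertices. I would attempt this by induction on the number of non-edges of $G$: choosing a non-edge $\{i,j\}$, Lemma~\ref{lem:recursionToTestFullRank} gives $\{\In(M_{i,i}),\In(M_{j,j})\}=\{(m-1,n,0),(m,n-1,0)\}$ (and intersecting the interlacing constraints also pins $\In(M_{ij,ij})=(m-1,n-1,0)$), while the inductive hypothesis applied to the full-rank typical graphs $G-i$ and $G-j$ (each re-reduced by the previous step) makes $(m-1,n)$ and $(m,n-1)$ bicoloring splits of $G^c-i$ and $G^c-j$ respectively. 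The main obstacle --- and the reason this is posed only as a conjecture --- is that these conditions do not by themselves force $(m,n)$ to be a split of $G^c$: already when $G^c$ is a path on four vertices they are consistent with each of $(3,1,0)$, $(2,2,0)$, $(1,3,0)$, whereas only $(2,2)$ is a bicoloring split, and excluding the other two requires inspecting the highest-degree term of $\det(M(\bfx))$ in the unknown entries exactly as in the proof of Proposition~\ref{prop:two4Cycles}. So a complete proof seems to need, on top of this recursion, a combinatorial description of the leading behavior of $\det(M(\bfx))$ --- in effect an answer to Question~\ref{ques:inertia} --- and I would expect the special case where $G^c$ is a disjoint union of star trees (so that deleting a leaf merely isolates it and deleting a center shatters its component into isolated vertices) to be provable by the induction alone, as a natural first step.
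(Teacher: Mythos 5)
The statement you were given is stated in the paper as a \emph{conjecture}: the paper proves only the ``if'' direction, in the paragraph immediately preceding it, and it does so exactly as you do (a proper bicoloring of $G^c$ with classes of sizes $m,n$ gives $K_m^\circ\sqcup K_n^\circ\subseteq G$, and Proposition~\ref{prop:KnKmfullrank} produces $M$ with $\In(M)=(m,n,0)$). So there is no proof of the converse in the paper to compare against, and you are right not to claim one. Your partial progress on the converse is sound as far as it goes and goes somewhat beyond what the paper records: the reduction killing the isolated vertices of $G^c$ by a Schur complement (legitimate, since the rows and columns indexed by the looped suspension vertices are fully specified, so the unknown entries pass to the complement unchanged up to a known affine shift), and the observation that the coloring $c(v)$ defined by whether $\In(M_{v,v})$ equals $(m-1,n,0)$ or $(m,n-1,0)$ is well defined (interlacing plus Proposition~\ref{prop:full-rankInertia}) and proper on $G^c$ (the ``nonzero and opposite'' conclusion of Lemma~\ref{lem:recursionToTestFullRank} at each non-edge) are both correct and are a genuine structural step.

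The gap is the one you name yourself, and it is exactly the open content of the conjecture: properness of $c$ shows that \emph{some} pair, namely $(|R|,|B|)$, is a bicoloring split of $G^c$, but nothing in the interlacing/deletion recursion ties $(m,n)$ to $(|R|,|B|)$; your path-on-four-vertices example correctly illustrates that the local constraints are compatible with inertias such as $(3,1,0)$ that the conjecture forbids, and excluding them seems to require global information about $\det(M(\bfx))$ of the kind used ad hoc in Proposition~\ref{prop:two4Cycles} --- in effect an answer to Question~\ref{ques:inertia}. Two smaller caveats: Lemma~\ref{lem:recursionToTestFullRank} is stated for \emph{generic} $M$, while the conjecture quantifies over arbitrary $M$ minimally completable to full rank, so even your properness step needs a perturbation or genericity argument to be complete; and your proposed first target, $G^c$ a disjoint union of star trees, is indeed the natural one --- the paper's source contains a commented-out attempt at precisely that special case by a similar edge-deletion induction --- but the authors evidently did not consider it settled, and the general statement remains open.
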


\section*{Acknowledgments}

We thank the NSF-supported (DMS-1439786) Institute for Computational and Experimental Research in Mathematics (ICERM) in Providence, Rhode Island, for supporting all three authors during the Fall 2018 semester.
We also thank Rainer Sinn for his help organizing the matrix completion working group at ICERM
where this paper began.
Daniel Irving Bernstein was supported by an NSF Mathematical Sciences Postdoctoral Research Fellowship (DMS-1802902).
Grigoriy Blekherman was partially supported by NSF grants DMS-1352073 and DMS-1901950.	
Kisun Lee was partially supported by NSF grant DMS-1719968. 
	
\bibliographystyle{plain}

\end{document}